\numberwithin{equation}{section}
\newtheorem{lemma}{Lemma}[section] 
\newtheorem{theorem}[lemma]{Theorem}
\newtheorem{definition}[lemma]{Definition}
\newtheorem{proposition}[lemma]{Proposition}
\newtheorem*{rep@theorem}{\rep@title}
\newcommand{\newreptheorem}[2]{%
\newenvironment{rep#1}[1]{%
 \def\rep@title{#2 \ref{##1}}%
 \begin{rep@theorem}}%
 {\end{rep@theorem}}}
\newcommand{\R}{\mathbb{R}} 
\def \Rn{{\R^n}}
\newcommand{\hid}{m}
\def \Rnhi{{\R^{n\hid}}}
\newcommand{\e}{\varepsilon}
\newcommand{\N}{\mathbb{N}}
\def \E{\mathbb E}
\def \s{\mathbb{S}^{n-1}}
\def \S{\mathbb{S}^{n\hid-1}}
\def \conv{\operatorname{conv}}
\newcommand \B[1][n]{B_2^{#1}}
\newcommand{\vol}[1][n]{\operatorname{vol}_{#1}}
\newcommand{\Vol}{\operatorname{vol}_{nm}}
\def \s{\mathbb{S}^{n-1}}
\def \S{\mathbb{S}^{nm-1}}
\newcommand {\conbod}[1][nm] {\mathcal{K}^{#1}}
\newcommand {\conbodo}[1][nm] {\mathcal{K}^{#1}_o}
\newcommand {\conbodio}[1][nm] {\mathcal{K}^{#1}_{(o)}}
\newcommand{\PP}[1][Q]{\Pi_{#1,p}^{\circ}}
\newcommand{\D}{\Delta_{Q,p}^\mathcal{A}}
\renewcommand{\E}{\mathcal{E}_{Q,1}}
\newcommand{\Ep}{\mathcal{E}_{Q,p}}
\renewcommand{\L}{\lambda_{1,p}^\mathcal{A}(Q,\Omega)}
\newcommand{\Lq}{\lambda_{1,p}^{\mathcal{A},q}(Q,\Omega)}
\newcommand{\Rq}{\mathcal{R}^\mathcal{A}_{Q,p}}
\newcommand{\Rqq}{\mathcal{R}^\mathcal{A}_{Q,p,q}}
\newcommand{\Lp}{\|h_Q(\theta^t \nabla f(\cdot))\|_{L^p(\R^n)}}
\title[The $m$th-order Affine $p$-Laplace Operator and Applications]{Affine isoperimetric inequalities for the first eigenvalue of the $m$-th order Affine $p$-Laplace Operator}
\author[D. Langharst]{Dylan Langharst}
\address{Department of Mathematical Sciences, Carnegie Mellon University, Pittsburgh, PA 15213, USA}
\email{dlanghar@andrew.cmu.edu}
\author[M. Roysdon]{Michael Roysdon}
\address{Department of Mathematical Sciences, University of Cincinnati, Cincinnati, OH 45221, USA.}
\email{roysdoml@ucmail.uc.edu}
\subjclass{Primary: 35P30, 35B09 Secondary: 52A20}
\keywords{Affine Laplacian, affine Sobolev inequality, {P}\'olya-{S}zeg\"o principle, affine Faber-Krahn inequality}
\thanks{D.L. is supported by US NSF grant DMS-2502744}
\thanks{M.R. is supported by US NSF Grant DMS-MS-2548742 (formerly US NSF DMS-2452384).}
\begin{document}
\begin{abstract}
Recently, Haddad, Jim\'enez, and Montenegro introduced the affine $p$-Laplace operator, $p>1$, and studied associated affine versions of the isoperimetric inequalities for the first eigenvalue of the affine $p$-Laplace operator, including the affine Faber-Krahn inequality and affine Talenti inequality. In this work, we introduce the $m$th-order $p$-Laplace operator $\D f$, which recovers the affine $p$-Laplace operator when $m=1$ and $Q$ is a symmetric interval.

Given $n,m \in \N$, a sufficiently smooth convex body $Q \subset \R^m$, a bounded, open set $\Omega \subset \R^n$ and $p >1$, we investigate the eigenvalue problem 
\[\begin{cases}
\D f = \L |f|^{p-2} f &\text{ in } \Omega;
\\
f=0 & \text{ on } \partial \Omega,
\end{cases}
\]
for $f \in W^{1,p}_0(\Omega)$. Finally, we establish $m$th-order extensions of the affine Talenti inequality and affine Faber-Krahn inequality, which, upon choosing $m=1$, yield new {\bf asymmetric} versions of those aforementioned inequalities.
\end{abstract}

\maketitle

\section{Introduction}
Over the past 50 years, there has been great interest in establishing affine-invariant versions of inequalities appearing in geometry and analysis that are far stronger than their classical counterparts. Two in particular are the affine Sobolev inequality and affine P\'olya-Szeg\"o principle.
In the seminal works \cite{GZ99,LYZ02,TW12}, the following \textit{affine $L^p$ Sobolev inequality} was shown: for $1\leq p <n$
\begin{equation}\label{eq 8.25.5}
	a_{n,p} \|f\|_{L^{\frac{np}{n-p}}(\Rn)}\leq \mathcal{E}_p(f).
\end{equation}
Here,
\begin{equation} \label{eq:sobolev_cons}
a_{n,p}=n^\frac{1}{p}\left(\frac{n-p}{p-1}\right)^\frac{p-1}{p}\left[\frac{\omega_n}{\Gamma(n)}\Gamma\left(\frac{n}{p}\right)\Gamma\left(n+1-\frac{n}{p}\right)\right]^\frac{1}{n}, \quad a_{n,1}=\lim_{p\to 1^+}a_{n,p},
\end{equation}
is the sharp Aubin-Talenti $L^p$ Sobolev constant and $\mathcal{E}_p(f)$ is the affine $L^p$ Sobolev energy of $f$, where $f$ belongs to $W_0^{1,p}(\R^n)$ for $p>1$ and $\operatorname{BV}(\R^n)$ for $p=1$. The equality conditions are also explicit. We suppress the definition of $\mathcal{E}_p(f)$; it can be found in \cite{LYZ02}, or, by substituting $m=1$ and $Q=[-1/2,1/2]$ in \eqref{eq:m_lp_affine_sobolev} below. However, we note that $\mathcal{E}_p(f) \leq \||\nabla f|\|_{L^p(\R^n)}$, and therefore \eqref{eq 8.25.5} implies the sharp Euclidean $L^p$ Sobolev inequality. Of course, we are using the usual notation for $L^p$ norms: letting $\Omega\subseteq \R^n$ be a Borel set, we have
\[
\|f\|_{L^{p}(\Omega)} = \left(\int_\Omega f(x)^pdx\right)^\frac{1}{p} \quad \text{and} \quad \||\nabla f|\|_{L^{p}(\Omega)} = \left(\int_\Omega |\nabla f(x)|^pdx\right)^\frac{1}{p}.
\]

Shortly thereafter, an affine {P}\'olya-{S}zeg\"o principle for the $L^p$ affine energy was established by Lutwak, Yang, and Zhang \cite{LYZ02} and Cianchi, Lutwak, Yang, and Zhang \cite{CLYZ09}:
\begin{equation}\label{eq 8.25.6}
	\mathcal{E}_p(f)\geq \mathcal{E}_p(f^\star) = \||\nabla f^\star|\|_{L^p(\R^n)}, \qquad p\geq 1,
\end{equation}
where $f^\star$ is the spherically symmetric rearrangement of $f$. See Section \ref{s:prelim} for its precise definition. An asymmetric (and even stronger) version of \eqref{eq 8.25.5} is due to Haberl, Schuster, and Xiao \cite{HSX12}. The characterization of equality for $p>1$ was found in Nguyen \cite{NVH16}. Like in the classical case, the inequalities leave the realm of Sobolev spaces and extend to functions of bounded variation, $\operatorname{BV}(\R^n)$, when $p=1$; Wang \cite{TW13} established the validity of \eqref{eq 8.25.5}, as well as the case of equality, in this instance.

The powerful affine $L^p$ Sobolev inequality and affine {P}\'olya-{S}zeg\"o principle became foundational tools in a burgeoning theory of affine PDEs, as initiated by Haddad, Jim\'enez and Montenegro \cite{HJM21}, and further explored in the sequel works \cite{HX22,LM24,LM24_2,LM24_3,HM25}. Recently, the named authors, working with Haddad, Putterman and Ye, established an $m$th-order affine $L^p$ Sobolev inequality in \cite{HLPRY25,HLPRY25_2}, and then the associated {P}\'olya-{S}zeg\"o principle with Zhao \cite{LRZ24}. With the foundational tools having been established, the next natural step is to consider $m$th-order affine PDEs, which include the aforementioned works when $m=1$.

To give some background, we recall the classical Euclidean case. The original $L^p$ Poincar\'e inequality \cite{HP90}, shown by Poincar\'e himself, states that, if $\Omega\subset \R^n$ is a bounded, open set, then for any $p\geq 1$ and $f \in C_0^\infty(\Omega)$, there exists $C(\Omega)>0$ such that
\begin{equation}
\label{eq:weak_poincare}
C(\Omega) \leq \mathcal{R}_p f,
\end{equation}
where $ \mathcal{R}_p f$ is the \textit{Rayleigh quotient} of $f$, given by
\[
 \mathcal{R}_p f: = \frac{\int_\Omega|\nabla f|^p dx}{\int_\Omega |f|^p dx}.
\]
The inequality \eqref{eq:weak_poincare} readily extends to $W_0^{1,p}(\Omega)$, which is the completion of $C_0^\infty(\Omega)$ with respect to the norm 
\[
\|f\|_{W_0^{1,p}(\Omega)}=\||\nabla f|\|_{L^p(\Omega)}.
\]
The optimal constant in \eqref{eq:weak_poincare} is the so-called \textit{Poincar\'e constant}:
\begin{equation}
\label{eq:Euclidean_poincare_constant}
    \lambda_{1,p}(\Omega):= \inf\left\{\mathcal{R}_p f \colon f \in W^{1,p}_0(\Omega) \setminus \{0\} \right\}.
\end{equation}

The fact that the infimum is obtained at some function $f_p \in W_0^{1,p}(\Omega)$ is easy to see, as $W_0^{1,p}(\Omega)$ embeds compactly in $L^p(\Omega)$. More surprising (to us) is the fact that the Poincar\'e constant is the first eigenvalue of the $p$-Laplacian operator 
\[
\Delta_p f := -\operatorname{div}(|\nabla f|^{p-2}\nabla f),
\]
and, therefore, $f_p$ is a bounded first eigenfunction of $\Delta_p f$ on $\Omega$, i.e. it solves
\begin{equation}
\label{eq:euclidean__PDE}
\begin{cases}
\Delta_p f = \lambda_{1,p}(\Omega)|f|^{p-2}f &\text{in } \Omega,\\
f = 0 &\text{on } \partial \Omega.
\end{cases}
\end{equation}
Additionally, the function $f_p$ can be assumed to be positive (see e.g. \cite{PS04}) and is in $C^{1,\alpha}(\Omega)$ for some $\alpha$. If $\Omega$ is smooth, then this regularity extends up to the boundary, i.e. $f\in C^{1,\alpha}(\overline\Omega)$; see \cite{KU77, Tolksdorf, LG88}. When $p=1$, one should replace $W_0^{1,1}(\Omega)$ with $\operatorname{BV}(\Omega)$, the functions of bounded variation on $\Omega$, as it is in this larger space that the infimum is obtained by $f_1\in \operatorname{BV}(\Omega)$; see \cite{KS07} and the references therein.

We mention now two inequalities associated with the operator $\Delta_p$. Let $f\in W_0^{1,p}(\R^n)$. The superlevel set of $f$ on $\Omega$ at level $t>0$ is 
\[
\Omega(t,f)=\{x\in\Omega: |f(x)| >t\}.
\]
Then, the \emph{distribution function} of $f$ is precisely
\begin{equation}
\label{eq:distribution_function}
\mu_f(t)=\vol[n]\left(\Omega(t,f)\right),\end{equation}
which is finite for $t>0$, since $f\in L^p(\R^n)$.
The $L^p$ Talenti inequality from \cite{GT79} yields an inequality for the distribution function of $f_p$, the unique solution to \eqref{eq:euclidean__PDE}, when $1<p<n$. Letting $\mu_p(t):=\mu_{f_p}(t)$, it says that
\begin{equation}
    \left(n\omega_n^\frac{1}{n}(\mu_p(t))^\frac{n-1}{n}\right)^\frac{p}{p-1} \leq - \mu_p^\prime(t)\left(\lambda_{1,p}(\Omega)\left(\frac{\mu_p(t)}{t^{1-p}}+(p-1)\int_t^{+\infty}\frac{\mu_p(\tau)}{\tau^{2-p}}d\tau\right)\right)^\frac{1}{p-1}.
    \label{eq:Talenti}
\end{equation}

The second inequality concerns the Poincar\'e constant. The first eigenvalue of the Laplacian satisfies the following isoperimetric inequality, the so-called Faber-Krahn inequality: let $n\geq 2$ and let $\Omega\subset  \R^n$ be a bounded, open set. Then, for every $p\geq 1$,
\begin{equation}
\label{eq:FK_og}
\lambda_{1,p}(\Omega) \geq \lambda_{1,p}(B_\Omega),
\end{equation}
where $B_\Omega$ is the centered Euclidean ball with the same volume (Lebesgue measure) as $\Omega$. There is equality if and only if $\Omega$ is a translate of $B_\Omega$. See \cite{EK25} for $p=2$, \cite{AFT98} for $p>1$ and \cite{FMP09} for $p=1$.

Now that the classical background has been recalled, let us discuss the affine $L^p$ Laplacian $\Delta^\mathcal{A}_p$ introduced by Haddad, Jim\'enez and Montenegro, \cite{HJM21}; we remark that the case $p=2$ had been previously considered by Schindler and Tintarev \cite{ST18}.  We omit the precise formulations of their definitions and results; it will be clear below that their setting is recovered as a special case of that considered herein (with $m=1$ and $Q=[-\frac{1}{2},\frac{1}{2}]$ in Definition~\ref{d:D} and the results thereafter). The main philosophical advantage that the operator $\Delta^\mathcal{A}_p$ has over the usual Laplacian is its affine invariance: for $T\in \operatorname{SL}(n)$ and $f\in W_0^{1,p}(\Omega)$,
\[
\Delta_p^\mathcal{A} (f\circ T) = (\Delta_p^\mathcal{A} f)\circ T, \quad \text{on } \; T^{-1}(\Omega).
\]
Nevertheless, $\Delta_p^\mathcal{A}$ agrees with the usual Laplacian $\Delta_p$ when applied to radial functions $p\geq 1$. Beyond the definition, Haddad, Jim\'enez and Montenegro \cite{HJM21} had established, in an impressive \textit{tour de force}, analogues of \eqref{eq:weak_poincare} and \eqref{eq:FK_og}; they showed that the affine version of the PDE \eqref{eq:euclidean__PDE} admits a solution that, furthermore, obtains the infimum in the affine version of the Poinca\'re constant \eqref{eq:Euclidean_poincare_constant}. Later, Haddad and Xiao \cite{HX22} continued this study, establishing the affine version of the Talenti inequality \eqref{eq:Talenti}.

We now delve into the $m$th-order framework which will setup our problem. Recall that a set $Q\subset\R^m$ is said to be a \textit{convex body} if it is compact, convex and has non-empty interior. We will use three collections of convex bodies $\conbod[m] \supseteq \conbodo[m]\supseteq \conbodio[m]$, which are respectively, the collection of convex bodies in $\R^m$, those that additionally contain the origin, and those that additionally contain the origin in their interiors. 

The $m$th-order Brunn-Minkowski theory initiated in \cite{HLPRY25,HLPRY25_2,LPRY25,LX24,LRZ24} incorporates a convex body $Q$ into operators from the $L^p$ Brunn-Minkowski theory introduced by Lutwak \cite{LE93,LE96} and Lutwak, Yang and Zhang \cite{LYZ00,LYZ02,LYZ04,LYZ04_2}. These operators, which in the $m=1$ case would produce convex bodies in $\R^n$ from those in $\R^n$, now produce convex bodies in $\R^{nm}$ from those in $\R^n$. Specifically, when $m=1$, many of the operators contain $|\langle \theta,u \rangle|$, where $|\cdot|$ is the absolute value and $\langle \theta,u\rangle=\theta^tu$ is the (Euclidean) inner-product of two vectors $\theta,u\in \s$. Recalling that the support function of $K\in\conbod[n]$ is $h_K(u)=\sup_{y\in K}\langle y,u\rangle$, we observe that 
\[
|\langle \theta,u \rangle| = h_{[-1,1]}(\theta^tu).
\]
Then, we replace $[-1,1]$ with an $m$-dimensional convex  body $Q\in\conbodo[m]$, and $\theta$ becomes an element of $(\R^n)^m=\R^{nm}$, i.e. we obtain terms of the form $h_Q(\theta^tu)$, where now $\theta$ is viewed as a $n\times m$ matrix whose column vectors are $\theta_i\in\R^n,$ $i=1,\dots,m$. Note that we are suppressing the dependence on $Q$ (and later $p$ as well) in the term ``$m$th-order'' for expositional purposes.

Returning to the $m=1$ case, we remark that Haberl and Schuster in \cite{HS09,HS2009,HJS21} saw this case to completion (with respect to affine $L^p$ inequalities of the geometric-and-Sobolev-type) by replacing $|\langle \theta,u \rangle|$ with $(1-\tau)\langle \theta,u \rangle_-+\tau\langle \theta,u \rangle_+$, where $\tau \in [0,1]$. Here, for $a\in\R$, we have as usual $a_{\pm}= \max\{0,\pm a\}$. In our notation, this is precisely, up to dilation, $Q=[-\alpha,\beta]$ for $\alpha,\beta>0$. Inequalities from convex geometry concerning operators generated by these type of $Q$ are said to be \textit{asymmetric}. For those not interested in $m>1$, our results also serve to complete the $m=1$ case of Haddad, Jim\'enez and Montenegro \cite{HJM21}, as our setting includes Haberl and Schuster's asymmetric operators as well.  

We are now in a position to introduce the $m$th-order affine Sobolev energy $\mathcal{E}_{Q,p}$: for $f \in W_0^{1,p}(\R^n)$ (or in $\operatorname{BV}(\R^n)$ if $p=1$), and  $Q \in \conbodo[m]$, set
\begin{equation}
\label{eq:m_lp_affine_sobolev}
\Ep f = d_{n,p}(Q) \left( \int_{\S}\|h_Q (\theta^t\nabla f(\cdot))\|_{L^p(\R^n)}^{-nm}d\theta \right)^{- \frac{1}{nm}},
\end{equation}
where 
\[
d_{n,p}(Q) = (n\omega_n)^{\frac{1}{p}} (nm \Vol(\Pi_{Q,p}^\circ B_2^n))^{\frac{1}{nm}}.
\]
The $m$th-order polar projection operator $\Pi_{Q,p}^\circ$ is one of the aforementioned operators that maps a convex body in $\R^n$ to one in $\R^{nm}$. The $m$th-order polar projection body of the unit Euclidean ball $B_2^n$, $\Pi_{Q,p}^\circ B_2^n \in \conbodo[nm]$, is such a convex body. See Definition~\ref{d:generalprojectionbody} below.  We recall that
\[
\|h_Q (\theta^t\nabla f(\cdot))\|_{L^p(\R^n)} = \left(\int_{\R^n}h_Q (\theta^t\nabla f(x))^pdx\right)^\frac{1}{p}.
\]
When $p=1$ and $f$ belongs to $\operatorname{BV}(\R^n)$ instead of $W_0^{1,p}(\R^n)$, the quantity $\|h_Q (\theta^t\nabla f(\cdot))\|_{L^p(\R^n)}$ should be understood as
\[
\int_{\R^n}h_Q (\theta^t\eta_f(x))d|Df|(x),
\]
where $|Df|$ is the variation measure of $f$ and $\eta_f$ is the Radon-Nikodym derivative of $Df$ with respect to $|Df|$, and where $Df$ is a vector-valued measure depending on $f$ whose precise definition is not necessary for our investigations herein. Like in the $m=1$ case, $\Ep f$ satisfies $\Ep f \leq \||\nabla f|\|_{L^p(\R^n)}$ (see \cite[Theorem 1.2]{LRZ24}).

Our main focus is the following minimization problem.

\begin{definition} Fix $m,n\in\N$, $p \geq 1$ and $Q\in\conbodo[m]$. For a bounded, open set $\Omega \subset \R^n$, define
\begin{equation}\label{e:poincareconst}
\L = \inf\left\{\Rq f \colon f \in W^{1,p}_0(\Omega) \setminus \{0\} \right\},
\end{equation}
where $W^{1,p}_0(\Omega)$ should be $\operatorname{BV}(\Omega)$ if $p=1$ and $\Rq f$ denotes the affine $m$th-order Rayleigh quotient given by 
\[
\Rq f = \frac{(\Ep f)^p}{\|f\|_{L^p(\Omega)}^p}.
\]
    
\end{definition}

To establish the fact that $\L$ is non-trivial (i.e., that it is greater than zero and achieved by some function), the first step is to proceed as Poincar\'e himself, and establish a Poincar\'e-type inequality for the affine $m$th-order energy functional $\Ep$ for \textit{some} constant. This was already done in \cite[Theorem 5.1]{LRZ24} (see Lemma~\ref{l:useful}(ii) below), and so we may continue to the next step.

Following \cite{BFK03}, given $K\in\conbodio[n]$, its Wulff $p$-Laplacian is given by 
\[
\Delta_{p,K}f(x) = -\text{div}\left(\nabla\left(\frac{1}{p} h_K^p\right)\left(\nabla f(x)\right)\right).
\]
In \eqref{eq:L_body} below, we introduce a particular convex body $L_{Q,p,f}$ depending on $f$. With this choice, we are now prepared for the following definition.

\begin{definition}
\label{d:D}
Given $p >1$, a bounded, open set $\Omega \subset \R^n$, and $Q\in\conbodo[m]$, we define the affine $m$th-order $p$-Laplace operator of $f \in W^{1,p}_0(\Omega)$ by 
\begin{equation}\label{e:laplacianidentity}
\D f = \Delta_{p,L_{Q,p,f}} f=-\operatorname{div}(h_{L_{Q,p,f}}^{p-1}(\nabla f) \nabla h_{L_{Q,p,f}}(\nabla f)).
\end{equation}
\end{definition}

Section~\ref{s:prelim} is dedicated to some background facts from convex geometry and the theory of PDEs. Section~\ref{s:PDEs} then discusses the operator $\D$ introduced in Definition~\ref{d:D}. Our first main result, which we prove in Section~\ref{sec:regality}, is the following relation between $\D$ and $\L$. Consider the following eigenvalue problem:
\begin{equation}\label{e:weak}
\begin{cases}
\D f = \L |f|^{p-2} f &\text{ in } \Omega;
\\
f=0 & \text{ on } \partial \Omega.
\end{cases}
\end{equation}
We require the following class of convex bodies; these regularity assumptions on our convex bodies $Q$ allow us to differentiate when necessary:
\[
\mathcal{C}^{n,m} = \{Q \in \conbodio[m] \colon w(y,\cdot) = h_Q(y^t\cdot) \in C^2(\R^n \setminus \{0\}) \text{ for each } y \in \R^{nm}\}.
\]
Note that this class was essentially used both in \cite{HS09,HJM21} for the case $m=1$, where $Q$ is an interval (see \cite[Lemma~4.1]{HS09}).
\begin{theorem}\label{t:operatoranalysis} Fix $m,n\in\N$, $Q\in\mathcal{C}^{n,m}$ and $p>1$. Let $\Omega \subset \R^n$ be a bounded, open set. Then, $f_p \in W_0^{1,p}(\Omega)$ is an eigenfunction of $\D$ on $W^{1,p}_0(\R^n)$ corresponding to $\L$ in the sense \eqref{e:weak} if and only if $f_p$ minimizes $\Rq$. In particular, we have the following two useful facts:
\begin{itemize}
    \item[(i)] $\L$ is the smallest among all the real Dirichlet eigenvalues of $\D$; 
    \item[(ii)] each eigenfunction corresponding to $\L$ is a bounded function in $C^{1,\alpha}(\Omega)$ for some $\alpha \in (0,1)$. Furthermore, the function is in $C^{1,\alpha}(\overline{\Omega})$ provided $\partial \Omega$ is of class $C^{2,\alpha}$. Additionally, if $\Omega$ is connected, each such eigenfunction does not change sign.
\end{itemize}
\end{theorem}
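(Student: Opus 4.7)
The plan is to establish the variational characterization first, then read off the spectral consequences and regularity. The overall structure mirrors the classical argument for $\Delta_p$: obtain a minimizer of $\Rq$ by the direct method, identify its Euler--Lagrange equation as \eqref{e:weak}, and use $1$-homogeneity of $h_L$ to close the equivalence in the other direction. Existence of a minimizer $f_p \in W_0^{1,p}(\Omega)$ follows by taking a minimizing sequence normalized by $\|f_k\|_{L^p(\Omega)}=1$, applying the affine invariance of $\Ep$ under $\SL(n)$ to bring each $f_k$ to a position in which $\||\nabla f_k|\|_{L^p} \le C \Ep(f_k)$, and then invoking Rellich--Kondrachov together with weak lower semicontinuity of $\Ep$ (the latter being established in the preceding sections via Fatou on the sphere, the convexity of $h_Q$, and the elementary inequality $\Ep(f) \le \||\nabla f|\|_{L^p}$ from Lemma \ref{l:useful}).

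The main obstacle, and the key computation, is the Euler--Lagrange derivation. Writing $\Ep(f)^p$ as a negative power of a spherical integral of $\|h_Q(\theta^t \nabla f(\cdot))\|_{L^p(\R^n)}$, a naive chain rule would produce a nested object unwieldy for PDE analysis. The role of the convex body $L_{Q,p,f}$ introduced in equation \eqref{eq:L_body} is precisely to linearize this: it is chosen so that $\Ep(f)^p$ admits the representation $c(n,m,p,Q)\int_{\R^n} h_{L_{Q,p,f}}(\nabla f)^p\,dx$ and so that $L_{Q,p,f}$ is stationary to first order under perturbations of $f$ (an envelope/optimality condition). Given this, the first variation of $\Ep^p$ in direction $\phi \in C_0^\infty(\Omega)$ reduces to $c \cdot p \int h_L^{p-1}(\nabla f) \nabla h_L(\nabla f)\cdot \nabla \phi\,dx$, which, after integration by parts, is exactly $p \int (\D f)\phi\,dx$. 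The hypothesis $Q \in \mathcal{C}^{n,m}$ ensures the $C^2$-regularity of $w(y,\cdot)=h_Q(y^t\cdot)$ needed to justify differentiation under the integral and to make sense of $\nabla h_L$ on $\R^n\setminus\{0\}$; this is the step I expect to be the main technical obstacle, and it is where the stationarity property of $L_{Q,p,f}$ is critical. Setting the first variation of $\Rq$ equal to zero yields \eqref{e:weak} in weak form.

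For the converse direction, suppose $f$ is a weak eigenfunction with eigenvalue $\L$. Testing \eqref{e:weak} against $f$ itself and using the $1$-homogeneity of $h_L$, so that $\nabla h_L(\xi)\cdot\xi = h_L(\xi)$, gives
\[
\int_\Omega h_{L_{Q,p,f}}(\nabla f)^p\,dx = \L \int_\Omega |f|^p\,dx,
\]
which, combined with the integral representation of $\Ep(f)^p$ in terms of $L_{Q,p,f}$, yields $\Rq(f)=\L$, so $f$ is a minimizer. This equivalence immediately implies (i): if $\mu$ were an eigenvalue of $\D$ with eigenfunction $g$, the same test computation gives $\Rq(g)=\mu$, and then $\mu \ge \L$ by definition of the infimum.

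For (ii), once the Euler--Lagrange equation is in hand, the operator $\D$ is a quasilinear divergence-form operator $-\operatorname{div} A(\nabla f)$ with $A(\xi)= h_L^{p-1}(\xi)\nabla h_L(\xi)$; since $Q \in \mathcal{C}^{n,m}$, the body $L_{Q,p,f_p}$ (which is fixed once $f_p$ is fixed) has a $C^2$ support function on $\R^n\setminus\{0\}$, and the $1$-homogeneity gives the standard $p$-ellipticity bounds on $A$ and $DA$. Boundedness of $f_p$ is obtained by Moser iteration from the equation, after which the interior regularity $C^{1,\alpha}(\Omega)$ follows from DiBenedetto--Tolksdorf, and $C^{1,\alpha}(\overline\Omega)$ under $\partial\Omega \in C^{2,\alpha}$ follows from Lieberman's boundary regularity. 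Finally, if $\Omega$ is connected, replacing $f_p$ by $|f_p|$ one checks via the same variational identity that $|f_p|$ is also a non-negative minimizer satisfying the same PDE, and then the strong maximum principle/Harnack inequality for quasilinear $p$-Laplace type operators (applicable because of the uniform ellipticity of $A$ away from zero) forces $|f_p|>0$ in $\Omega$, so $f_p$ does not change sign.
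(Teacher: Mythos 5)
Your overall architecture is the same as the paper's: derive the Euler--Lagrange equation for $\Rq$ via the body $L_{Q,p,f}$, obtain the converse by testing the weak equation with $f$ itself and using $\langle \nabla h_{L_{Q,p,f}}(\xi),\xi\rangle = h_{L_{Q,p,f}}(\xi)$ together with the identity $\int_\Omega h_{L_{Q,p,f}}(\nabla f)^p\,dx = (\Ep f)^p$, deduce (i), and prove (ii) by freezing $L_{Q,p,f_p}$ and invoking quasilinear regularity theory plus the strong maximum principle; this is exactly the route of Propositions~\ref{p:eleq_q}, \ref{p:eleq}, \ref{p:regularity2} and \ref{p:regularity1}. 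Two remarks on the variational step. Your ``envelope'' claim --- that the $f$-dependence of $L_{Q,p,f}$ contributes nothing at first order --- is in fact true (the terms produced by varying $\Ep(f)^{nm+p}$ and $\|h_Q(\theta^t\nabla f_\varepsilon(\cdot))\|_{L^p(\R^n)}^{-nm-p}$ cancel exactly after integration against $h_Q(\theta^t\nabla f)^p$), but you assert it rather than prove it; the paper avoids having to prove any stationarity by differentiating $\int_{\S}\|h_Q(\theta^t\nabla f_\varepsilon(\cdot))\|_{L^p(\R^n)}^{-nm}\,d\theta$ directly, which produces the body $K_{Q,p,f}$ of \eqref{eq:K_body} and then rescales to $L_{Q,p,f}$. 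Also, your aside on existence via $\SL(n)$ repositioning does not work on a fixed bounded $\Omega$: the repositioned functions are supported in $T^{-1}\Omega$ and leave $W_0^{1,p}(\Omega)$; coercivity comes instead from the $m$th-order Poincar\'e-type inequality, Lemma~\ref{l:useful}(ii) --- though existence is the separate Theorem~\ref{t:existence} and is not actually needed for the present statement.

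The genuine gap is in (ii): the claim that ``$1$-homogeneity gives the standard $p$-ellipticity bounds on $A$ and $DA$'' is not correct for a general convex body whose support function is merely $C^2$ away from the origin. Writing $L=L_{Q,p,f_p}$, one has $DA(\xi) = (p-1)h_L^{p-2}(\xi)\,\nabla h_L(\xi)\otimes\nabla h_L(\xi) + h_L^{p-1}(\xi)\,D^2h_L(\xi)$, and $D^2h_L(\xi)$ (whose eigenvalues on $\xi^\perp$ are radii of curvature of $\partial L$) may be degenerate in tangential directions; homogeneity and convexity only give positive semidefiniteness, so the lower bound $DA(\xi)x\cdot x \geq c|\xi|^{p-2}|x|^2$ needed for the DiBenedetto--Tolksdorf/Lieberman theory can fail. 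The paper's Proposition~\ref{p:regularity1} gets this bound not from homogeneity but from the integral representation $h_L^p(z) = c\int_{\S}\|h_Q(\theta^t\nabla f(\cdot))\|_{L^p(\R^n)}^{-nm-p}h_Q(\theta^t z)^p\,d\theta$: each fixed $\theta$ contributes only a rank-one (degenerate) quadratic form, and strict positivity comes from integrating over $\theta\in\S$, combined with the uniform two-sided bounds $c_0\|f\|_{L^p(\Omega)} \leq \|h_Q(\theta^t\nabla f(\cdot))\|_{L^p(\R^n)} \leq \||\nabla f|\|_{L^p(\Omega)}$ from Lemma~\ref{l:useful}(i); this is precisely the verification of Tolksdorf's structure conditions (Lemma~\ref{l:Tolksdorff}). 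To complete your argument you must either carry out that verification or prove that $L_{Q,p,f_p}$ is of class $C^2_+$; as written, the boundedness, $C^{1,\alpha}$ regularity, and maximum-principle steps are not yet justified.
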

The proof of Proposition~\ref{p:eleq} reveals that $\D$ can be defined in terms of its action on $W_0^{1,p}(\Omega)$:
\begin{align*}
    \langle \D f, \varphi \rangle = -\frac{1}{p}\frac{d}{dt}\Ep^p (f+t\varphi)\bigg|_{t=0}=\int_\Omega \langle h_{L_{Q,p,f}}(\nabla f)^{p-1} \nabla h_{L_{Q,p,f}}(\nabla f), \nabla \varphi \rangle dx.
\end{align*}

Recently, Crasta and Fragal\`a, created \cite{CF20} a framework studying generalizations of \eqref{eq:euclidean__PDE} with the Laplacian $\Delta_p$ replaced by a more general operator. In particular, they showed that solutions are log-concave. Our operator $\D$ falls outside this framework, since the convex body $L_{Q,p,f}$ depends on $f$. For this reason, establishing a Brunn-Minkowski-type inequality for $\L$ and proving the simplicity of the eigenfunctions $f_p$ remain beyond our current framework.

We conclude the topic of \eqref{e:weak} by establishing the $m$th-order analogue of the $L^p$ Talenti inequality from \eqref{eq:Talenti}. This elaborates on the $m=1$, $Q=[-1,1]$ case from Haddad and Jiao \cite{HX22}. The proof of the following theorem is in Section~\ref{s:talenti}.
\begin{theorem}
\label{t:talenti}
    Let $1<p<n$ and $\Omega\subset \R^n$ be a bounded, open set, and $Q \in \mathcal{C}^{n,m}$. Let $f_p$ be the first eigenfunction of \eqref{e:weak}, or, equivalently, $f_p$ is a minimizer on $W^{1,p}(\Omega)\setminus\{0\}$ for $\Rq$. Then, its distribution function $(0,\infty)\ni t\mapsto \mu_p(t):=\mu_{f_p}(t)$ satisfies

    \begin{equation}
    \left(n\omega_n^\frac{1}{n}(\mu_p(t))^\frac{n-1}{n}\right)^\frac{p}{p-1} \leq - \mu_p^\prime(t)\left(\L\left(\frac{\mu_p(t)}{t^{1-p}}+(p-1)\int_t^{+\infty}\frac{\mu_p(\tau)}{\tau^{2-p}}d\tau\right)\right)^\frac{1}{p-1}.
    \label{eq:Talenti_mth}
\end{equation}
\end{theorem}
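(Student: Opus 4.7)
My plan is to follow the classical Talenti derivation for the $p$-Laplace eigenvalue problem, replacing the Euclidean gradient $|\nabla f_p|$ by the ``affine gradient'' $h_L(\nabla f_p)$ with $L := L_{Q,p,f_p}$, and using the identity $\int_{\R^n} h_L(\nabla f_p)^p\, dx = \Ep(f_p)^p$ that makes $\D$ the variational derivative of $\tfrac{1}{p}\Ep^p$. By Theorem~\ref{t:operatoranalysis}(ii) we may assume $f_p > 0$ in $\Omega$, and the $C^{1,\alpha}$ regularity justifies all coarea manipulations below for a.e.\ level $t$. First, I would insert the Lipschitz test function $\phi_t^\epsilon := \epsilon^{-1}\min\{\epsilon,(f_p-t)_+\}$ into the weak form of \eqref{e:weak}; its gradient is $\epsilon^{-1}\nabla f_p \cdot \mathbf{1}_{\{t<f_p<t+\epsilon\}}$. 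Euler's identity $\nabla h_L(v)\cdot v = h_L(v)$ reduces the left-hand side to $\epsilon^{-1}\int_{\{t<f_p<t+\epsilon\}} h_L(\nabla f_p)^p\, dx$; passing $\epsilon\to 0^+$ via the coarea formula yields the level-set identity
\[
\int_{\{f_p = t\}} \frac{h_L(\nabla f_p)^p}{|\nabla f_p|}\, d\mathcal{H}^{n-1} \;=\; \L\int_{\{f_p > t\}} f_p^{p-1}\, dx.
\]
A layer-cake rewriting $f_p^{p-1} = t^{p-1} + (p-1)\int_t^{f_p} \tau^{p-2}\, d\tau$ on $\{f_p > t\}$, together with Fubini, converts the right-hand side into $\L \bigl[\,t^{p-1}\mu_p(t) + (p-1)\int_t^\infty \tau^{p-2}\mu_p(\tau)\, d\tau\,\bigr]$, matching the bracketed term of \eqref{eq:Talenti_mth}.

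Combining the preceding display with the identity from the PDE and taking $(p-1)$-th roots, the theorem reduces to the level-set ``affine isoperimetric'' lower bound
\[
\int_{\{f_p = t\}} \frac{h_L(\nabla f_p)^p}{|\nabla f_p|}\, d\mathcal{H}^{n-1} \;\geq\; \frac{\bigl(n\omega_n^{1/n}\,\mu_p(t)^{(n-1)/n}\bigr)^p}{(-\mu_p'(t))^{p-1}}.
\]
The natural route is a two-step Hölder-plus-isoperimetric argument. Writing $h_L(\nabla f_p)/|\nabla f_p| = \bigl(h_L(\nabla f_p)^p/|\nabla f_p|\bigr)^{1/p}\cdot |\nabla f_p|^{-(p-1)/p}$ and applying Hölder on $\{f_p=t\}$ with exponents $p$ and $p/(p-1)$, together with the coarea identity $-\mu_p'(t) = \int_{\{f_p=t\}} |\nabla f_p|^{-1}\, d\mathcal{H}^{n-1}$, produces
\[
\bigl(\textstyle\int_{\{f_p=t\}} h_L(\nabla f_p)/|\nabla f_p|\, d\mathcal{H}^{n-1}\bigr)^p \;\leq\; (-\mu_p'(t))^{p-1}\textstyle\int h_L(\nabla f_p)^p/|\nabla f_p|\, d\mathcal{H}^{n-1};
\]
the left-hand integrand is $h_L(\nu_t)$ with $\nu_t = \nabla f_p/|\nabla f_p|$, and the left side is the $L$-anisotropic perimeter of $\Omega(t,f_p)$ raised to the $p$-th power.

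The main obstacle is securing the \emph{Euclidean} isoperimetric constant $n\omega_n^{1/n}$ on the right of the target inequality: the usual Wulff isoperimetric inequality would only supply $n\vol(L_{Q,p,f_p})^{1/n}$, and I see no reason for this to exceed $n\omega_n^{1/n}$. I expect the resolution to proceed by bypassing the $L$-perimeter entirely and inserting an additional Hölder inequality in $\theta\in\S$, aligned with the definition of $h_{L_{Q,p,f_p}}^p$ as a $Q$-weighted harmonic-type average of $h_Q(\theta^t\cdot)^p$ over $\S$, so that the geometric input becomes the $m$th-order $L^1$ affine Sobolev inequality of \cite{HLPRY25,HLPRY25_2} applied to the indicator $\mathbf{1}_{\Omega(t,f_p)}$; this inequality does carry the sharp Aubin-Talenti constant $a_{n,1}=n\omega_n^{1/n}$. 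Executing this chain level-set by level-set -- in parallel with the derivation of the $m$th-order affine P\'olya-Szeg\"o principle in \cite{LRZ24} -- is the delicate computation I foresee, after which \eqref{eq:Talenti_mth} follows as described above.
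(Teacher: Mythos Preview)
Your overall strategy---test function in the weak equation, Euler's identity $\nabla h_L(v)\cdot v = h_L(v)$, a level-set H\"older/Jensen step, and anisotropic isoperimetry---matches the paper's proof almost line for line (the paper works with difference quotients and Jensen rather than level-set integrals and H\"older, and uses $(f_p-t)_+$ rather than your truncated $\phi_t^\epsilon$, but these are interchangeable). The gap is precisely at what you flag as the ``main obstacle'': you claim to see no reason why $\vol(L_{Q,p,f_p})\geq\omega_n$, and then retreat to a speculative and unexecuted bypass through an extra H\"older in $\theta\in\S$ and the $L^1$ affine Sobolev inequality on indicators.

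In fact $\vol(L_{Q,p,f_p})\geq\omega_n$ \emph{does} hold, and it is exactly the content of the $m$th-order Busemann-Petty centroid inequality \eqref{eq:BPcLQ} applied with $L=\Pi^\circ_{Q,p}f_p$. The normalizing constants in the definition \eqref{eq:L_body} of $L_{Q,p,f}$ are chosen so that \eqref{eq:BPcLQ}, together with the identity $\Gamma_{Q,p}\Pi^\circ_{Q,p}\B=\bigl(\tfrac{m}{(nm+p)\omega_n}\bigr)^{1/p}\B$, collapses to precisely $\vol(L_{Q,p,f})\geq\omega_n$ (with equality when $\Pi^\circ_{Q,p}f$ equals $\Pi^\circ_{Q,p}E$ for some ellipsoid $E$). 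Hence the Wulff isoperimetric inequality \eqref{eq:min} already yields
\[
P_{L_{Q,p,f_p}}(\Omega(t),\R^n)\;\geq\; n\,\vol(L_{Q,p,f_p})^{1/n}\mu_p(t)^{(n-1)/n}\;\geq\; n\,\omega_n^{1/n}\mu_p(t)^{(n-1)/n},
\]
and your proposed detour through $\S$ is unnecessary. Once you insert this single missing ingredient, your outline is complete and coincides with the paper's argument.
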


A natural question is, how does $\L$ compare to $\lambda_{1,p}(\Omega)$? Elaborating on \cite[Theorem 6]{HJM21}, the following proposition is an immediate consequence of \eqref{eq:relations} below.
\begin{proposition}
\label{p:relations}
Fix $m,n\in\N$, $Q\in\conbodio[m]$ and a bounded, open set $\Omega\subset \R^n$. Then, there exists a constant $d=d(Q,p,\Omega)$ such that
\[
\lambda_{1,p}(\Omega) \geq \lambda_{1,p}^{\mathcal{A}}(Q,\Omega) \geq d\left(\lambda_{1,p}(\Omega)\right)^\frac{1}{nm}.
\]
\end{proposition}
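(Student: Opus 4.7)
Both halves of the chain follow from more precise bounds on $\Ep f$ versus $\||\nabla f|\|_{L^p(\R^n)}$; dividing by $\|f\|_{L^p(\Omega)}^p$ and taking the infimum over $W_0^{1,p}(\Omega)\setminus\{0\}$ then transfers these pointwise bounds to the Rayleigh quotients, hence to the eigenvalues.

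For the upper bound $\lambda_{1,p}(\Omega) \geq \L$ I would invoke the pointwise domination $\Ep f \leq \||\nabla f|\|_{L^p(\R^n)}$ cited from \cite[Theorem~1.2]{LRZ24} (recorded in the paragraph after \eqref{eq:m_lp_affine_sobolev}). This immediately gives $\Rq f \leq \mathcal{R}_p f$ for every admissible test function $f$, and the infimum of the right-hand side is $\lambda_{1,p}(\Omega)$ by definition.

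For the lower bound $\L \geq d(Q,p,\Omega)\cdot\lambda_{1,p}(\Omega)^{1/nm}$, which is the substantive half, I would proceed by combining two already-available ingredients. First, the $m$th order affine $L^p$ Sobolev inequality from \cite{HLPRY25,HLPRY25_2} in the form
\[
\Ep f \,\geq\, a(n,m,p,Q)\,\|f\|_{L^{np/(n-p)}(\R^n)} \qquad (1<p<n),
\]
which, when paired with H\"older's inequality on the bounded domain $\Omega$ to trade $L^{np/(n-p)}$ for $L^p$ at the cost of a factor $|\Omega|^{1/n}$, yields an absolute lower bound of the form $\L \geq c_1(Q,p,n,m)\,|\Omega|^{-p/n}$. (For $p\geq n$, the same shape of bound is obtained instead from the Poincar\'e-type estimate in Lemma~\ref{l:useful}(ii), which is \cite[Theorem~5.1]{LRZ24}.) Second, the classical Faber--Krahn inequality \eqref{eq:FK_og} gives $\lambda_{1,p}(\Omega) \geq c_2(n,p)\,|\Omega|^{-p/n}$, so $|\Omega|^{-p/n}$ can be compared to $\lambda_{1,p}(\Omega)$ from both sides (bounded above by a constant times $\lambda_{1,p}(\Omega)$, bounded below by $\L/c_1$).

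The main obstacle I foresee is the bookkeeping of exponents needed to produce exactly the power $1/nm$ in the conclusion. I expect this power to emerge by interpolating the absolute lower bound $\L \geq c_1\,|\Omega|^{-p/n}$ against the trivial upper bound $\L \leq \lambda_{1,p}(\Omega)$ established above, with the weight $1/nm$ chosen so that the Faber--Krahn constant eliminates the explicit $|\Omega|^{-p/n}$ cleanly. The appearance of $1/nm$ itself is natural from the structure of $\Ep$: it is the reciprocal exponent arising from integration over $\S = \mathbb{S}^{nm-1}$ together with the factor $\Vol(\Pi_{Q,p}^\circ B_2^n)^{1/nm}$ in the normalising constant $d_{n,p}(Q)$, and the forthcoming display \eqref{eq:relations} should simply record the interpolated inequality in this form, from which the proposition reads off.
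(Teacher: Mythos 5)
Your first half is fine: $\Ep f \leq \||\nabla f|\|_{L^p(\R^n)}$ gives $\Rq f \leq \mathcal{R}_p f$ for every test function, hence $\lambda_{1,p}(\Omega)\geq \L$, exactly as in the paper. The second half, however, has a genuine gap. Your two ingredients both bound $\vol(\Omega)^{-p/n}$ from \emph{above}: the affine Sobolev inequality plus H\"older gives $\vol(\Omega)^{-p/n}\leq \L/c_1$ (not a lower bound, as your parenthetical asserts), while Faber--Krahn \eqref{eq:FK_og} gives $\vol(\Omega)^{-p/n}\leq \lambda_{1,p}(\Omega)/c_2$. Two upper bounds on the same quantity cannot be combined into $\L \geq d\,\lambda_{1,p}(\Omega)^{1/nm}$; for your scheme to close you would need a \emph{reverse} Faber--Krahn inequality $\lambda_{1,p}(\Omega)\lesssim \vol(\Omega)^{-p/n}$, which is false for general bounded open sets (a thin slab of fixed volume has $\lambda_{1,p}$ arbitrarily large while $\vol(\Omega)^{-p/n}$ stays constant). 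Similarly, interpolating your absolute lower bound $\L\geq c_1\vol(\Omega)^{-p/n}$ against the upper bound $\L\leq\lambda_{1,p}(\Omega)$ cannot yield a lower bound for $\L$ in terms of $\lambda_{1,p}(\Omega)$, because the inequality $\L\leq\lambda_{1,p}(\Omega)$ enters with the wrong sign. Your route also needlessly restricts to $1<p<n$.

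The missing idea is to use Lemma~\ref{l:useful}(ii) not as a fallback for $p\geq n$ and not to manufacture an absolute, volume-only bound, but as a \emph{per-function} comparison valid for all $p\geq 1$: from
\[
\Ep f \;\geq\; d_0\,\|f\|_{L^p(\Omega)}^{\frac{nm-1}{nm}}\,\||\nabla f|\|_{L^p(\Omega)}^{\frac{1}{nm}},
\]
dividing by $\|f\|_{L^p(\Omega)}$ and raising to the power $p$ gives
\[
\Rq f \;\geq\; d_0^{\,p}\left(\frac{\||\nabla f|\|_{L^p(\Omega)}^p}{\|f\|_{L^p(\Omega)}^p}\right)^{\frac{1}{nm}}
\]
for every $f\in W_0^{1,p}(\Omega)\setminus\{0\}$, and taking infima over $f$ yields \eqref{eq:relations} and the proposition with $d=d_0^{\,p}$. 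This is the paper's proof; it requires neither the affine Sobolev inequality, nor H\"older, nor Faber--Krahn, and it correctly produces the exponent $1/nm$ directly from the structure of Lemma~\ref{l:useful}(ii).
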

Proposition~\ref{p:relations} leads us to consider: when does $\lambda_{1,p}(\Omega) = \lambda_{1,p}^{\mathcal{A}}(Q,\Omega)$? To this end, our next result is the following, whose proof is in Section~\ref{sec:rigid}.
\begin{theorem}
\label{t:rigid}
    Let $\Omega\subset \R^n$ be a bounded, open set and $Q \in \mathcal{C}^{n,m}$. Then, it holds that
    \begin{enumerate}
        \item For $p>1$, $\lambda_{1,p}(\Omega) = \lambda_{1,p}^{\mathcal{A}}(Q,\Omega)$ if and only if $\Omega$ is a Euclidean ball;
        \item If $\lambda_{1,1}(\Omega) = \lambda_{1,1}^{\mathcal{A}}(Q,\Omega)$, then the minimizer of both eigenvalues can be taken to be the characteristic function of a ball, whose boundary is contained in $\partial \Omega$. In particular, if $\Omega$ is convex, then it must be a Euclidean ball. 
    \end{enumerate}
\end{theorem}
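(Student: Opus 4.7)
The strategy rests on the two-sided Sobolev-energy sandwich, both inequalities established in \cite{LRZ24}:
\[
\||\nabla f^\star|\|_{L^p(\R^n)} \;\leq\; \Ep f \;\leq\; \||\nabla f|\|_{L^p(\R^n)}, \qquad f \in W_0^{1,p}(\R^n).
\]
The left inequality is the $m$th-order affine P\'olya--Szeg\H{o} principle; the right inequality (Theorem~1.2 of \cite{LRZ24}) becomes an \emph{equality} whenever $f$ is radial, as a direct polar-coordinates computation confirms --- the $1$-homogeneity of $h_Q$ together with the normalization of $d_{n,p}(Q)$ through $\vol[nm](\Pi_{Q,p}^\circ B_2^n)$ is precisely what makes this sharp on the radial class. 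Dividing by $\|f\|_{L^p(\Omega)}^p$ yields $\mathcal{R}_p f^\star \leq \Rq f \leq \mathcal{R}_p f$, and taking infima produces the skeleton
\[
\lambda_{1,p}(B_\Omega) \;\leq\; \lambda_{1,p}^{\mathcal{A}}(Q,\Omega) \;\leq\; \lambda_{1,p}(\Omega)
\]
that drives both parts of the theorem.

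For part~(1), the \emph{if} direction is immediate: when $\Omega$ is a Euclidean ball, $B_\Omega$ is a translate of $\Omega$, the outer terms of the skeleton coincide, and the middle is squeezed. For the \emph{only if} direction, suppose $\lambda_{1,p}^{\mathcal{A}}(Q,\Omega) = \lambda_{1,p}(\Omega)$ and let $g_p \in W_0^{1,p}(\Omega)$ denote the classical Euclidean first $p$-eigenfunction on $\Omega$. Combining the skeleton with $\mathcal{R}_p g_p = \lambda_{1,p}(\Omega)$ gives
\[
\lambda_{1,p}^{\mathcal{A}}(Q,\Omega) \;\leq\; \Rq g_p \;\leq\; \mathcal{R}_p g_p \;=\; \lambda_{1,p}(\Omega) \;=\; \lambda_{1,p}^{\mathcal{A}}(Q,\Omega),
\]
so that $\Ep g_p = \||\nabla g_p|\|_{L^p(\R^n)}$. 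At this point I invoke the rigidity case of the right inequality in the sandwich --- in the $m=1$ symmetric-interval case this is classical Lutwak--Yang--Zhang rigidity, and the $m$th-order version should be extracted from \cite{LRZ24} --- asserting that this equality forces $g_p$ to be, up to translation, radially symmetric. Since $g_p \in C^{1,\alpha}(\overline\Omega)$ by Theorem~\ref{t:operatoranalysis}, does not change sign on each connected component of $\Omega$, and vanishes on $\partial \Omega$, the radiality of $g_p$ forces the component of $\Omega$ carrying $g_p$ to be a ball; after reducing to connected $\Omega$, $\Omega$ itself is a Euclidean ball.

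Part~(2) runs the same scheme at $p=1$ in $\operatorname{BV}(\Omega)$. By the coarea formula a minimizer of $\mathcal{R}_1$ may be taken as the characteristic function $\chi_C$ of a Cheeger set $C \subseteq \Omega$. The collapsed chain now reads $\mathcal{E}_{Q,1}\chi_C = |D\chi_C|(\R^n) = P(C)$, and the $\operatorname{BV}$ version of the sandwich-rigidity forces $C$ to be a Euclidean ball $B$. Since $\chi_B$ attains the Cheeger constant $h(\Omega) = \lambda_{1,1}(\Omega)$, standard Cheeger-set theory furnishes $\partial B \subseteq \partial\Omega$: were some point of $\partial B$ interior to $\Omega$, a small outward perturbation of $B$ still contained in $\Omega$ would strictly decrease the isoperimetric ratio $P/|\cdot|$ by the classical isoperimetric inequality, contradicting optimality. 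If $\Omega$ is convex, the combination $B \subseteq \Omega$ and $\partial B \subseteq \partial\Omega$ leaves no room for $\Omega$ to extend beyond $B$, forcing $\Omega = B$. The central obstacle throughout is extracting the sharp rigidity for the upper inequality $\Ep f \leq \||\nabla f|\|_{L^p(\R^n)}$ in both the $W^{1,p}$ and $\operatorname{BV}$ settings; this is the sole non-classical ingredient, and once in hand the remainder reduces to standard Faber--Krahn and Cheeger-set reasoning.
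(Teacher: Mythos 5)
Your proof follows essentially the same route as the paper's in both parts. For part~(1), both arguments use the P\'olya--Szeg\"o principle (Lemma~\ref{l;Sobolevlemma}(b)) for the ``if'' direction and the equality case of $\Ep f \leq \||\nabla f|\|_{L^p(\R^n)}$ applied to the classical first eigenfunction for the ``only if'' direction. For part~(2), both pass to a Cheeger set $K$ and apply the same rigidity to conclude $K$ is a ball $B$.

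The genuine flaw is in the final step of part~(2), where you assert that if some point of $\partial B$ were interior to $\Omega$, then ``a small outward perturbation of $B$ still contained in $\Omega$ would strictly decrease the isoperimetric ratio $P/|\cdot|$ by the classical isoperimetric inequality.'' The classical isoperimetric inequality provides only a \emph{lower} bound $P(C)\geq n\omega_n^{1/n}|C|^{(n-1)/n}$ for the perturbed set $C$; it says nothing about whether a specific outward bump lowers $P(C)/|C|$ below $P(B)/|B|$. The mechanism that actually closes the argument is a first-variation computation on the free boundary: a compactly supported outward normal variation changes perimeter at rate equal to the mean curvature $\tfrac{n-1}{r}$ per unit volume, which is strictly less than the current Cheeger ratio $\tfrac{n}{r}$, hence decreases $P/|\cdot|$. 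This is exactly what the paper does via \cite[Remark 7]{KF03}, which states that $\partial K\cap\Omega$ has constant (normalized) mean curvature $\tfrac{1}{n-1}\lambda_{1,1}(\Omega)=\tfrac{n}{(n-1)r}$; this is incompatible with the mean curvature $\tfrac{1}{r}$ of the ball, so $\partial K\cap\Omega$ must be empty. You should replace the appeal to the isoperimetric inequality with this mean-curvature computation. Note also that the paper only invokes the regularity result of \cite{KF03} in the convex case; your phrasing does not track where, if at all, convexity is used, and this should be clarified.
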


Our next result is the following affine $m$th-order Faber-Krahn inequality, which we prove in Section~\ref{sec:fk}. This elaborates on the case $m=1$ and $Q=[-1,1]$  from the influential work by Haddad, Jim\'enez, and Montenegro \cite{HJM21}. 
\begin{theorem}
\label{t:fk}
Fix $m,n\in\N$, $ Q\in\mathcal{C}^{n,m}$, $p \geq 1$ and let $E \subset \R^n$ be an ellipsoid.  Then,
\begin{equation}\label{e:fkinequality}
\L \geq \lambda_{1,p}^{\mathcal{A}}(Q,E)
\end{equation}
holds for every bounded, open subset $\Omega$ in $\R^n$ having the same volume as $E$. Furthermore, there is equality in \eqref{e:fkinequality} if and only if $\Omega$ is an ellipsoid with the same volume as $E$. 
\end{theorem}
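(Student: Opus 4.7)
The plan is to combine the $m$th-order affine P\'olya--Szeg\H{o} principle of \cite{LRZ24} with an affine-invariance reduction from arbitrary ellipsoids to the Euclidean ball. First I would verify that $\Rq$ is $\operatorname{SL}(n)$-invariant: for $T \in \operatorname{SL}(n)$ and $f \in W^{1,p}_0(\Omega)$, a change of variables on $\R^n$ yields
\[
\|h_Q(\theta^t \nabla (f\circ T))\|_{L^p(\R^n)} = \|h_Q((T\theta)^t \nabla f)\|_{L^p(\R^n)},
\]
and the $\S$-integral defining $\Ep$ in \eqref{eq:m_lp_affine_sobolev} is left unchanged because, in polar coordinates on $\R^{nm}$, the induced map $\theta \mapsto T\theta$ has Jacobian $(\det T)^m = 1$. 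Since $\|f\circ T\|_{L^p}$ is likewise invariant and $\Rq$ is manifestly translation invariant, $\L$ depends on $\Omega$ only through its orbit under volume-preserving affine maps. In particular, any two ellipsoids of equal volume produce the same value of $\lambda_{1,p}^{\mathcal{A}}(Q,\cdot)$, so it suffices to prove $\L \geq \lambda_{1,p}^{\mathcal{A}}(Q, B_\Omega)$, where $B_\Omega$ denotes the centered Euclidean ball with $\vol(B_\Omega) = \vol(\Omega)$.

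For $p > 1$, let $f_p \in W^{1,p}_0(\Omega)$ be the non-negative minimizer of $\Rq$ provided by Theorem~\ref{t:operatoranalysis}, and let $f_p^\star \in W^{1,p}_0(B_\Omega)$ be its spherically symmetric rearrangement. Equimeasurability gives $\|f_p^\star\|_{L^p(B_\Omega)} = \|f_p\|_{L^p(\Omega)}$, while the $m$th-order affine P\'olya--Szeg\H{o} principle of \cite{LRZ24}, the natural analogue of \eqref{eq 8.25.6} for $\Ep$, gives $\Ep(f_p) \geq \Ep(f_p^\star)$. Because $f_p^\star$ is radial, the inequality $\Ep(\cdot) \leq \||\nabla \cdot|\|_{L^p(\R^n)}$ recalled after \eqref{eq:m_lp_affine_sobolev} saturates; this is precisely the role played by the normalization $d_{n,p}(Q)$, which via polar coordinates on $\R^{nm}$ and the volume of $\Pi_{Q,p}^\circ B_2^n$ is chosen so that $\Ep(f_p^\star) = \||\nabla f_p^\star|\|_{L^p(B_\Omega)}$. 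Dividing through by $\|f_p\|_{L^p(\Omega)}^p$ yields
\[
\L = \Rq(f_p) \;\geq\; \Rq(f_p^\star) \;\geq\; \lambda_{1,p}^{\mathcal{A}}(Q, B_\Omega) \;=\; \lambda_{1,p}^{\mathcal{A}}(Q, E).
\]
The case $p = 1$ proceeds identically, with $W^{1,p}_0(\Omega)$ replaced by $\operatorname{BV}(\Omega)$ and the $L^1$ form of the affine P\'olya--Szeg\H{o} principle invoked (compare \cite{TW13,LRZ24}).

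The main obstacle is the equality case, where the rigidity of the affine P\'olya--Szeg\H{o} principle is indispensable. If equality holds in \eqref{e:fkinequality}, then every inequality in the chain above must be an equality; in particular, $\Ep(f_p) = \Ep(f_p^\star)$. For $p > 1$, I would invoke the equality characterization of the $m$th-order affine P\'olya--Szeg\H{o} principle from \cite{LRZ24}, which generalizes Nguyen's $m=1$ result \cite{NVH16}: equality forces $f_p = f_p^\star \circ A$ for some volume-preserving affine map $A$ (up to translation). Since $\operatorname{supp}(f_p^\star) = B_\Omega$, this forces $\Omega = \operatorname{supp}(f_p)$ to be the $A^{-1}$-image of $B_\Omega$, hence an ellipsoid of volume $\vol(\Omega) = \vol(E)$. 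For $p = 1$, the coarea formula and the BV theory reduce the minimizer to a characteristic function of a set of finite perimeter $\Omega' \subseteq \overline{\Omega}$, and the $L^1$ equality case of the affine P\'olya--Szeg\H{o} principle (as in \cite{TW13} extended to the asymmetric/$m$th-order setting) then forces $\Omega'$ to be an ellipsoid; equimeasurability and the volume hypothesis conclude the argument. The converse implication is immediate from the affine invariance established in the first step.
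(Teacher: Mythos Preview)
Your approach for the inequality when $p>1$ matches the paper's: affine invariance reduces to the ball $B_\Omega$, then the $m$th-order affine P\'olya--Szeg\H{o} principle applied to a minimizer $f_p$ gives $\Ep f_p \geq \Ep f_p^\star = \||\nabla f_p^\star|\|_{L^p}$, hence $\L \geq \lambda_{1,p}^{\mathcal{A}}(Q,B_\Omega)$. For $p=1$ the paper proceeds differently: rather than an $L^1$ P\'olya--Szeg\H{o} principle, it combines H\"older's inequality with the $m$th-order affine \emph{Sobolev} inequality (Lemma~\ref{l;Sobolevlemma}(a)), obtaining directly
\[
\lambda_{1,1}^{\mathcal{A}}(Q,\Omega) \geq \frac{\mathcal{E}_{Q,1} f_1}{\|f_1\|_{L^{n/(n-1)}}\vol(\Omega)^{1/n}} \geq \frac{n\omega_n^{1/n}}{\vol(\Omega)^{1/n}} \geq \lambda_{1,1}^{\mathcal{A}}(Q,B_\Omega).
\]
This buys a clean equality analysis: the Sobolev equality case forces $f_1 = a\chi_E$ for an ellipsoid $E$, and H\"older equality then forces $\Omega = E$ in one stroke, without appealing to any Cheeger-type reduction or BV rigidity.

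There is a genuine gap in your $p>1$ equality argument. The equality characterization of the $m$th-order affine P\'olya--Szeg\H{o} principle (Lemma~\ref{l;Sobolevlemma}(b)) requires the \emph{consistency condition}
\[
\vol\bigl(\{|\nabla f_p^\star| = 0\} \cap \{0 < f_p^\star < \|f_p\|_\infty\}\bigr) = 0,
\]
which you do not verify. The paper closes this gap by an extra step: from equality in the chain, $f_p^\star$ is a minimizer of $\lambda_{1,p}^{\mathcal{A}}(Q,B_\Omega)$, and since $\Ep f_p^\star = \||\nabla f_p^\star|\|_{L^p}$ it is also a minimizer of the \emph{classical} $\lambda_{1,p}(B_\Omega)$, hence a first positive eigenfunction of the $p$-Laplacian on the ball, hence strictly radially decreasing --- which is exactly the consistency condition. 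Only then can the rigidity statement be invoked to conclude $f_p(x) = F(|Ax|)$. A second, smaller issue: your conclusion ``$\operatorname{supp}(f_p) = \Omega$, so $\Omega = A^{-1}B_\Omega$'' presupposes $f_p > 0$ throughout $\Omega$, which Theorem~\ref{t:operatoranalysis}(ii) only guarantees for connected $\Omega$. The paper instead sets $g = f_p \circ A^{-1}$ on $A\Omega$, observes $g$ is radial and minimizes $\lambda_{1,p}(A\Omega)$, and then invokes the equality case of the \emph{classical} Faber--Krahn inequality to force $A\Omega = B_\Omega$.
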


Finally, we recall the relationship between the Poincar\'e constant $\lambda_{1,1}(\Omega)$ and the so-called \textit{Cheeger} constant given by
\begin{equation}
\label{eq:Cheeger}
I_1(\Omega) = \inf_{C\subseteq \Omega}\frac{\vol[n-1](\partial C)}{\vol(C)}.
\end{equation}
Here, $\partial C$ denotes the topological boundary of $C$, and the infimum runs over all sets of finite perimeter (recall, this means $\chi_C\in \operatorname{BV}(\Omega)$). It is well known that the infimum is obtained by some set $C_0$, a so-called Cheeger set of $\Omega$, and that $\chi_{C_0}$ minimizes $\lambda_{1,1}(\Omega)$. In fact, $I_1(\Omega) = \lambda_{1,1}(\Omega)$. 

In \cite[Theorem 8]{HJM21}, the authors of that work considered the affine analogue of Cheeger sets. A key finding of theirs was that these sets are in maximal volume position. Given a compact $C\subset \R^n$, a position of $C$ is precisely an affine image of $C$, i.e. a set of the form $AC+x_0$, where $A\in \operatorname{GL}(n)$ and $x_0\in\R^n$. We say $C\subset \Omega$ is in position of maximal volume if $\vol(C) \geq \vol(AC+x_0)$ for any position $AC+x_0$ of $C$ such that $AC+x_0 \subset \overline{\Omega}$. Following their lead, we establish the following result, which we prove in Section~\ref{sec:cheeger}.

\begin{theorem}
\label{t:cheeger}
    Fix $m,m\in\N$ and let $\Omega\subset\R^n$ be a bounded, open set. Then, the $m$th-order affine Cheeger constant of $\Omega$ with respect to $Q\in\conbodio[m]$, given by 
    \[
    I_{Q,1}^\mathcal{A} = \inf_{C\subset \Omega} \frac{\E \chi_C}{\|\chi_C\|_{L^1(\R^n)}} = d_{n,p}(Q)(nm)^{-\frac{1}{nm}}\inf_{C\subset \Omega}\frac{\vol[nm](\Pi^\circ_{Q,1} C)^{-\frac{1}{nm}}}{\vol(C)},
    \]
    is minimized among all measurable sets of finite perimeter by a set $C_0$, referred to as an $m$th-order affine Cheeger set of $\Omega$ with respect to $Q$, for which $\chi_{C_0}$ is a minimizer of $\lambda_{1,1}^\mathcal{A}(Q,\Omega)$. Moreover, $C_0$ is in maximal volume position inside $\Omega$.
\end{theorem}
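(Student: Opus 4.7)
The overall strategy follows the blueprint of \cite[Theorem~8]{HJM21} (which treated the special case $m=1$, $Q=[-1,1]$) and decomposes into three parts: existence of a minimizer $C_0$, identification of $\chi_{C_0}$ as a minimizer of $\lambda_{1,1}^{\mathcal{A}}(Q,\Omega)$, and the maximal volume position. The second equality in the statement is purely definitional: for a set of finite perimeter $C$, the BV form of $\|h_Q(\theta^t\cdot)\|_{L^1(\R^n)}$ evaluated at $\chi_C$ equals $S_Q(\theta,C):=\int_{\partial^\ast C}h_Q(\theta^t\nu_C)\,d\mathcal{H}^{n-1}$, and $\int_{\S}S_Q(\theta,C)^{-nm}\,d\theta=nm\,\vol[nm](\Pi^\circ_{Q,1}C)$ by the standard support-function--polar-volume formula recalled in Section~\ref{s:prelim}.

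For existence, I would apply the direct method. Along a minimizing sequence $\{C_k\}\subset\Omega$, Proposition~\ref{p:relations} converts the uniform bound on $\E\chi_{C_k}/\vol(C_k)$ into a uniform bound on the classical perimeters $|D\chi_{C_k}|(\R^n)$; together with $\vol(C_k)\leq\vol(\Omega)$, BV compactness on bounded domains produces a subsequence $\chi_{C_k}\to\chi_{C_0}$ in $L^1(\R^n)$ for some set of finite perimeter $C_0\subset\overline{\Omega}$. The $L^1$-lower semicontinuity of $\E$ on characteristic functions established in \cite{LRZ24}, together with $L^1$-continuity of the denominator, forces $C_0$ to attain the infimum; a standard normalization argument excludes $\vol(C_0)=0$.

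To identify $\chi_{C_0}$ as a $\lambda^{\mathcal{A}}_{1,1}(Q,\Omega)$-minimizer I combine a directional BV coarea decomposition with a reverse Minkowski integral inequality. For $f\in\mathrm{BV}(\Omega)$ with superlevel sets $\Omega(t,f)$, the coarea formula gives $\|h_Q(\theta^t\eta_f)\|_{L^1(\R^n)}=\int_0^\infty S_Q(\theta,\Omega(t,f))\,dt$ for each $\theta\in\S$, while the layer-cake formula gives $\|f\|_{L^1(\R^n)}=\int_0^\infty\vol(\Omega(t,f))\,dt$. The Minkowski integral inequality with negative exponent $-nm$ for positive integrands,
\[
\left(\int_{\S}\left(\int_0^\infty g(t,\theta)\,dt\right)^{-nm}d\theta\right)^{-1/nm}\geq\int_0^\infty\left(\int_{\S}g(t,\theta)^{-nm}d\theta\right)^{-1/nm}dt,
\]
then yields $\E f\geq\int_0^\infty\E\chi_{\Omega(t,f)}\,dt$. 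Since $\E\chi_{\Omega(t,f)}\geq I_{Q,1}^{\mathcal{A}}\,\vol(\Omega(t,f))$ for a.e.\ $t>0$ by definition of $I_{Q,1}^{\mathcal{A}}$, integrating gives $\E f\geq I_{Q,1}^{\mathcal{A}}\|f\|_{L^1(\R^n)}$, whence $\lambda^{\mathcal{A}}_{1,1}(Q,\Omega)\geq I_{Q,1}^{\mathcal{A}}$. The reverse inequality is trivial since $\chi_{C_0}$ is admissible in $\mathrm{BV}(\Omega)$, so $\chi_{C_0}$ attains both infima.

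Finally, the maximal volume position follows from an affine scaling argument. Suppose $AC_0+x_0\subset\overline{\Omega}$ with $A\in\GL(n)$, and write $A=|\det A|^{1/n}S$ with $|\det S|=1$. Translation invariance and the $\SL(n)^\pm$-invariance of $\E$ on characteristic functions, together with the homogeneities $\E\chi_{\lambda K}=\lambda^{n-1}\E\chi_K$ and $\vol(\lambda K)=\lambda^n\vol(K)$, yield
\[
\frac{\E\chi_{AC_0+x_0}}{\|\chi_{AC_0+x_0}\|_{L^1(\R^n)}}=|\det A|^{-1/n}\,\frac{\E\chi_{C_0}}{\|\chi_{C_0}\|_{L^1(\R^n)}}=|\det A|^{-1/n}\,I_{Q,1}^{\mathcal{A}}.
\]
If $|\det A|>1$, then a slight inward perturbation of $AC_0+x_0$ (which remains inside $\Omega$) still produces a Rayleigh quotient strictly below $I_{Q,1}^{\mathcal{A}}$, contradicting minimality of $C_0$. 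The main technical obstacle is the coarea--Minkowski step: one must justify the reverse integral inequality rigorously in the BV setting and correctly handle the directions $\theta$ for which $S_Q(\theta,\Omega(t,f))$ may degenerate. This parallels Minkowski integral manipulations appearing in the affine $L^p$ Sobolev literature but requires care in the present non-smooth context.
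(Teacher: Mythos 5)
Your core mechanism is the paper's: the BV coarea formula \eqref{eq:coarea_BV} plus Minkowski's integral inequality for the negative exponent $-nm$ gives $\E f \geq \int_0^{\infty}\E \chi_{\Omega(t,f)}\,dt$, which yields $\lambda_{1,1}^{\mathcal{A}}(Q,\Omega)=I_{Q,1}^{\mathcal{A}}$, and the $\GL(n)$-covariance of the Rayleigh quotient on characteristic functions (via \cite[Proposition~3.6]{HLPRY25_2}) gives the maximal volume position; your homogeneity $|\det A|^{-1/n}$ is in fact the correct exponent (the paper states $|\det A|^{-1/nm}$, but only its negativity is used). Where you diverge is the existence of $C_0$: you run a direct method over sets, whereas the paper gets existence for free by applying the coarea--Minkowski inequality to the BV minimizer $f_1$ of $\lambda_{1,1}^{\mathcal{A}}(Q,\Omega)$ furnished by Theorem~\ref{t:existence}(1); the equality chain $0=\E f_1-\lambda_{1,1}^{\mathcal{A}}(Q,\Omega)\|f_1\|_{L^1(\Omega)}\geq\int_0^{\|f_1\|_{L^\infty}}\bigl(\E\chi_{\Omega(t,f_1)}-\lambda_{1,1}^{\mathcal{A}}(Q,\Omega)\vol(\Omega(t,f_1))\bigr)dt\geq 0$ forces almost every superlevel set of $f_1$ to minimize, so existence and the identification of $\chi_{C_0}$ as a $\lambda_{1,1}^{\mathcal{A}}$-minimizer come simultaneously. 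Your direct-method step, as written, has a gap: the $L^1$-lower semicontinuity of $\E$ on characteristic functions is not established in \cite{LRZ24} (the paper only records weak $W_0^{1,p}$ lower semicontinuity for $p>1$, Lemma~\ref{l:lsc}), and it is not automatic, because $\E$ is an \emph{increasing} function of the anisotropic perimeters $\theta\mapsto\|\theta\|_{\Pi^\circ_{Q,1}C}$, so their individual lower semicontinuity points the wrong way; one needs a reverse-Fatou argument using the uniform bound $\|\theta\|_{\Pi^\circ_{Q,1}C_k}\geq c_0\vol(C_k)$ from Lemma~\ref{l:useful}(i) together with a volume lower bound along the minimizing sequence, which follows from the affine Sobolev inequality of Lemma~\ref{l;Sobolevlemma}(a). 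Relatedly, the perimeter bound for the minimizing sequence should be drawn from the functional inequality in Lemma~\ref{l:useful}(ii), not from Proposition~\ref{p:relations}, which compares the infima rather than individual quotients. Since your own coarea--Minkowski computation applied to $f_1$ already produces the Cheeger set as a superlevel set, the cleanest fix is to drop the direct method and argue as the paper does.
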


The set $\Pi^\circ_{Q,1} C$ is defined via its gauge in \eqref{eq:polar_projection_finite_perim} below, and we used \eqref{eq:gauge_volume} for its volume. 

\section{Preliminaries}\label{s:prelim}

\subsection{Function spaces}
We start with the following definitions for functions.
Given $f\in L^p(\R^n)$, its \emph{decreasing rearrangement} $f^\ast:[0,\infty)\rightarrow [0,\infty]$ is defined by
\begin{equation}
  f^\ast(s)=\sup\{t>0:\mu_f(t)>s\} \quad \text{for }s\geq 0.
\end{equation}
The \emph{spherically symmetric rearrangement} $f^\star$ of $f$ is defined as
\begin{equation}
\label{eq:rearrange}
    f^\star(x)=f^\ast(\omega_n|x|^n) \quad \text{for } x\in \R^n.
\end{equation}
Spherically symmetric rearrangement preserves the distribution function, that is
\begin{equation}
    \mu_{f}=\mu_{f^\star}.
\label{eq:dis_funs}
\end{equation}
We will require the following regularity conditions due to Tolksdorff \cite{Tolksdorf}, which we state as a lemma. 

\begin{lemma} \label{l:Tolksdorff} 
Let $\nu$ be a vector field for which there exist positive, absolute constants $c_1$ and $c_2$ such that, for every $z \in \R^n \setminus \{0\}$ and $x \in \R^n$, 
 \[
 \quad \sum_{j,k=1}^n \frac{\partial \nu(z)}{\partial z_k} x_jx_k \geq c_1 |z|^{p-1}|x|^2  \text{ and }  \sum_{j,k=1}^n \left| \frac{\partial \nu_j(z)}{\partial z_k} \right| \leq c_2 |z|^{p-2}. 
    \]
Then $-\text{div}(\nu(z))$ is a Wulff type degenerate quasilinear elliptic operator.
\end{lemma}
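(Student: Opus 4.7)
The plan is to recognize that this lemma is essentially a reformulation/citation of the structural hypotheses Tolksdorff \cite{Tolksdorf} imposes to define his class of degenerate quasilinear elliptic operators, rather than a statement requiring genuine new argument. The proof therefore reduces to matching notations between the two conventions.

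First, I would recall Tolksdorff's setup: he considers divergence-form operators $-\operatorname{div}(\nu(z))$ with $\nu \in C^1(\R^n\setminus\{0\};\R^n)$ satisfying two structural conditions — a coercivity bound $\sum_{j,k} \partial_{z_k}\nu_j(z)\, x_j x_k \geq \lambda(|z|)|x|^2$ with $\lambda(|z|) \sim |z|^{p-1}$ on the quadratic form associated to the Jacobian $D\nu(z)$, and a growth bound $|\partial_{z_k}\nu_j(z)| \leq \Lambda(|z|)$ with $\Lambda(|z|)\sim |z|^{p-2}$ on the matrix entries. The two displayed inequalities in the lemma are precisely these conditions with $\lambda(|z|)=c_1|z|^{p-1}$ and $\Lambda(|z|)=c_2|z|^{p-2}$, so membership in Tolksdorff's operator class is a definitional consequence.

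Second, I would note, for later use in the paper, why this class is the correct target: the Wulff-type vector fields $\nu(z) = h_K(z)^{p-1}\nabla h_K(z)$ that appear in Definition~\ref{d:D} (specifically with $K = L_{Q,p,f}$) inherit the required bounds from the $1$-homogeneity and convexity of the support function $h_K$, provided $h_K$ has a positive-definite Hessian in tangent directions on $\R^n\setminus\{0\}$, which is exactly what the regularity assumption $Q \in \mathcal{C}^{n,m}$ guarantees. Thus the lemma furnishes the bridge between the affine/geometric side ($Q \in \mathcal{C}^{n,m}$) and the PDE side (applicability of Tolksdorff's interior and boundary regularity theorems).

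The main — and essentially only — obstacle is bookkeeping: different references for Tolksdorff-type estimates adopt slightly different conventions on whether the exponent in the ellipticity bound is $p-1$ or $p-2$, and whether the constants are absolute or allowed to depend on $|z|$. Once the correspondence with \cite{Tolksdorf} is fixed, the statement is immediate and requires no further computation, since the conclusion simply asserts membership in the class of operators to which Tolksdorff's $C^{1,\alpha}$ regularity theory applies — which is the form in which the lemma is invoked later (notably in the regularity assertion of Theorem~\ref{t:operatoranalysis}(ii)).
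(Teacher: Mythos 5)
Your reading is the same as the paper's: Lemma~\ref{l:Tolksdorff} is stated there without proof, purely as a restatement of the structural hypotheses from \cite{Tolksdorf} under which his degenerate quasilinear elliptic regularity theory applies, so "proving" it amounts to the notational matching you describe (the substantive work of verifying these bounds for $\nu(z)=h_{L_{Q,p,f}}(z)^{p-1}\nabla h_{L_{Q,p,f}}(z)$ is done separately, in Proposition~\ref{p:regularity1}). Your remark about the $|z|^{p-1}$ versus $|z|^{p-2}$ convention in the ellipticity bound is apt, since the verification later in the paper produces the exponent $p-2$, consistent with Tolksdorff's formulation.
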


We now consider convex geometry. The \emph{gauge} of $K\in \conbodio[n]$ is defined to be 
\begin{equation}
	\|y\|_K=\inf\{r>0:y\in rK\}.
\end{equation}
Note that $\|\cdot\|_{K}$ is the possibly asymmetric norm whose unit ball is $K$. One has the formula
\begin{equation}
\label{eq:gauge_volume}
    \vol[n](K)=\frac{1}{n}\int_{\s}\|\theta\|_{K}^{-n}d\theta,
\end{equation}
where $d\theta$ represents the spherical Lebesgue measure on $\s$, the unit sphere.

We recall that the support functions $h_K$ are $1$-homogeneous. Therefore, we have the formula for $u\in\s$ and $K\in\conbod[n]$,
\begin{equation}
\label{eq:support_homo}
h_K(u)=\frac{d}{dt}th_K(u)=\frac{d}{dt}h_K(tu) = \langle \nabla h_K(u),u \rangle.
\end{equation}

We next recall the $L^p$ surface area measure of a convex body $K$. For $p=1$, the surface area measure $\sigma_K=\sigma_{K,1}$ of a convex body $K\in\conbod[n]$, is given as follows. Denote by $n_K$ the Gauss map of $K$ defined almost everywhere on $\partial K$; its \emph{surface area measure}, $\sigma_K$, is the Borel measure on $\mathbb{S}^{n-1}$ given by
\[\sigma_K(D)=\mathcal H ^{n-1}(n^{-1}_K(D)),\]
for every Borel subset $D$ of $\s$, where $n_K^{-1}(D)$ consists of all boundary points of $K$ with outer unit normals in $D$. For $p>1$, $\sigma_{K,p}$ is the $L^p$ surface area measure of $K\in \conbodio[n]$, which was introduced by Lutwak \cite{LE93, LE96}:
\begin{equation}
	d\sigma_{K,p}(u)=h_K(u)^{1-p}d\sigma_{K}(u). 
\end{equation}
The $L^p$ surface area measure is a cornerstone of the $L^p$ Brunn-Minkowski theory and has been a major focus of research in the theory of convex bodies over the last few decades.

We can now state the following isoperimetric inequality for sets of finite perimeter from \cite[Proposition 2.3]{AFT97}. We say $E\subset\R^n$ is a set of finite perimeter with respect to an origin-symmetric convex body $K\subset\R^n$ if $P_{K}(E,\R^n) < + \infty,$
where
\begin{equation}
    P_{K}(E,\R^n) :=\sup\left\{\int_E\operatorname{div} \sigma(x)dx:\sigma\in C_0^1(\R^n,\R^n) \text{ and } h_K(\sigma)\leq 1\right\}.
\end{equation}
Then, the following inequality holds:
\begin{equation}
\label{eq:min}
P_{K}(E,\R^n) \geq n\vol(K)^\frac{1}{n}\left(\vol(E)\right)^\frac{n-1}{n}.
\end{equation}
In the case when $E$ is a convex body, \eqref{eq:min} is nothing but Minkowski's first inequality for mixed volumes. Finally, we have the coarea formula for any nonnegative function $u\in W_{0}^{1,p}(\R^n)$: 
\begin{equation}
    \int_{\Omega} h_K(\nabla u(x)) d x=\int_0^{\infty} P_K\left(\{x \in \Omega: u(x)>s\}, \mathbb{R}^n\right) d s.
    \label{eq:coarea}
\end{equation}
It also holds for functions bounded variation; the following is implied by Proposition 3.15 and Theorem 3.40 in \cite{AFP}.
\begin{lemma}
    Let $f\in \operatorname{BV}(\R^n)$ and $h$ a real-valued, $1$-homogeneous function on $\R^{n}\setminus\{0\}$. Then, 
    \begin{equation}
    \label{eq:coarea_BV}
        \int_{\R^n}h(\eta_f(y))d|Df|(y) = \int_0^{\|f\|_{L^\infty(\R^n)}}\int_{\{x\in\R^n:f(x)=t\}} h(\eta_f(y))d\mathcal{H}^{n-1}(y)dt.
    \end{equation}
\end{lemma}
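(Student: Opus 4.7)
The plan is to deduce the identity from the general BV coarea formula applied to the scalar integrand $g = h \circ \eta_f$, followed by the structural identification of the level sets $\{f=t\}$ with the reduced boundaries $\partial^* E_t$ of the superlevel sets $E_t := \{x \in \R^n : f(x) > t\}$.

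First, the polar decomposition of the vector measure $Df$, established in Proposition 3.15 of \cite{AFP}, gives $Df = \eta_f \, |Df|$ with $\eta_f : \R^n \to \mathbb{S}^{n-1}$ a $|Df|$-measurable Radon-Nikodym derivative. Consequently, $g(y) := h(\eta_f(y))$ is a bounded Borel function on the support of $|Df|$, and the coarea formula for BV functions with an integrable Borel weight (Theorem 3.40 of \cite{AFP}) yields
\begin{equation*}
\int_{\R^n} h(\eta_f)\,d|Df| = \int_{\R} \left( \int_{\partial^* E_t} h(\eta_f)\,d\mathcal{H}^{n-1} \right) dt.
\end{equation*}

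Next, for Lebesgue-a.e. $t \in \R$, the set $E_t$ has finite perimeter, so by De Giorgi's structure theorem $|D\chi_{E_t}| = \mathcal{H}^{n-1}\llcorner \partial^* E_t$; the reduced boundary $\partial^* E_t$ coincides with the topological level set $\{x : f(x) = t\}$ up to an $\mathcal{H}^{n-1}$-null set; and the measure-theoretic inner normal $\eta_{\chi_{E_t}}$ agrees with $\eta_f$ on $\partial^* E_t$, since both point in the measure-theoretic direction of increase of $f$ at a reduced-boundary point. Substituting these identifications yields the right-hand side of \eqref{eq:coarea_BV}, after trimming the $t$-integration to $(0, \|f\|_{L^\infty(\R^n)})$: indeed $E_t = \emptyset$ for $t \geq \|f\|_{L^\infty(\R^n)}$, so $\partial^*E_t = \emptyset$, while the statement implicitly takes $f \geq 0$ in its intended applications (so the contribution from $t < 0$ is empty as well), or else the identity is applied to $|f|$.

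The principal obstacle is the pointwise identification $\eta_f = \eta_{\chi_{E_t}}$ on $\partial^* E_t$ for a.e.\ $t$; this is not immediate from either Proposition 3.15 or Theorem 3.40 of \cite{AFP} in isolation, but is a consequence of their combination together with the jump-set and Cantor-part decomposition of $Df$. Once this identification is in place, the $1$-homogeneity of $h$ is used only to ensure that the integrand is well-defined on the unit-vector range of $\eta_f$; no convexity or continuity assumption on $h$ is required beyond Borel measurability, and the rest of the argument is a routine application of Fubini and De Giorgi's structure theorem.
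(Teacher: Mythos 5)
Your proposal is correct and follows essentially the same route as the paper, which simply derives the identity from the polar decomposition $Df=\eta_f\,|Df|$ and the BV coarea formula (Proposition 3.15 and Theorem 3.40 of the cited Ambrosio--Fusco--Pallara reference); your write-up merely makes explicit the normal identification $\eta_f=\eta_{\chi_{E_t}}$ on $\partial^\star E_t$ and the restriction of the $t$-range, both of which are the standard consequences the paper implicitly invokes.
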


\subsection{Results from the theory of $m$th-order convex bodies}
We are now ready to discuss concepts from the recent $m$th-order theory of convex bodies, which were introduced in \cite{HLPRY25,HLPRY25_2}. Each of our operators will feature $h_Q^p$ as a type of kernel. For future investigations, here we prove a smoothness lemma for this kernel.
\begin{lemma}
\label{l:kernel_smooth}
Fix $n,m\in \N$ and $p\geq 1$. Let $Q\in\conbodo[m]$. Let $R_Q$ be the radius of the smallest centered Euclidean ball containing $Q$. Then, for every $y\in \R^{nm}$:
            \begin{enumerate}
                \item[(a).] $|h_Q(y^t\xi)^p-h_Q(y^t\eta)^p| \leq pR_Q\max\{h_Q(y^t\xi)^{p-1},h_Q(y^t\eta)^{p-1}\}|y||\xi-\eta|$ for all $\xi,\eta \in \Rn$.
                \item[(b).] $|h_Q(y^t\xi)| \leq R_Q|y||\xi|$ for all $\xi \in \Rn$.
                \item[(c).] 
                For every $\xi \neq o$, there is an open set $A(\xi) \subseteq \Rnhi$ satisfying $\Vol(\R^{nm}\setminus A(\xi))=0$ and, if $y \in A(\xi)$, then the function $\eta\mapsto h_Q(y^t\eta)^p$ is $C^1$ in a neighborhood of $\xi$.
                \item[(d).] $|\nabla_\xi h_Q(y^t\xi)^p| \leq pR_Qh_Q(y^t\xi)^{p-1}|y|$ for $y \in A(\xi)$ and $\xi \neq o$.

            \end{enumerate}
\end{lemma}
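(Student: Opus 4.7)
My plan is to handle the four parts in the order $(b), (a), (d), (c)$: $(b)$ is essentially the containment $Q\subseteq R_Q\B[m]$, $(a)$ follows from $(b)$ by an MVT-style inequality, and $(c)$--$(d)$ depend on understanding where $h_Q$ fails to be smooth. For $(b)$: the containment $Q\subseteq R_Q\B[m]$ yields $h_Q(z)\le R_Q|z|$ on $\R^m$; regarding $y\in\Rnhi$ as an $n\times m$ matrix with $|y|$ its Frobenius (= Euclidean) norm, $|y^t\xi|\le |y||\xi|$, and nonnegativity is from $0\in Q$. For $(a)$: sublinearity of $h_Q$ gives $|h_Q(u)-h_Q(v)|\le\max\{h_Q(u-v),h_Q(v-u)\}\le R_Q|u-v|$ via $(b)$; combining with the elementary inequality
\[
|a^p-b^p|\le p\max(a,b)^{p-1}|a-b|=p\max(a^{p-1},b^{p-1})|a-b|,\qquad a,b\ge 0,\ p\ge 1,
\]
(obtained from the MVT applied to $t\mapsto t^p$), applied at $u=y^t\xi$ and $v=y^t\eta$, proves the bound.

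For $(c)$: let $N\subseteq\R^m$ denote the set where $h_Q$ fails to be differentiable. As the singular set of a convex function, $N$ has Lebesgue measure zero; moreover, being the singular set of a support function, $N$ is contained in a countable union of lower-dimensional rectifiable pieces parameterized by the positive-dimensional faces of $Q$, so $\overline N$ still has Lebesgue measure zero. Define
\[
A(\xi) := \{y\in\Rnhi : y^t\xi\notin\overline N\}.
\]
$A(\xi)$ is open as the preimage of the open set $\R^m\setminus\overline N$ under the continuous map $\Phi_\xi(y):=y^t\xi$; since $\xi\neq o$, $\Phi_\xi\colon\Rnhi\to\R^m$ is a linear surjection, so by choosing adapted coordinates, $\Phi_\xi^{-1}(\overline N)$ has Lebesgue measure zero, giving $\Vol(\Rnhi\setminus A(\xi))=0$. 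For $y\in A(\xi)$, $y^t\xi$ lies in the open set $\R^m\setminus\overline N$, on which $h_Q$ is $C^1$ (a convex function is $C^1$ on any open set of differentiability); by continuity of $y^t$, there is an open neighborhood $U\ni\xi$ with $y^t(U)\subseteq\R^m\setminus\overline N$, so $\eta\mapsto h_Q(y^t\eta)$ is $C^1$ on $U$. Its $p$th power is then $C^1$ on $U$ as well, since $t\mapsto t^p$ is $C^1$ on $[0,\infty)$ for $p\ge 1$.

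For $(d)$: on $A(\xi)$ the chain rule yields
\[
\nabla_\xi[h_Q(y^t\xi)^p] = p\,h_Q(y^t\xi)^{p-1}\,y\nabla h_Q(y^t\xi)\in\R^n,
\]
and since $\nabla h_Q(y^t\xi)\in\partial h_Q(y^t\xi)\subseteq Q\subseteq R_Q\B[m]$, we have $|\nabla h_Q(y^t\xi)|\le R_Q$, whence $|y\nabla h_Q(y^t\xi)|\le R_Q|y|$. The subtle point is the claim in $(c)$ that $\overline N$, and not merely $N$, has Lebesgue measure zero: this does not follow from the a.e.\ differentiability of convex functions in general, but uses the structure of support functions (the singular directions are controlled by the face decomposition of $Q$; equivalently, the set of outer normals at which $Q$ has a non-unique supporting hyperplane is $\sigma$-finite with respect to $\mathcal{H}^{m-1}$). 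A careful citation to Schneider's convex bodies monograph, or a short direct verification splitting into polytopal and strictly convex cases and passing to general $Q$ by a selection argument, is where the writing-out of the proof requires the most care.
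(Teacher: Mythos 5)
Your proof follows the same overall strategy as the paper's for items (a), (b), and (c): sublinearity of $h_Q$ combined with the pointwise Lipschitz estimate on $t\mapsto t^p$ for (a), specialization for (b), and pulling a negligible bad set back through the linear surjection $T_\xi(y)=y^t\xi$ for (c). For (d) you diverge from the paper. The paper expands $h_Q(y^t(\xi+tu))^p$ to first order, bounds the difference quotient using (a), and lets $t\to 0^+$ before taking a supremum over $u\in\s$; you instead differentiate directly with the chain rule and use that $\nabla h_Q(z)\in Q\subseteq R_Q\B[m]$ at any point of differentiability, so that $|\nabla_\xi h_Q(y^t\xi)^p| = p\,h_Q(y^t\xi)^{p-1}|y\nabla h_Q(y^t\xi)|\le pR_Q h_Q(y^t\xi)^{p-1}|y|$. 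Your route for (d) is shorter, avoids the limiting argument entirely, and is the more standard convex-geometric computation; both give the same bound.

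On (c) you have correctly flagged the genuine subtlety, but your proposed repair does not close it. The paper declares $S_Q$ to be ``the set of measure zero upon which $h_Q$ is not $C^1$'' and simultaneously treats $A(\xi)^c=T_\xi^{-1}(S_Q)$ as closed; for both properties to hold one needs $\overline N$ to be null, where $N$ is the non-differentiability set of $h_Q$ (if $S_Q=N$ it is null but not closed; if $S_Q$ is the complement of the maximal open set where $h_Q$ is $C^1$, it is closed but equals $\overline{N}$). Your justification, that $N$ is contained in a countable union of normal cones of positive-dimensional faces of $Q$ and hence has null closure, fails for general $Q\in\conbodo[m]$. In $\R^2$, take a convex body whose surface area measure is purely atomic with atoms at a countable dense subset of an arc of $S^1$ (such a body exists by the planar Minkowski existence theorem); then $N$ is a dense countable union of rays in the cone over that arc, and $\overline N$ is the entire cone, which has positive measure. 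In dimensions $m\ge 3$ the positive-dimensional faces of $Q$ need not even be countable (a cylinder already has uncountably many edges), though there the union of normal cones still happens to be low-dimensional. So neither your argument nor the paper's secures the openness of a full-measure $A(\xi)$ for arbitrary $Q\in\conbodo[m]$. The downstream uses in Lemma~\ref{l:smoothness} are not actually endangered: part (ii) already assumes $Q\in\mathcal{C}^{n,m}$, and the dominated-convergence step in part (i) uses only that $A(\xi)$ has full measure, not that it is open. You were right to identify this as the delicate point; the resolution is either to drop ``open'' from the statement of (c) (which suffices for the applications) or to restrict the class of $Q$, not to claim $\overline N$ is null.
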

\begin{proof}
    {\bf Proof of item a. }Let $\|y^t\|_{\text{op}}$ be the operator norm of the matrix $y^t:\R^n\to \R^m$. Then, 
            \[
            |y^t(\xi-\eta)| \leq \|y^t\|_{\text{op}}|\xi-\eta| \leq |y||\xi-\eta|.
            \]
Next, we use the fact that $h_Q$ is sublinear, since it is convex and $1$-homogeneous, to obtain
            \begin{equation}
            \label{eq:sublinear}
            \begin{split}
           |h_{Q}(y^t\xi)-h_{Q}(y^t\eta)|&\leq h_{Q}(y^t(\xi-\eta)) = \max_{z\in Q}\langle y^t(\xi-\eta),z\rangle 
           \\
           &\leq |z||y^t(\xi-\eta)| \leq R_Q  |y||\xi-\eta| .
           \end{split}
            \end{equation}
            Recall that the function $t\mapsto t^p$ is locally Lipschitz: for every $t_1,t_2>0$, one has
            \begin{equation}
            \label{eq:lipschitz}
            |t_1^p-t_2^p| \leq p\cdot \max\{t_1^{p-1},t_2^{p-1}\}\cdot |t_1-t_2|.
            \end{equation}
            Applying \eqref{eq:lipschitz} to $t_1 = h_Q(y^t\xi)$ and $t_2=h_Q(y^t\eta)$, we have by \eqref{eq:sublinear}
            \begin{align*}
            |f(y,\xi)-f(y,\eta)| &\leq p\cdot \max\{h_Q(y^t\xi)^{p-1},h_Q(y^t\eta)^{p-1}\}\cdot |h_Q(y^t\xi)-h_Q(y^t\eta)|
            \\
            &\leq  p\cdot \max\{h_Q(y^t\xi)^{p-1},h_Q(y^t\eta)^{p-1}\}\cdot R_Q  |y||\xi-\eta|.
            \end{align*}
            We have shown item (a).

            {\bf Proof of item b. } Item $(b)$ follows from $(a)$ by picking $p=1$ and $\eta=o$.

             {\bf Proof of item c. } As for item (c), since $Q\in\conbodo[m]$, $h_Q$ is $C^1$ almost everywhere. Let us denote by $S_Q\subset \R^m$ the set of measure zero upon which $h_Q$ is not $C^1$. For $\xi\neq o$, we define the surjective linear map $T_\xi:\R^{nm}\to\R^m$ by $T_\xi (y)=y^t\xi$. We define the closed set $A(\xi)^c$ to be the pre-image of $S_Q$ in $\R^{nm}$:
            \[
            A(\xi)^c :=T_{\xi}^{-1}(S_Q)= \left\{y\in \R^{nm}:y^t\xi\in S_Q\right\}.
            \]
            The pre-image of a set of measure zero under a surjective linear map from a higher-dimensional space to a lower-dimensional space is also measure zero. Defining $A(\xi):=\R^{nm}\setminus A(\xi)^c$ concludes the proof of item (c).

            {\bf Proof of item (d).} For $(d)$, we use that $h_{Q}$ is $C^1$ near $\xi$ when $\xi\neq o$ and $y\in A(\xi)$ to write, for $t>0$ small and $u\in\s$,
            \[
            h_Q(y^t(\xi+tu))^p=h_Q(y^t\xi)^p+t\langle \nabla h_Q(y^t\xi)^p,u \rangle +o(t)
            \]
            where $o$ is a function so that $o(t)/t \to 0$ as $t\to 0$. Therefore, from item (a),
            \begin{align*}
            \left|\langle \nabla h_Q(y^t\xi)^p,u \rangle +\frac{o(t)}{t}\right| &= \left|\frac{h_Q(y^t(\xi+tu))^p-h_Q(y^t\xi)^p}{t}\right| 
            \\
            &\leq pR_Q\max\{h_Q(y^t\xi)^{p-1},h_Q(y^t(\xi+tu))^{p-1}\}|y|.
            \end{align*}
            Sending $t\to 0$, we obtain
            \[
            |\langle \nabla h_Q(y^t\xi)^p,u\rangle| \leq p\cdot R_Q \cdot h_Q(y^t\xi)^{p-1}\cdot |y|.
            \]
            Taking supremum over all $u \in \s$, we obtain $|\nabla h_Q(y^t\xi)^p| \leq pR_Qh_Q(y^t\xi)^{p-1}|y|$, as claimed. 
\end{proof}
 
We now define our operators.

\begin{definition} \label{d:generalprojectionbody} Let $p \geq 1$, $m \in \N$, and fix $Q\in \conbodo[m]$. Given $K\in\conbodio[n]$, we define the $(L^p, Q)$- polar projection body of $K$, denoted $\PP K \in\conbodo[nm]$, to be the convex body whose gauge is given by 
\[
\|u\|_{\PP K} = \left(\int_{\s} h_Q(u^tv)^p d\sigma_{K,p}(v)\right)^\frac{1}{p}, \quad u\in\S.
\]
\end{definition}
This extends the $m=1$ cases from \cite{petty61_1,LYZ00,TW12,HS09}. We take a moment to discuss in more detail the $p=1$ case. By using the Gauss map, we can write
\[
\|\theta\|_{\Pi^\circ_{Q,1} K} = \int_{\partial K} h_Q(\theta^tn_K(y)) d\mathcal{H}^{n-1}(y).
\]

Using some aforementioned tools from the theory of functions of bounded variation, we can extend $\Pi^\circ_{Q,1}$ to sets of finite perimeter. A Borel set $D\subset \Rn$ is said to be \textit{a set of finite perimeter} if $\chi_D \in \operatorname{BV}(\Rn)$. Examples include convex bodies and compact sets with $C^1$ boundary. Given such a set $C$, we define its $m$th-order polar projection body with respect to $Q$ via the gauge
\begin{equation}\|\theta\|_{\Pi^\circ_{Q,1} C} = \int_{\partial^\star C} h_Q(\theta^tn_C(y) )d\mathcal{H}^{n-1}(y), \quad \theta\in\S,
\label{eq:polar_projection_finite_perim}
\end{equation}
where $\partial^\star C$ is the reduced boundary of $C$ and $n_C:=-\eta_{\chi_C}$ is its measure-theoretic outer-unit normal, see e.g. \cite[Definition 2.2]{TW12}.

We also require the following centroid body introduced in \cite{HLPRY25_2}: for a compact subset $L \subset \R^{nm}$ of positive volume the convex body $\Gamma_{Q,p} L \subset \R^n$ is the $m$th-order centroid body of $L$ given by 
\[
h_{\Gamma_{Q,p}L}(v) =\left( \frac{1}{\Vol(L)} \int_L h_Q(x^tv)^p dx\right)^\frac{1}{p}. 
\]
If, in addition, we assume that $L$ is star-shaped about the origin, then we have from polar coordinates
\begin{equation}
\label{eq:centroid_polar}
h_{\Gamma_{Q,p}L}(v) =\left(\frac{1}{nm+p}\frac{1}{\Vol(L)} \int_{\S} \|\theta\|_L^{-nm-p} h_Q(\theta^t v)^p \theta\right)^\frac{1}{p}. 
\end{equation}
Let $T\in \operatorname{GL}(n)$. We introduce the notation, for $x\in \R^{nm}$,
\[
\bar T x = (Tx_1,\dots,Tx_m),
\]
which is nothing but the matrix multiplication of the $n\times n$ matrix $T$ and the $n\times m$ matrix $x$ whose columns are the vectors $x_i$. Then, the centroid body operator $\Gamma_{Q,p}$ has the following invariance:
\[
\Gamma_{Q,p} (\bar TL) = T\Gamma_{Q,p}L.
\]

The following Busemann-Petty inequality for the $m$th-order-centroid body was shown \cite[Theorem 1.6]{HLPRY25_2}: for a compact open set $L\subset\R^{nm}$ with positive volume, $p\geq 1$, and $Q\in\conbodo[m]$, it holds
\begin{equation}
\label{eq:BPcLQ}
\frac{\vol(\Gamma_{Q,p}  L)}{\Vol(L)^{\frac{1}{m}}} \geq \frac{\vol(\Gamma_{Q,p} \Pi^{\circ}_{Q,p}\B)}{\Vol(\Pi^\circ_{Q,p}\B)^{\frac{1}{m}}},\end{equation}
with equality if and only if $L=\Pi^{\circ}_{Q,p} E$ for any origin symmetric ellipsoid $E\in\conbodio[n].$ Note that $$\Gamma_{Q,p}\Pi^\circ_{Q,p}\B =  \left(\frac{m}{(nm+p)\omega_n}\right)^{\frac{1}{p}}\B$$ for all $Q\in\conbodo[m]$ (see \cite[Lemma 3.12]{HLPRY25_2}), but $\Pi^\circ_{Q,p}\B$ is not necessarily a dilate of $B^{nm}_2$ (e.g. when $m\geq 2,p=1$, and $Q$ is a simplex).

For $m\in \N$ and $Q\in\conbodo[m]$, we define the $m$th-order polar projection body $\Pi^\circ_{Q,p} f \subset \R^{nm}$ of a function $f$ in $W^{1,p}(\R^n)$ (or $BV(\R^n)$ if $p=1$) by the gauge $\|\theta\|_{\Pi^\circ_{Q,p} f} := \Lp$. This set contains the origin within in its interior. Furthermore, we have the equality 
\begin{equation}
\label{eq:better_energy}
\Ep f = d_{n,p}(Q) (nm)^{-\frac{1}{nm}} \Vol(\Pi^\circ_{Q,p} f)^{-\frac{1}{nm}}. 
\end{equation}
Next, we require the next regularity lemma for $ \Gamma_{Q,p}\Pi^\circ_{Q,p} f$. 

\begin{lemma}\label{l:smoothness} Let $p >1$, $m,n \in \N$, and let $f \in W_{0}^{1,p}(\R^n)$. Define the function $h(z) := h_{\Gamma_{Q,p}\Pi^\circ_{Q,p} f}(z)$.
Then:
\begin{itemize}
    \item[(i)] If $Q \in \conbodio[m]$, then $h$ belongs to the class $C^1(\R^n)$.
\item[(ii)] If $Q \in \mathcal{C}^{n,m}$, then $h \in C^2(\R^n \setminus \{0\})$. 
\end{itemize} 
    
\end{lemma}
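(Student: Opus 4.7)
The plan is to work with $G(v) := h(v)^p$ rather than $h$ itself, since $h = G^{1/p}$ with $G$ bounded below by a positive constant on any compact subset of $\R^n \setminus \{0\}$, so the regularity of $h$ away from the origin follows from that of $G$ via the chain rule. The definition of the $m$th-order centroid body gives the useful representation
\[
G(v) = \frac{1}{\Vol(\Pi^\circ_{Q,p} f)} \int_{\Pi^\circ_{Q,p} f} h_Q(x^t v)^p \, dx,
\]
which exhibits $G$ as an integral over a bounded subset of $\R^{nm}$ of the $p$-homogeneous kernel $(x,v) \mapsto h_Q(x^t v)^p$. The approach is to differentiate $G$ under the integral sign, using Lemma~\ref{l:kernel_smooth} (for part (i)) and the $\mathcal{C}^{n,m}$ hypothesis (for part (ii)) to verify pointwise smoothness of the kernel in $v$ and to produce integrable $v$-uniform dominants for the partial derivatives in $v$.

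For part (i), I would fix $v_0 \in \R^n \setminus \{0\}$. Lemma~\ref{l:kernel_smooth}(c) provides a full-measure set of $x$ on which $v \mapsto h_Q(x^t v)^p$ is $C^1$ in a neighborhood of $v_0$, and items (b) and (d) of that lemma yield the bound
\[
|\nabla_v h_Q(x^t v)^p| \leq p R_Q\, h_Q(x^t v)^{p-1}\, |x| \leq p R_Q^p\, |x|^p\, |v|^{p-1}.
\]
On a small neighborhood of $v_0$ bounded away from the origin, this is an $x$-dependent, $v$-uniform integrable dominant on the bounded set $\Pi^\circ_{Q,p} f$, so the Leibniz rule shows $G$ is differentiable at $v_0$. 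Since $G$ is convex, as an integral of the convex kernels $v \mapsto h_Q(x^t v)^p$, differentiability at every $v \neq 0$ automatically gives $G \in C^1(\R^n \setminus \{0\})$, whence $h = G^{1/p}$ lies in $C^1(\R^n \setminus \{0\})$.

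For part (ii), the $\mathcal{C}^{n,m}$ hypothesis guarantees $v \mapsto h_Q(x^t v)^p$ is $C^2$ on $\R^n \setminus \{0\}$ for every $x \in \R^{nm}$, with
\[
D_v^2 h_Q(x^t v)^p = p(p-1)\, h_Q(x^t v)^{p-2}\, \nabla_v h_Q(x^t v) \otimes \nabla_v h_Q(x^t v) + p\, h_Q(x^t v)^{p-1}\, D_v^2 h_Q(x^t v).
\]
On a compact set $K \Subset \R^n \setminus \{0\}$ containing $v_0$, the scaling $|D_v^2 h_Q(x^t v)| \lesssim |x|/|v|$ (from $1$-homogeneity of $h_Q(x^t\cdot)$ in $x$ and $(-1)$-homogeneity of its Hessian in $v$, together with joint continuity of the Hessian on compact sets) combined with $h_Q(x^t v) \leq R_Q|x||v|$ bounds the second term by $C|x|^p$, which is integrable over $\Pi^\circ_{Q,p} f$. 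For the first term, $|\nabla_v h_Q(x^t v)| \leq R_Q|x|$ and the lower bound $h_Q(z) \geq r|z|$ (available since $Q \in \conbodio[m]$) yield the dominant $C|x|^2 |x^t v|^{p-2}$. For $p \geq 2$, $|x^t v|^{p-2}$ is bounded on $K \times \Pi^\circ_{Q,p} f$, so integrability is immediate. For $1 < p < 2$, decomposing each column of $x$ into components along $v/|v|$ and orthogonal to $v$ reduces the question to the finiteness of $\int |a|^{p-2} da$ on a bounded region of $\R^m$, which holds exactly when $p > 2 - m$, a condition satisfied for every $p > 1$ and $m \geq 1$. The elementary estimate $|x^t v| \geq |x^t v_0| - \varepsilon|x|$ for $v$ near $v_0$ then converts the pointwise integrability into a $v$-uniform integrable dominant, after which the Leibniz rule yields $G \in C^2(\R^n \setminus \{0\})$ and therefore $h \in C^2(\R^n \setminus \{0\})$.

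The main obstacle will be the subquadratic regime $1 < p < 2$ in part (ii): the factor $h_Q(x^t v)^{p-2}$ in the second derivative of the kernel is singular along the $m$-codimensional subspace $\{x \in \R^{nm} : x^t v = 0\}$, so the domination hypothesis requires controlling the integrability of this singularity uniformly in $v$ near $v_0$, which is precisely why the coordinate decomposition above is essential.
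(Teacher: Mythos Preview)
Your approach is essentially the paper's: both pass to $G=h^p$, write it as an integral of the kernel $h_Q(y^t\cdot)^p$ over a bounded region (you integrate over the body $\Pi^\circ_{Q,p}f$, the paper uses the equivalent polar-coordinate form over $\S$ with weight $\Lp^{-nm-p}$), and then differentiate under the integral sign via Lemma~\ref{l:kernel_smooth} for (i) and the $\mathcal{C}^{n,m}$ hypothesis for (ii). If anything, your explicit treatment of the singular factor $h_Q(x^tv)^{p-2}$ in the range $1<p<2$ is more careful than the paper, which simply records the second-derivative formula and asserts that the Leibniz rule applies; the only point to tighten is that your estimate $|x^tv|\ge |x^tv_0|-\varepsilon|x|$ does not by itself yield a $v$-uniform integrable dominant (it gives nothing on the region $|x^tv_0|\le\varepsilon|x|$), so a rotation/change-of-variables argument is the cleaner way to obtain uniformity there.
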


       \begin{proof}  We proceed  first with item (i), that is, we begin by showing that our function is of class $C^1(\R^n)$.   For ease, we set $L=\Gamma_{Q,p}\Pi^\circ_{Q,p} f$. We first use \eqref{eq:centroid_polar} to write, for $z\in \R^n,$
       \begin{equation}
\label{e:map}
h(z)=\left(\frac{1}{nm+p}\frac{1}{\Vol(L)} \int_{\S} \Lp^{-nm-p} h_Q(\theta^tz)^p d\theta\right)^\frac{1}{p}, \quad z \in \R^n.
\end{equation}
It suffices to show that the map in \eqref{e:map} is $C^1$ outside the origin. Define the function $g(y,\xi):=h_Q(y^t\xi)^p$
            From Lemma~\ref{l:kernel_smooth}, item (a) we deduce immediately that 
            \[
            G(\xi):=\int_L g(y,\xi) dy, \quad \xi \in \R^n
            \]
            is continuous in $\Rn$. Fix $\xi \neq o$ and set $Q(\xi) = \int_L \nabla_\xi g(y,\xi) dy$ (which is well-defined by Lemma~\ref{l:kernel_smooth}, item $(c)$). For any sequence $\eta_j \to o$ in $ \Rn$,
            \begin{align*}\big| G(\xi+\eta_j) - G(\xi) &-\langle Q(\xi),\eta_j\rangle\big| \leq
            \int_L |g(y,\xi+\eta_j) - g(y,\xi) - \langle\nabla_\xi g(y,\xi),\eta_j\rangle| dy.\end{align*}
            By Lemma~\ref{l:kernel_smooth}, item $(c)$, the integrand tends to $0$ for a.e $y \in \Rnhi$. By Lemma~\ref{l:kernel_smooth}, items $(b)$ and $(d)$, we may apply the dominated convergence theorem to deduce that the last integral tends to $0$. Since this happens for every sequence $\eta_j,$ we deduce that $G$ is differentiable at $\xi$ and its differential is $Q(\xi)$. 
            
            Next, let $\xi_j \to \xi$. We compute:
            \[|Q(\xi_j) - Q(\xi)| \leq \int_L |\nabla_\xi g(y, \xi_j) - \nabla_\xi g(y, \xi)|d y.\]
            Again by Lemma~\ref{l:kernel_smooth}, item $(c)$, the integrand tends to $0$ for a.e. $y \in \Rnhi$. By Lemma~\ref{l:kernel_smooth}, Item $(d)$, we may apply the dominated convergence theorem and we obtain that $Q(\xi_j) \to Q(\xi)$ showing that $G$ is $C^1$.  Finally, an application of the chain rule gives the desired result proves that the map in \eqref{e:map} is of class $C^1(\R^n)$.

We now prove item (ii). Assume that $Q \in \mathcal{C}^{n,m}$. Fix  any $\xi \in \R^n \setminus \{0\}$. Define the set $A_\xi = \{\theta \in \S \colon \theta^t \xi = o\}$  and note that $A_\xi$ is a set of measure zero in $\S$ (its dimension is strictly smaller than that of $\S$ since $\xi \neq o$). Assume that $\theta \in \S$ is such that $\theta^t\xi \neq o$. Then two direct computations show that, for each $j,k =1,\dots, n$,
 \begin{align*}
&\frac{\partial}{ \partial \xi_j}h_Q(\theta^t\xi)^p = ph_Q(\theta^t\xi)^{p-1}\sum_{\ell=1}^m\partial_\ell h_Q(\theta^t\xi)\theta_{\ell,j}
\end{align*}
and
\begin{align*}
\frac{\partial^2}{ \partial \xi_j \partial \xi_k}h_Q(\theta^t\xi)^p = p h_Q(\theta^t\xi)^{p-2} \sum_{\ell,\omega=1}^m \left((p-1)\partial_\ell h_Q(\theta^t\xi) \partial_\omega h_Q(\theta^t\xi)+h_Q(\theta^t\xi)\partial_{\ell,\omega} h_Q(\theta^t\xi) \right) \theta_{\ell,j}\theta_{\omega,k}.
 \end{align*}
 where $\partial_\ell$ and $\partial_{\ell\omega}$ denote the first and second order partial derivatives, and $\theta_{\ell,j}$ denotes the $j$th component of $\theta_\ell$. 
Each of the above functions are continuous by the assumption that $Q \in \mathcal{C}^{n,m}$.

Finally, since $Q \in \mathcal{C}^{n,m}$, we can differentiate underneath the integral sign and apply the Leibniz rule twice to obtain
\begin{align*}
&\frac{\partial^2}{\partial \xi_k \partial \xi_j} h(\xi)^p = \frac{1}{nm+p}\frac{1}{\Vol(L)} \frac{\partial^2}{\partial \xi_k \partial \xi_j}\left(\int_{\S} \Lp^{-nm-p} h_Q(\theta^tz)^p d\theta\right)\\
&=\frac{1}{nm+p}\frac{1}{\Vol(L)} \frac{\partial^2}{\partial \xi_k \partial \xi_j}\left(\int_{\S \setminus A_\xi} \Lp^{-nm-p} h_Q(\theta^tz)^p d\theta\right)\\
&=\frac{1}{nm+p}\frac{1}{\Vol(L)}  \int_{\S\setminus A_\xi} \Lp^{-nm-p} \frac{\partial^2}{\partial \xi_k \partial \xi_j}[h_Q(\theta^t\xi)^p] d\theta
\\
&=\frac{1}{nm+p}\frac{1}{\Vol(L)} \int_{\S\setminus A_\xi}\Lp^{-nm-p}ph_Q(\theta^t\xi)^{p-2}
\\
&\quad\quad\quad \times\left(\sum_{\ell,\omega=1}^m \left((p-1)\partial_\ell h_Q(\theta^t\xi) \partial_\omega h_Q(\theta^t\xi)+h_Q(\theta^t\xi)\partial_{\ell,\omega} h_Q(\theta^t\xi) \right) \theta_{\ell,j}\theta_{\omega,k}\right) d\theta,
\end{align*}
as required.

\end{proof}

Next, we analyze the energy $\Ep$. Firstly, the next lemma will be handy for our purposes, whose proof is a standard modification of the proof of \cite[Theorem~2.1]{LM24_3}.

\begin{lemma}\label{l:lsc} Let $\Omega$ be a bounded, open set in $\R^n$, $Q \in \mathcal{C}^{n,m}$, and $p > 1$. Then the function $\Ep \colon W_0^{1,p}(\Omega) \to [0,\infty)$ is weakly lower semincontinuous, i.e., if $f_k \to f$ weakly in $W_0^{1,p}$, then 
\[
\Ep f \leq \liminf_{ k \to \infty} \Ep f_k. 
\]
\end{lemma}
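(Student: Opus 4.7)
The plan is to deploy the classical recipe for weak lower semicontinuity of convex functionals on reflexive Banach spaces: once $\Ep$ is established to be both convex and strongly continuous on $W_0^{1,p}(\Omega)$, an application of Mazur's lemma yields the claim.

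\textbf{Convexity and strong continuity.} For each fixed $\theta \in \S$, the subadditivity of $h_Q$ and Minkowski's inequality in $L^p$ make the map $g \mapsto F_\theta(g) := \|h_Q(\theta^t \nabla g(\cdot))\|_{L^p(\R^n)}$ convex (indeed sublinear) on $W_0^{1,p}(\Omega)$. The convexity of $\Ep$ itself---equivalently, that $\Ep$ is a seminorm on $W_0^{1,p}(\Omega)$---is more delicate: via \eqref{eq:better_energy} it amounts to a Minkowski-type inequality for the volume of the $m$th-order polar projection body $\Pi^\circ_{Q,p} f$, which is established in the $m$th-order Brunn--Minkowski theory (see \cite{HLPRY25_2, LRZ24}). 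Combined with the bound $\Ep f \leq \||\nabla f|\|_{L^p(\R^n)}$ recalled in the paper, subadditivity together with positive $1$-homogeneity gives
\[
|\Ep f - \Ep g| \leq \Ep(f - g) \leq \|\nabla(f - g)\|_{L^p(\Omega)},
\]
so $\Ep$ is Lipschitz in the $W_0^{1,p}$ norm, in particular strongly continuous.

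\textbf{Mazur's argument.} Suppose $f_k \rightharpoonup f$ weakly in $W_0^{1,p}(\Omega)$. Extract a subsequence $\{f_{k_j}\}$ with $\Ep f_{k_j} \to L := \liminf_k \Ep f_k$; this subsequence still converges weakly to $f$. By Mazur's lemma there exist convex combinations $g_N = \sum_{j \geq N} c_j^N f_{k_j}$, with $c_j^N \geq 0$ and $\sum_j c_j^N = 1$, such that $g_N \to f$ strongly in $W_0^{1,p}(\Omega)$. Convexity yields
\[
\Ep g_N \leq \sum_{j \geq N} c_j^N \Ep f_{k_j},
\]
and because $\Ep f_{k_j} \to L$, the elementary bound $\bigl|\sum_{j \geq N} c_j^N \Ep f_{k_j} - L\bigr| \leq \sup_{j \geq N} |\Ep f_{k_j} - L|$ forces the right-hand side to converge to $L$ as $N \to \infty$. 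By strong continuity, $\Ep g_N \to \Ep f$, so $\Ep f \leq L = \liminf_k \Ep f_k$, as desired.

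\textbf{Main obstacle.} The technical core is the convexity/subadditivity of $\Ep$. Because $\Ep f = d_{n,p}(Q)(nm)^{-1/nm}\vol[nm](\Pi^\circ_{Q,p} f)^{-1/nm}$ is a negative power of a polar volume, subadditivity in $f$ cannot be extracted from the $\theta$-wise subadditivity of $F_\theta$ alone: for arbitrary positive functions $A,B$ on $\S$, the inequality $\bigl(\int_{\S}(A+B)^{-nm}d\theta\bigr)^{-1/nm} \leq \bigl(\int_{\S} A^{-nm}d\theta\bigr)^{-1/nm} + \bigl(\int_{\S} B^{-nm}d\theta\bigr)^{-1/nm}$ fails in general. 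Its resolution requires the full geometric structure of $\Pi^\circ_{Q,p}$ from the $m$th-order Brunn--Minkowski machinery; once this seminorm property is granted, the Mazur extraction and strong continuity portions of the proof are the standard modification of \cite[Theorem~2.1]{LM24_3} alluded to in the statement.
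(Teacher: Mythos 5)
Your plan hinges entirely on $\Ep$ being a convex functional (a seminorm) on $W_0^{1,p}(\Omega)$, which you then combine with Mazur's lemma. You correctly identify this convexity as the delicate point, but you then assert it is ``established in the $m$th-order Brunn--Minkowski theory'' and cite \cite{HLPRY25_2, LRZ24}. This assertion is false: $\Ep$ is \emph{not} subadditive, and no such Minkowski-type inequality for $\vol[nm](\Pi^\circ_{Q,p}f)^{-1/nm}$ holds in those references or elsewhere. A concrete counterexample already appears in the case $m=1$, $Q=[-\tfrac12,\tfrac12]$, $p=2$, $n=2$: for $f\in W_0^{1,2}$ one has $\mathcal{E}_{Q,2}f = c\,\det(M_f)^{1/(2n)}$ with $M_f=\int\nabla f\otimes\nabla f\,dx$, and if $f,g$ have disjoint supports with $M_f = \tfrac{\pi^2}{4}\mathrm{diag}(L,1/L)$ and $M_g=\tfrac{\pi^2}{4}\mathrm{diag}(1/L,L)$ (realized by, e.g., products of sines on long thin rectangles), then $\mathcal{E}_{Q,2}(f+g)=\tfrac{c\pi}{2}\sqrt{L+1/L}$ grows without bound in $L$ while $\mathcal{E}_{Q,2}f+\mathcal{E}_{Q,2}g=c\pi$ stays fixed. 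In fact, a $1$-homogeneous functional that is nonnegative, vanishes at $0$, and is not identically zero cannot be concave either; $\Ep$ is simply neither, and the Mazur argument is unavailable. (Your Lipschitz estimate $|\Ep f - \Ep g|\le\Ep(f-g)$ also relies on subadditivity, so the claimed strong continuity would need a different justification as well.)

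The route the paper actually takes---visible in the proof of Theorem~\ref{t:existence} and in the cited source \cite[Theorem~2.1]{LM24_3}---works $\theta$-by-$\theta$ and does \emph{not} require convexity of $\Ep$: for each $\theta\in\S$ the map $f\mapsto F_\theta(f):=\|h_Q(\theta^t\nabla f(\cdot))\|_{L^p(\R^n)}$ is a genuine seminorm (here your observation is correct), hence weakly lower semicontinuous, so $F_\theta(f)\le\liminf_k F_\theta(f_k)$ and therefore $\limsup_k F_\theta(f_k)^{-nm}\le F_\theta(f)^{-nm}$ pointwise in $\theta$. If $f\ne 0$, Rellich--Kondrachov gives $\|f_k\|_{L^p(\Omega)}\to\|f\|_{L^p(\Omega)}>0$, and Lemma~\ref{l:useful}(i) then yields the uniform bound $F_\theta(f_k)^{-nm}\le\bigl(c_0\inf_k\|f_k\|_{L^p}\bigr)^{-nm}$; the reverse Fatou lemma then gives $\limsup_k\int_{\S}F_\theta(f_k)^{-nm}\,d\theta\le\int_{\S}F_\theta(f)^{-nm}\,d\theta$, which after taking the $-1/nm$ power is exactly $\Ep f\le\liminf_k\Ep f_k$ (the case $f=0$ is trivial). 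You should replace the Mazur argument with this reverse-Fatou argument.
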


We will make frequent use of the following modified version of \cite[Theorem~5.1]{LRZ24}. 

\begin{lemma}\label{l:useful} Let $\Omega \subset \R^n$ be a bounded, open set, $Q \in\conbodio[m]$ and $p \geq 1$. Then the following hold: 
\begin{itemize}
    \item[(i)] there is a constant $c_0:=c_0(Q,p,\Omega) > 0$ such that for any $\theta\in \S$ and any $f \in C^1(\R^n)$, with support contained in $\Omega$,
    \[
    \|h_Q(\theta^t \nabla f(\cdot))\|_{L^p(\R^n)} \geq c_0 \|f\|_{L^p(\Omega)};
    \]
    \item[(ii)] there is an explicit constant $d_0:=d_0(Q,p,\Omega) > 0$ such that for any $f \in C^1(\R^n)$, with support contained in $\Omega$,
    \[
    \Ep f \geq d_0 \|f\|_{L^p(\Omega)}^{\frac{nm-1}{nm}} \||\nabla f|\|_{L^p(\Omega)}^\frac{1}{nm}.
    \]
\end{itemize}
\end{lemma}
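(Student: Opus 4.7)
The lemma has two parts. For item (i), the idea is to reduce to a one-dimensional Poincar\'e inequality using the fact that $Q\in\conbodio[m]$ contains a Euclidean ball. For item (ii), the plan is essentially to follow the argument of \cite[Theorem~5.1]{LRZ24}, interpolating between item (i) and a Fubini-type identity.

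For item (i), since $Q\supseteq rB_2^m$ for some $r>0$, we have $h_Q(y)\geq r|y|$ for every $y\in\R^m$. Fix $\theta\in\S$ and view it as an $n\times m$ matrix $(\theta_1,\dots,\theta_m)$ of Frobenius norm one. By pigeonhole, some column $\theta_{i_0}\in\R^n$ satisfies $|\theta_{i_0}|\geq m^{-1/2}$; set $e:=\theta_{i_0}/|\theta_{i_0}|\in \s$. Then pointwise,
\[
h_Q(\theta^t\nabla f(x))\geq r|\theta^t\nabla f(x)|\geq r|\theta_{i_0}\cdot\nabla f(x)|\geq rm^{-1/2}|\partial_e f(x)|.
\]
Taking $L^p$ norms and applying the one-dimensional Poincar\'e inequality in direction $e$ (valid because $f$ is compactly supported in the bounded set $\Omega$, so that each slice $t\mapsto f(te+y)$ vanishes outside an interval of length $\operatorname{diam}(\Omega)$) gives $\|\partial_e f\|_{L^p(\R^n)}\geq (2\operatorname{diam}(\Omega))^{-1}\|f\|_{L^p(\Omega)}$. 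This yields item (i) with explicit constant $c_0=r/(2\sqrt{m}\,\operatorname{diam}(\Omega))$.

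For item (ii), let $g(\theta):=\|h_Q(\theta^t\nabla f(\cdot))\|_{L^p(\R^n)}$, so that $\Ep f=d_{n,p}(Q)\bigl(\int_\S g(\theta)^{-nm}\,d\theta\bigr)^{-1/nm}$. The argument, following \cite[Theorem~5.1]{LRZ24} with the mild modifications required for $Q\in\conbodio[m]$, rests on two ingredients. First, the Fubini identity
\[
\int_\S g(\theta)^p\,d\theta \;=\; C_0(n,m,p,Q)\,\||\nabla f|\|_{L^p(\R^n)}^p,
\]
obtained by interchanging the order of integration and invoking the rotational invariance $\int_\S h_Q(\theta^tv)^p\,d\theta=C_0|v|^p$ (this invariance is proved by conjugating an ambient rotation $R\in O(n)$ by the block rotation $\bar R \in O(nm)$ acting columnwise and using that $\S$ is preserved). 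Second, the pointwise lower bound $g(\theta)\geq c_0\|f\|_{L^p}$ from item (i). Combining these via the standard $L^1$--$L^\infty$ interpolation
\[
\|g^{-1}\|_{L^{nm}(\S)}^{nm}\;\leq\;\|g^{-1}\|_{L^1(\S)}\,\|g^{-1}\|_{L^\infty(\S)}^{nm-1},
\]
one controls $\|g^{-1}\|_{L^\infty}$ by item (i), producing the factor $\|f\|_{L^p}^{(nm-1)/nm}$, while $\|g^{-1}\|_{L^1}$ must be bounded by $C\||\nabla f|\|_{L^p}^{-1}$ to produce the factor $\||\nabla f|\|_{L^p}^{1/nm}$. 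Substituting these into the definition of $\Ep f$ and tracking constants yields the explicit $d_0=d_0(Q,p,\Omega)$.

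\textbf{Main obstacle.} The crux of item (ii) is the $L^1$ bound $\int_\S g(\theta)^{-1}\,d\theta\leq C\||\nabla f|\|_{L^p(\R^n)}^{-1}$: a naive application of H\"older or Jensen's inequality gives the wrong direction (lower bounds on $\int g^{-1}d\theta$), and using only the uniform lower bound from item (i) merely recovers the weaker estimate $\Ep f\geq c\|f\|_{L^p}$. Following \cite{LRZ24}, one exploits the two-sided control $c_0\|f\|_{L^p}\leq g(\theta)\leq R_Q\||\nabla f|\|_{L^p}$ (upper bound via Lemma~\ref{l:kernel_smooth}(b)) together with the $L^p$ integral identity above, using the convex-geometric structure of $g$ as the support function of $\Pi_{Q,p}f$ to convert $L^p$-control into the required $L^1$-control of $g^{-1}$.
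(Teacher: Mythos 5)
Your argument for item (i) is correct, self-contained, and actually more explicit than what the paper does. Picking a column $\theta_{i_0}$ with $|\theta_{i_0}|\geq m^{-1/2}$, using $h_Q(y)\geq r|y|$ for $Q\supseteq rB_2^m$, and then invoking the one-dimensional Poincar\'e inequality along the direction $e=\theta_{i_0}/|\theta_{i_0}|$ gives a clean explicit constant. (Minor quibble: the sharp one-dimensional constant is $\operatorname{diam}(\Omega)^{-1}$, not $(2\operatorname{diam}(\Omega))^{-1}$, but your weaker constant is of course fine.) For the record, the paper does not reprove this: it simply cites \cite[Theorem~5.1]{LRZ24} for the origin-symmetric case and then reduces general $Q\in\conbodio[m]$ to $\conv(Q,-Q)$ via the two-sided inclusion $c\,\conv(Q,-Q)\subseteq Q\subseteq\conv(Q,-Q)$, absorbing the resulting factor into $c_0$ and $d_0$. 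So your route for (i) is a genuinely different, and arguably more instructive, argument.

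Item (ii), however, has a real gap, and it is not just an omission of routine detail. Your plan reduces to the intermediate claim
\[
\int_{\S} g(\theta)^{-1}\,d\theta \;\leq\; C\,\||\nabla f|\|_{L^p(\R^n)}^{-1}, \qquad g(\theta):=\|h_Q(\theta^t\nabla f(\cdot))\|_{L^p(\R^n)},
\]
which you then want to combine with $\|g^{-1}\|_{L^\infty}\leq (c_0\|f\|_{L^p})^{-1}$ via H\"older's $L^1$--$L^\infty$ interpolation. This intermediate claim is false, and no amount of ``exploiting the convex-geometric structure'' will rescue it. Take $n=2$, $m=1$, $Q=[-1,1]$, and $f(x)=\sin(Nx_1)\phi(x)$ with a fixed bump $\phi$. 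Then $\nabla f$ is overwhelmingly in the $e_1$ direction as $N\to\infty$, and one computes $g(\theta)\asymp \bigl(|\theta_1|^pN^p+|\theta_2|^p\bigr)^{1/p}$, while $\||\nabla f|\|_{L^p}\asymp N$. A direct computation then gives $\int_{\mathbb{S}^1} g(\theta)^{-1}\,d\theta\asymp N^{-1}\log N$, which is not $O(N^{-1})$. So the $L^1$ estimate fails by a logarithmic factor. (The target inequality (ii) is nevertheless true in this example: $\int_{\mathbb{S}^1}g^{-2}\,d\theta\asymp N^{-1}$, with no log.) The issue is structural: $L^1$--$L^\infty$ interpolation is too lossy here, because $g^{-nm}$ decays much faster than $g^{-1}$ in the directions where $g$ is large. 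Any correct proof must use the full power $g^{-nm}$, not a detour through $g^{-1}$; this is why \cite{LRZ24} works directly with $\int_\S g^{-nm}\,d\theta = nm\,\Vol(\Pi^\circ_{Q,p}f)$ rather than with a lower moment. To fix your write-up you should either supply a correct self-contained argument for (ii), or do as the paper does: quote the origin-symmetric case from \cite[Theorem~5.1]{LRZ24} and reduce general $Q\in\conbodio[m]$ to the symmetric body $\conv(Q,-Q)$.
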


We note that items (i) and (ii) were originally shown for origin-symmetric convex bodies, but the general case follows. Indeed, the constants $c_0$ and $d_0$ were not sharp, and so we have no qualms about making them even smaller to allow non-symmetric $Q$ in the following way: observe that, for every $Q\in\conbodio[m]$ there exists a constant $c$, depending on $Q$, such that
\[
c (\conv (Q,-Q)) \subseteq Q \subseteq \conv (Q,-Q).
\]
Thus, we have the existence of a $\tilde c$ depending on $Q$ and $p$ such that
\[
\|h_Q(\theta^t \nabla f(\cdot))\|_{L^p(\R^n)} \geq \tilde c \|h_{\conv(Q,-Q)}(\theta^t  \nabla f(\cdot))\|_{L^p(\R^n)} \text{ and }  \mathcal{E}_{ \conv (Q,-Q),p} f\leq \tilde c \Ep f
\]
for every such function $f$. Finally, $\tilde c$ can be absorbed into $c_0$ and $d_0$, respectively, in items (i) and (ii) in the above lemma.

We require the following $m$th-order affine Sobolev inequality from \cite[Theorem~1.1]{HLPRY25_2} for $p=1$ and the $m$th-order affine P\'olya-Szeg\"o principle from \cite[Theorem~1.3]{LRZ24}. 

\begin{lemma}\label{l;Sobolevlemma} Let $Q\in\conbodo[m]$ and $p \geq 1$. Then, we have the following:

\begin{itemize}
    \item[(a)] (Affine Sobolev inequality): Assume that $f \in \operatorname{BV}(\R^n)$ is compactly supported. Then, $$\mathcal{E}_{Q,1} f \geq a_{n,1} \|f\|_{L^{\frac{n}{n-1}}(\R^n)},$$ where $a_{n,1}$ is a sharp constant, specifically the $L^1$ Aubin-Talenti Sobolev constant. There is equality if and only if $f = a \chi_E$ for some $a >0$ and some ellipsoid $E \subset \R^n$. 

    \item[(b)] (Affine P\'olya-Szeg\"o principle): Let $p >1$ and assume that $f \in W_o^{1,p}(\R^n)$. Then $$\Ep f \geq \Ep f^\star = \||\nabla f^\star|\|_{L^p(\R^n)}.$$ Furthermore, if $f$ is non-negative and satisfies the consistency condition 
    \[
    \vol(\{x \colon |\nabla f^\star(x)|=0\} \cap \{x \colon 0 < f^\star(x) < \|f\|_\infty\}) =0,
    \]
    then there is equality in the P\'olya-Szeg\"o principle above if and only if $f(x) = F(|Ax|)$ for some $A \in GL(n)$ and $F \colon \R \to [0,\infty)$. 

\end{itemize}

\end{lemma}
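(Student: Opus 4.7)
The unifying idea is the representation \eqref{eq:better_energy}, which rewrites $\Ep f$ as a positive multiple of $\Vol(\PP f)^{-1/(nm)}$. Under this identification, part (b) reduces to the geometric monotonicity statement $\Vol(\PP f) \leq \Vol(\PP f^\star)$, a functional $m$th-order Petty projection inequality expressing the behavior of the polar projection body under spherical symmetrization. My plan is to prove this through a level-set (coarea) decomposition: rewrite $\|h_Q(\theta^t\nabla f(\cdot))\|_{L^p(\R^n)}$ as an integral over superlevel sets $\Omega(t,f)$, apply on each level set a sharp isoperimetric-type inequality for the gauge of $\PP$ of a convex body (whose extremal profile is realized by Euclidean balls), and then reassemble using a dual Minkowski-type integral inequality, in the spirit of the $m=1$ argument of Lutwak--Yang--Zhang \cite{LYZ02}.

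Once the monotonicity is in hand, the auxiliary identity $\Ep f^\star = \||\nabla f^\star|\|_{L^p(\R^n)}$ is a direct radial computation: $\nabla f^\star(x) = (f^\star)'(|x|)\,x/|x|$ gives $\|h_Q(\theta^t\nabla f^\star(\cdot))\|_{L^p(\R^n)}^p = \int_{\R^n}|(f^\star)'(|x|)|^p\,h_Q(\theta^t(x/|x|))^p\,dx$. Switching to polar coordinates in $x$ and integrating over $\theta \in \S$, the $\theta$-integral collapses through the normalization $\G\PP\B = (m/((nm+p)\omega_n))^{1/p}\B$ from \cite[Lemma~3.12]{HLPRY25_2}, recovering exactly the classical $L^p$ norm $\||\nabla f^\star|\|_{L^p(\R^n)}$ and dissolving all $Q$- and $\theta$-dependence.

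For part (a), I would first test the inequality on characteristic functions $\chi_E$ of sets of finite perimeter: by \eqref{eq:polar_projection_finite_perim}, $\E(\chi_E) = d_{n,1}(Q)(nm)^{-1/(nm)}\vol[nm](\Pi^\circ_{Q,1}E)^{-1/(nm)}$, so the claim reduces to a sharp $m$th-order Petty projection inequality $\vol[nm](\Pi^\circ_{Q,1}E)\,\vol(E)^{(n-1)m} \leq \vol[nm](\Pi^\circ_{Q,1}\B)\,\omega_n^{(n-1)m}$ with equality iff $E$ is an ellipsoid; I would derive this by dualizing the $m$th-order Busemann--Petty inequality \eqref{eq:BPcLQ} through the centroid body operator. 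Extension to general $f \in \operatorname{BV}(\R^n)$ then proceeds via the coarea formula \eqref{eq:coarea_BV}, which expresses the gauge of $\Pi^\circ_{Q,1}f$ as an integral of gauges of $\Pi^\circ_{Q,1}\Omega(t,f)$, combined with a reverse integral-Minkowski inequality to pass from the layer-cake decomposition to a single volume estimate. The equality cases in both parts are inherited from those of \eqref{eq:BPcLQ}, with the consistency hypothesis in (b) ensuring the symmetrization step is rigid.

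\textbf{Main obstacle.} The hardest step is the functional Petty projection monotonicity for $\PP$ together with its sharp equality characterization: for $m=1$ this is the celebrated Lutwak--Yang--Zhang inequality whose original proof threads through the solution of the $L^p$ Minkowski problem, and the $m$th-order generalization requires a careful level-set argument in $\R^{nm}$ paired with a delicate rigidity analysis to force the ellipsoidal equality case and to verify that the consistency condition is both necessary and sufficient to rule out degenerate rearrangements.
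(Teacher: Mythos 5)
The paper does \emph{not} prove this lemma; it imports part (a) from \cite[Theorem~1.1]{HLPRY25_2} and part (b) from \cite[Theorem~1.3]{LRZ24} as black boxes, with a single sentence of attribution immediately preceding the statement. Your proposal is therefore attempting to reconstruct proofs that the authors deliberately outsource, and so the ``comparison with the paper's proof'' you were asked to make is against a proof that isn't there.

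As a reconstruction, the sketch is credible for part (a): testing on $\chi_E$, reducing to an $m$th-order Petty projection inequality, dualizing via the Busemann--Petty inequality \eqref{eq:BPcLQ}, and then extending to general $\operatorname{BV}$ functions by the coarea formula \eqref{eq:coarea_BV} plus Minkowski's integral inequality is indeed the Zhang--Wang route and is structurally what the cited source does (and what the present paper itself uses in Section~6.4 and in Theorem~\ref{t:cheeger}). The auxiliary radial identity $\Ep f^\star = \||\nabla f^\star|\|_{L^p}$ is also correct, though it falls out directly from the normalization built into $d_{n,p}(Q)$ (factor the angular integral and cancel against $\Vol(\PP \B)$), rather than from the identity $\Gamma_{Q,p}\Pi^\circ_{Q,p}\B = (m/((nm+p)\omega_n))^{1/p}\B$ you invoke — that identity is used elsewhere (to normalize $L_{Q,p,f}$), not here.

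The genuine gap is in part (b) for $p>1$. Your plan is to ``rewrite $\|h_Q(\theta^t\nabla f(\cdot))\|_{L^p}$ as an integral over superlevel sets, apply a sharp isoperimetric-type inequality on each level set, and reassemble.'' That level-set decomposition is exactly what works for $p=1$, where $\int h_Q(\theta^t\nabla f)\,dx = \int_0^\infty \|\theta\|_{\Pi^\circ_{Q,1}\Omega(t,f)}\,dt$. For $p>1$ the integrand $h_Q(\theta^t\nabla f)^p$ carries a factor $|\nabla f|^{p-1}$ on each level set after coarea, and this factor is not controlled by the geometry of the level set alone; there is no clean reduction to a per-level-set Petty projection inequality. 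The known proofs (LYZ \cite{LYZ02} for the Sobolev inequality, CLYZ \cite{CLYZ09} and the $m$th-order version in \cite{LRZ24} for Pólya--Szegő) instead route through the $L^p$ Busemann--Petty centroid inequality, a Gauss--Green/Wulff-Sobolev identity for the convex body $\Gamma_{Q,p}\Pi^\circ_{Q,p} f$, and, in the Pólya--Szegő case, a convexification construction with careful approximation and rigidity analysis; none of these are captured by a layer-cake argument. You acknowledge this is the hard step in your ``main obstacle'' remark, but the mechanism you propose there is still the level-set one, so the obstacle is not actually addressed — for $p>1$ you need to switch frameworks, not refine the coarea bookkeeping.
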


\section{Analysis of the $m$th-order affine $p$-Laplacian} \label{s:PDEs}

Define the convex body 
\begin{equation}L_{Q,p,f} = \left(\frac{m}{(nm+p)\omega_n}\right)^{-\frac{1}{p}}\left(\frac{\vol[nm]\left(\Pi^\circ_{Q,p}B_2^n\right)}{\vol[nm]\left(\Pi^\circ_{Q,p} f\right)}\right)^\frac{1}{nm} \Gamma_{Q,p} \Pi^\circ_{Q,p} f.
\label{eq:L_body}
\end{equation}
The pre-factors are chosen so that, if $f$ has the property that $\Pi^\circ_{Q,p} f = \Pi^\circ_{Q,p}B_2^n$, then $L_{Q,p,f} = B_2^n$. The support function of $L_{Q,p,f}$ is precisely 
\begin{equation}
\label{eq:L_body_supp}
h_{L_{Q,p,f}}(z) = \left(\frac{m}{\omega_n}\right)^{-\frac{1}{p}}\frac{\vol[nm]\left(\Pi^\circ_{Q,p}B_2^n\right)^\frac{1}{nm}}{\vol[nm]\left(\Pi^\circ_{Q,p} f\right)^{\frac{1}{p}+\frac{1}{nm}}} \left(\int_{\S} \Lp^{-nm-p} h_Q(\theta^tz)^p d\theta\right)^\frac{1}{p}.
\end{equation}
It will be fruitful to use \eqref{eq:better_energy} to re-write the constant in \eqref{eq:L_body_supp}:
\begin{equation}
\label{eq:new_constant}
     \left(\frac{m}{\omega_n}\right)^{-\frac{1}{p}}\frac{\vol[nm]\left(\Pi^\circ_{Q,p}B_2^n\right)^\frac{1}{nm}}{\vol[nm]\left(\Pi^\circ_{Q,p} f\right)^{\frac{1}{p}+\frac{1}{nm}}} = d_{n,p}(Q)^{-\frac{nm}{p}}\Ep f^{\frac{nm}{p}+1}.
\end{equation}

Using the above definitions and identities, we prove the following invariance properties for the $m$th-order $p$-Laplace operator. Recall that this was defined in Definition~\ref{d:D} as the Wulff $p$-Laplacian with the choice $K= L_{Q,p,f}$.

\begin{proposition}\label{p:properties} Let $p >1$ and $Q \in \mathcal{C}^{n,m}$, and let $\Omega \subset \R^n$ be a bounded, open set.  Let $A \in \operatorname{SL}(n)$, $f \in W_0^{1,p}(\Omega)$, $a >0$ and $g(\cdot) = f(A\cdot)$.  Then we have the following:
\begin{itemize}
    \item[(a)] $\D g(x) = (\D f)(Ax)$;
    \item[(b)] $\D (a f)(x) = a^{p-1} \D f(x)$;
    \item[(c)] if $f$ is a radial function, then $\D f = \Delta_{p} f$ in $\R^n$. 
\end{itemize}
\end{proposition}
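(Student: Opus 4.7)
The plan is to handle each invariance by first tracking how the auxiliary convex body $L_{Q,p,f}$ from \eqref{eq:L_body} transforms under the corresponding operation on $f$, and then invoking the standard identities $h_{TK}(z)=h_K(T^tz)$, $\nabla h_{TK}(z)=T\nabla h_K(T^tz)$, together with the chain rule for divergence.

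For item (a), set $g=f\circ A$ with $A\in\SL(n)$, so that $\nabla g(x)=A^t\nabla f(Ax)$. A change of variables in the defining integral of $\|\theta\|_{\PP f}$, using $|\det A|=1$, yields $\PP g=\bar A^{-1}\PP f$. Since $\det\bar A=(\det A)^m=1$, the volume prefactor in $L_{Q,p,g}$ equals that of $L_{Q,p,f}$; combining this with the covariance $\Gamma_{Q,p}(\bar TL)=T\,\Gamma_{Q,p}L$ recalled in Section~\ref{s:prelim} produces $L_{Q,p,g}=A^{-1}L_{Q,p,f}$. The support-function identities then give
\[
h_{L_{Q,p,g}}(\nabla g(x))=h_{L_{Q,p,f}}(\nabla f(Ax)), \qquad \nabla h_{L_{Q,p,g}}(\nabla g(x))=A^{-1}\nabla h_{L_{Q,p,f}}(\nabla f(Ax)),
\]
and expanding $\operatorname{div}$ with the chain rule shows that the factor $A^{-1}$ pulled out of the vector field cancels the Jacobian $A$ produced by differentiating $Ax$, yielding $\D g(x)=(\D f)(Ax)$.

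For item (b), with $g=af$ and $a>0$, the $1$-homogeneity of $h_Q$ gives $\PP g=a^{-1}\PP f$, while $\Gamma_{Q,p}(\lambda L)=\lambda\Gamma_{Q,p}L$ is immediate from its defining integral. The scaling factors in \eqref{eq:L_body} cancel, so $L_{Q,p,g}=L_{Q,p,f}$; in other words, the body $L_{Q,p,f}$ is invariant under positive scalar multiplication of $f$. The $(p-1)$-homogeneity of $\D$ then follows because $h_K$ is $1$-homogeneous and $\nabla h_K$ is $0$-homogeneous in its argument.

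For item (c), if $f=F(|\cdot|)$ is radial, then $\nabla f(x)=F'(|x|)x/|x|$, and polar coordinates applied to the defining integral give
\[
\|\theta\|_{\PP f}^p = \Big(\int_0^\infty r^{n-1}|F'(r)|^p\,dr\Big)\int_{\s}h_Q(\theta^tu)^p\,du = C_F\,\|\theta\|_{\PP\B}^p,
\]
so $\PP f$ is a positive dilate of $\PP\B$. Substituting into \eqref{eq:L_body} and using the identity $\Gamma_{Q,p}\PP\B=(m/((nm+p)\omega_n))^{1/p}\B$ recalled earlier, the prefactors conspire so that the dilation cancels, leaving $L_{Q,p,f}=\B$ on the nose. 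Since $h_{\B}(z)=|z|$ and $\nabla h_{\B}(z)=z/|z|$, substitution into \eqref{e:laplacianidentity} immediately gives $\D f=-\operatorname{div}(|\nabla f|^{p-2}\nabla f)=\Delta_p f$.

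The main obstacle I anticipate is the bookkeeping in item (a): keeping straight $\det\bar A=(\det A)^m$, distinguishing between the $\bar A^{-1}$ that appears in $\PP g$ and the $A^{-1}$ that appears after applying the $\Gamma_{Q,p}$ covariance, and reconciling the transpose $A^{-t}$ from $\nabla h_{A^{-1}L}$ with the $A^t$ in $\nabla g$. The constant check in item (c)---confirming that $L_{Q,p,f}=\B$ rather than a mere scalar multiple---is another spot where a stray exponent would derail the conclusion.
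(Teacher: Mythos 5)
Your proposal is correct and follows essentially the same route as the paper: you track how $\Pi^\circ_{Q,p}f$, and hence $L_{Q,p,f}$, transforms under composition with $A$, scaling of $f$, and radiality (getting $L_{Q,p,g}=A^{-1}L_{Q,p,f}$, $L_{Q,p,af}=L_{Q,p,f}$, and $L_{Q,p,f}=B_2^n$, exactly as in the paper), and then conclude via the corresponding properties of the Wulff $p$-Laplacian. The only difference is cosmetic: where the paper cites \cite[Proposition~2(i)]{HJM21} for the $\operatorname{SL}(n)$-equivariance of $\Delta_{p,K}$ and for $\Delta_{p,B_2^n}=\Delta_p$, you verify these identities directly with the support-function and chain-rule computations, and your constant check in item (c) is carried out correctly.
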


\begin{proof} It follows from the definition \eqref{eq:L_body} of $L_{Q,p,f}$ and \cite[Proposition~3.3]{HLPRY25_2} that one has relationship 
\[
\Pi^\circ_{Q,p}g = \overline{A^{-1}} \Pi^\circ_{Q,p}f.
\]
Therefore, 
\begin{align*}
L_{Q,p,g} &= \left(\frac{m}{(nm+p)\omega_n}\right)^{-\frac{1}{p}}\left(\frac{\vol[nm]\left(\Pi^\circ_{Q,p}B_2^n\right)}{\vol[nm]\left(\Pi^\circ_{Q,p} g\right)}\right)^\frac{1}{nm} \Gamma_{Q,p} \Pi^\circ_{Q,p} g
\\
&=\left(\frac{m}{(nm+p)\omega_n}\right)^{-\frac{1}{p}}\left(\frac{\vol[nm]\left(\Pi^\circ_{Q,p}B_2^n\right)}{\vol[nm]\left(\overline{A^{-1}}\Pi^\circ_{Q,p} f\right)}\right)^\frac{1}{nm} \Gamma_{Q,p} \left(\overline{A^{-1}}\Pi^\circ_{Q,p} f\right)
\\
&=A^{-1}\left[\left(\frac{m}{(nm+p)\omega_n}\right)^{-\frac{1}{p}}\left(\frac{\vol[nm]\left(\Pi^\circ_{Q,p}B_2^n\right)}{\vol[nm]\left(\Pi^\circ_{Q,p} f\right)}\right)^\frac{1}{nm} \Gamma_{Q,p} \Pi^\circ_{Q,p} f\right]
\\
&=A^{-1} L_{Q,p,f}.
\end{align*}
By \cite[Proposition~2(i)]{HJM21}, we can finally write
\[
\D g(x) = \Delta_{p,L_{Q,p,g}} g(x) = \Delta_{p, A^{-1}L_{Q,p,f}} f(Ax) = \D f(Ax),
\]
which establishes item (a).

For item (b), first observe that $\Ep (af) = a \Ep f$. Then, by \eqref{eq:L_body_supp} and \eqref{eq:new_constant}, we have the point-wise identity
\begin{align*}
h_{L_{Q,p,(af)}}(z) &= d_{n,p}(Q)^{-\frac{nm}{p}}\Ep (af)^{\frac{nm}{p}+1} \left(\int_{\S} \|h_Q(\theta^t \nabla (af)(\cdot))\|_{L^p(\R^n)}^{-nm-p} h_Q(\theta^tz)^p d\theta\right)^\frac{1}{p}
\\
& = a^{\frac{nm}{p}+1} d_{n,p}(Q)^{-\frac{nm}{p}}\Ep f^{\frac{nm}{p}+1} \left(\int_{\S} \|h_Q(\theta^t \nabla (af)(\cdot))\|_{L^p(\R^n)}^{-nm-p} h_Q(\theta^tz)^p d\theta\right)^\frac{1}{p}
\\
& =  d_{n,p}(Q)^{-\frac{nm}{p}}\Ep f^{\frac{nm}{p}+1} \left(\int_{\S} \|h_Q(\theta^t \nabla f(\cdot))\|_{L^p(\R^n)}^{-nm-p} h_Q(\theta^tz)^p d\theta\right)^\frac{1}{p} = h_{L_{Q,p,f}}(z),
\end{align*}
showing that $L_{Q,p,(af)} = L_{Q,p,f}$ for all $a>0$. Therefore, we have
\begin{align*}
\D (af)(x) &= \Delta_{p,L_{Q,p,(af)}} (af)(x) = \Delta_{p, L_{Q,p,f}} (af)(x) 
\\
&= -\operatorname{div}(h_{L_{Q,p,f}}^{p-1}(\nabla (af)) \nabla h_{L_{Q,p,f}}(\nabla (af))) 
\\
&= -a^{p-1} \operatorname{div}(h_{L_{Q,p,f}}^{p-1}(\nabla f) \nabla h_{L_{Q,p,f}}(\nabla f)) = a^{p-1}\D f(x).
\end{align*}

For item (c), it was shown in \cite{HJM21} that, if $K=B_2^n$ and $f$ is radial, then $\Delta_{p,B_2^n} f = \Delta_p f$. Therefore, it suffices to show that, if $f$ is radial, then $L_{Q,p,f}=B_2^n$. But this is immediate from the fact that, if $f$ is radial, then $\Pi^\circ_{Q, p} f = \Pi^\circ_{Q, p} r B_2^n$ for some $r>0$. 
\end{proof}

The next proposition concerns robust regularity properties enjoyed by the operator $\D$ acting on $W_0^{1,p}(\Omega)$; it's proof follows along the lines of the proof of \cite[Proposition~4]{HJM21}.

\begin{proposition}\label{p:regularity1} Let $\Omega \subset \R^n$ be a bounded, open set, $Q \in\mathcal{C}^{n,m}$, and $p > 1$. Suppose that $f \in W_0^{1,p}(\Omega)$ is a solution to the problem 
\[
\begin{cases}
\D f = h_0 &\text{in } \Omega,\\
f = 0 &\text{on } \partial \Omega,
\end{cases}
\]
where $h_0 \colon \Omega \to \R$ is a measurable function. Then the following facts are valid:
\begin{enumerate}
    \item[(a)] if $p <n$ and $h_0 \in L^{\frac{n}{p}}(\Omega)$, then $f \in L^k(\Omega)$ for every $k \geq 1$;
    \item[(b)] if $p \leq n$ and $h_0 \in L^q(\Omega)$ for some $q > \frac{n}{p}$, then $f \in L^\infty(\Omega)$;
    \item[(c)] if $h_0 \in L^\infty(\Omega)$, then $f \in C^{1,\alpha}(\Omega)$ for some $0 < \alpha < 1$;
    \item[(d)] if $h_0 \in L^\infty(\Omega)$, then $f \in C^{1,\alpha}(\overline{\Omega})$ for some $\alpha \in (0,1)$ provided the boundary of $\Omega$ is of class $C^{2,\alpha}$. 
\end{enumerate}

\end{proposition}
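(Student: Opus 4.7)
The plan is to exploit the observation that, for each fixed solution $f$, the convex body $L_{Q,p,f}$ is a \emph{fixed} element of $\conbodio[n]$, so that $\D f = \Delta_{p, L_{Q,p,f}} f$ becomes a standard Wulff $p$-Laplacian with frozen, smooth coefficients. This reduction allows me to transplant the classical regularity machinery essentially verbatim from \cite[Proposition~4]{HJM21}, which handles the $m=1$ case. The bootstrap character of the argument is that once $f$ is granted to be a (weak) solution, the body $L_{Q,p,f}$ is determined, and then every subsequent regularity statement is a statement about that single, frozen elliptic equation.

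The first step I would carry out is to rewrite the equation $\D f = h_0$ as
\[
-\operatorname{div}\bigl(\nu(\nabla f)\bigr) = h_0, \qquad \nu(z) := h_{L_{Q,p,f}}(z)^{p-1}\nabla h_{L_{Q,p,f}}(z),
\]
and to verify that $\nu$ satisfies the structure conditions of Lemma~\ref{l:Tolksdorff}. The $C^2$ regularity of $h_{L_{Q,p,f}}$ on $\R^n \setminus \{0\}$ is provided by Lemma~\ref{l:smoothness}(ii), using $Q \in \mathcal{C}^{n,m}$. Combined with the two-sided bounds $\alpha_f |z| \leq h_{L_{Q,p,f}}(z) \leq \beta_f |z|$ (which hold because $L_{Q,p,f} \in \conbodio[n]$), the $1$-homogeneity of $h_{L_{Q,p,f}}$, and the strict convexity of $\tfrac{1}{p}h_{L_{Q,p,f}}^{p}$ transverse to $z$, the direct computation
\[
D\nu(z) = (p-1)\, h_{L_{Q,p,f}}(z)^{p-2}\, \nabla h_{L_{Q,p,f}}(z) \otimes \nabla h_{L_{Q,p,f}}(z) + h_{L_{Q,p,f}}(z)^{p-1}\, D^2 h_{L_{Q,p,f}}(z)
\]
produces the ellipticity constant $c_1 = c_1(f) > 0$ and the upper bound $c_2 = c_2(f) > 0$ required in Lemma~\ref{l:Tolksdorff}.

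With the Tolksdorff structure in hand, items (a) and (b) follow from Moser iteration applied to the weak formulation. For (a), I would test the equation against $|f|^{r}\operatorname{sgn}(f)$ (with truncations as needed), use ellipticity to extract a term of the form $\int |\nabla |f|^{(r+p)/p}|^p$, and then combine the Euclidean Sobolev inequality with H\"older's inequality at exponent $n/p$ on the right-hand side; the hypothesis $h_0 \in L^{n/p}(\Omega)$ makes the right-hand side absorbable, and iterating in $r$ yields $f \in L^k(\Omega)$ for every $k \geq 1$. For (b), the stronger integrability $h_0 \in L^{q}(\Omega)$ with $q > n/p$ turns the sequence of iteration exponents into a summable geometric one, producing $f \in L^\infty(\Omega)$. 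These are standard Moser computations for $p$-Laplace-type operators of Tolksdorff structure, done essentially line by line in \cite[Proposition~4]{HJM21}.

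Once $f \in L^\infty(\Omega)$ (secured by (b) under the hypotheses of (c) and (d)), the frozen equation $-\operatorname{div}\,\nu(\nabla f) = h_0 \in L^\infty(\Omega)$ falls squarely within the scope of Tolksdorff's interior regularity theorem \cite{Tolksdorf}, giving $f \in C^{1,\alpha}_{\mathrm{loc}}(\Omega)$ for some $\alpha \in (0,1)$ and proving (c). For (d), the $C^{2,\alpha}$ boundary hypothesis triggers Lieberman's boundary regularity theorem \cite{LG88} and upgrades the estimate to $f \in C^{1,\alpha}(\overline{\Omega})$. The genuine obstacle in the whole argument is the structural verification in the first step: one must confirm that the constants $c_1(f), c_2(f)$ are positive and finite, which is precisely where the hypothesis $Q \in \mathcal{C}^{n,m}$ together with Lemma~\ref{l:smoothness}(ii) is indispensable, since both constants are controlled by the $C^2$ norms of $h_{L_{Q,p,f}} \propto h_{\Gamma_{Q,p}\Pi^\circ_{Q,p}f}$ on $\s$.
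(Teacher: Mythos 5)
Your overall reduction coincides with the paper's: freeze the body $L_{Q,p,f}$, view $\D f$ as the Wulff operator $-\operatorname{div}(\nu(\nabla f))$ for a fixed vector field $\nu$, verify the structure conditions of Lemma~\ref{l:Tolksdorff}, and then run the De Giorgi--Nash--Moser/Tolksdorf/Lieberman machinery exactly as in \cite[Proposition~4]{HJM21}; your treatment of items (a)--(d) from that point on is fine. The gap is in the step you yourself identify as the crux, the verification of the first (ellipticity) condition. You assert that $D\nu(z)\geq c_1|z|^{p-2}\,\mathrm{Id}$ follows from the two-sided bounds $\alpha_f|z|\leq h_{L_{Q,p,f}}(z)\leq\beta_f|z|$, $1$-homogeneity, and the ``strict convexity of $\tfrac1p h_{L_{Q,p,f}}^p$ transverse to $z$'', with $c_1$ ``controlled by the $C^2$ norms'' of the support function. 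Neither claim holds for an abstract body in $\conbodio[n]$ with $C^2$ support function: for a direction $x$ with $\langle\nabla h_{L_{Q,p,f}}(z),x\rangle=0$ the rank-one term in your formula for $D\nu(z)$ vanishes, so positivity requires $D^2h_{L_{Q,p,f}}(z)x\cdot x>0$, and this transverse strict convexity is exactly what must be proved --- it can fail (a body with a conical vertex can have a $C^2$ support function that is linear, hence with vanishing Hessian, on an open cone of normal directions). Moreover, no bound on $C^2$ norms can yield a \emph{positive lower} bound on a Hessian; $C^2$ control plus the $(p-2)$-homogeneity of $D\nu$ gives only the upper bound $c_2$, which is the unproblematic half.

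What makes $c_1>0$ true here is the specific integral structure of $h_{L_{Q,p,f}}$, and this is where the paper's proof does its real work. Writing $\tfrac1p h_{L_{Q,p,f}}(z)^p$, up to the constant $d_{n,p}(Q)^{-nm}(\Ep f)^{nm+p}$, as $\int_{\S}\Lp^{-nm-p}\tfrac1p h_Q(\theta^tz)^p\,d\theta$, one computes
\[
D\nu(z)x\cdot x=(p-1)\int_{\S}\Lp^{-nm-p}h_Q(\theta^tz)^{p-2}\Bigl(\sum_{i=1}^m\partial_ih_Q(\theta^tz)\langle x,\theta_i\rangle\Bigr)^{2}d\theta+\text{(psd term from }D^2h_Q\text{)},
\]
discards the positive semidefinite term coming from the convexity of $Q$, and obtains strict positivity because the squared first-order terms cannot vanish for a.e.\ $\theta\in\S$ when $x\neq0$; the family of rank-one contributions over the whole sphere $\S$ is what restores definiteness. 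Making this uniform in $z$ also requires the two-sided kernel bounds $c_0\|f\|_{L^p(\Omega)}\leq\Lp\leq\||\nabla f|\|_{L^p(\Omega)}$, i.e.\ Lemma~\ref{l:useful}(i) together with Cauchy--Schwarz, which your argument never invokes (the same bounds, in the opposite direction, are what the paper uses to get $c_2$). So the proposal needs this structural computation inserted in place of the appeal to abstract strict convexity; with it, the rest of your outline goes through as in the paper.
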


\begin{proof} We may assume that $f$ is non-zero. Consider the vector field $\nu \colon \R^n \to \R^n$ defined by 
\[
\nu(z) = d_{n,p}(Q)^{nm}(\Ep f)^{-nm-p}h_{L_{Q,p,f}}(z)^{p-1}\nabla h_{L_{Q,p,f}}(z). 
\]
We also require the related differential operator $\mathcal{L}$ given by the equation
\[
\mathcal{L} g = - \text{div}(\nu(\nabla g)), \quad g \in W_{0}^{1,p}(\Omega). 
\]
Since $ \mathcal{L} = d \D f$ for some constant $d >0$, and as since $f$ is non-zero, in view of arguments based on the works \cite{DeGiorgi, NASH, MJ60,DE83,Tolksdorf} (see \cite{HJM21} for more details), in order to prove items (a)-(d), it is enough to show that our operator fits into the situation of Lemma~\ref{l:Tolksdorff}. 

 First, observe that by employing the Cauchy-Schwarz inequality followed by Lemma~\ref{l:useful}(i), there is a constant $c_0 > 0$ such that 
\begin{equation}\label{e:ineqs}
c_0 \|f\|_{L^p(\Omega)} \leq \Lp \leq \||\nabla f|\|_{L^p(\Omega)} \quad \text{ for all } \theta \in \S.
\end{equation}

By Lemma~\ref{l:smoothness}, we know that the function $h_{L_{Q,p,f}}$ belongs to the class $C^1(\R^n) \cap C^2(\R^n \setminus \{0\})$. We will begin by working on verifying  the first condition of Lemma~\ref{l:Tolksdorff}. Since the following computations are fairly long, we adopt the notation $\|\cdot\| = \Lp$ for every $\theta \in \S$.  For $z \in \R^n$ and any $j =1,\dots,n$, a direct computation shows that the $j$th component of $\nu$ is given by
\begin{align*}
\nu_j(z) &= d_{n,p}^{nm}(Q) (\Ep f)^{-nm-p} \frac{\partial}{\partial z_j}\left( \frac{1}{p} h_{L_{Q,p,f}}^p(z)\right)=\int_{\S} \|\theta\|^{-nm-p} h_Q(\theta^t z)^{p-1} \sum_{i=1}^m \partial_ih_Q(\theta^t z) \theta_{i,j} d\theta,
\end{align*}
where $\theta_{i,j}$ denotes the $j$th component of $\theta_i \in \R^n$.   

Next, we compute 
\begin{align*}
&\frac{\partial \nu_j(z)}{\partial z_k} = \frac{\partial}{\partial z_k}\left( \int_{\S} \|\theta\|^{-nm-p} h_Q(\theta^t z)^{p-1} \sum_{i=1}^m \partial_i h_Q(\theta^t z) \theta_{i,j} d\theta\right)\\
&= (p-1) \int_{\S} \|\theta\|^{-nm-p} h_Q(\theta^t z)^{p-2} \sum_{i,\ell=1}^m \partial_i h_Q(\theta^tz) \partial_\ell h_Q(\theta^t z) \theta_{i,j} \theta_{\ell,k} d\theta\\
&+ \int_{\S} \|\theta\|^{-nm-p} h_Q(\theta^tz)^{p-1} \sum_{i,\eta =1}^m \partial_{i\eta}h_Q(\theta^tz) \theta_{i,j}\theta_{\eta,k} d\theta. 
\end{align*}
Consequently, for every $z \in \R^n \setminus \{0\}$ and $x \in \R^n$, we obtain 
\begin{align*}
\sum_{j,k=1}^n \frac{\partial \nu(z)}{\partial z_k} x_jx_k &= (p-1) \int_{\S} \|\theta\|^{-nm-p} h_Q(\theta^t z)^{p-2} \left(\sum_{i=1}^m \partial_i h_Q(\theta^t z) \langle x,\theta_i\rangle \right)^2 d\theta\\
&+ \int_{\S} \|\theta\|^{-nm-p} h_Q(\theta^t z)^{p-1}\sum_{i,\eta =1}^n \partial_{i\eta} h_Q(\theta^tz) \langle x,\theta_i\rangle \langle x, \theta_\eta\rangle d\theta. 
\end{align*}
From the convexity of $Q$, the function $h_Q$ is convex, and therefore the matrix $(\partial_{ij} h_Q(\cdot))_{i,\eta =1,\dots m}$ is positive semi-definite. Consequently, the second term in the above computation is non-negative. Therefore, the right-hand of \eqref{e:ineqs} implies 
\begin{align*}
\sum_{j,k=1}^n \frac{\partial \nu(z)}{\partial z_k} x_jx_k &\geq (p-1) \||\nabla f|\|_{L^p(\Omega)}^{-nm-p} \int_{\S}h_Q(\theta^t z)^{p-2} \left(\sum_{i=1}^m \partial_i h_Q(\theta^t z) \langle x,\theta_i\rangle \right)^2 d\theta\\
&\geq c_1 \||\nabla f|\|_{L^p(\Omega)}^{-nm-p} |z|^{p-2}|x|^2,
\end{align*}
where 
\[
c_1 := (p-1) \min_{z \in \s} \left[\int_{\S}\int_{O(n)}h_Q(\theta^tz)^{p-2} \left(\sum_{i=1}^m \partial_i h_Q(\theta^tz) (S^*\theta_i) \right)^2 dS d\theta \right].
\]
Here $dS$ is the rotationally invariant Haar probability measure on orthogonal group $O(n)$.  Since $p >1$, $c_1$ is finite and positive. Therefore, we have verified the first condition of Lemma~\ref{l:Tolksdorff}.

We proceed with verifying that we satisfy the second condition of Lemma~\ref{l:Tolksdorff}.  To start, the lower bound of \eqref{e:ineqs} yields
\begin{align*}
\sum_{j,k=1}^n \left|\frac{\partial \nu_j(z)}{\partial z_k}\right| &\leq (p-1) \sum_{j,k=1}^n \int_{\S} \|\theta\|^{-nm-p} h_Q(\theta^t z)^{p-2} \sum_{i,\ell=1}^m |\partial_i h_Q(\theta^tz) \partial_\ell h_Q(\theta^tz)||\theta_{i,j}||\theta_{j,k}|d\theta\\
&+ \sum_{j,k=1}^n  \int_{\S} \|\theta\|^{-nm-p}h_Q(\theta^tz)^{p-1} \sum_{i,\eta=1}^m|\partial_{i\eta} h_Q(\theta^tz)||\theta_{i,j}||\theta_{\eta,k}|d\theta\\
&\leq n^2(p-1)(c_0 \|f\|_{L^p(\Omega)})^{-nm-p}\int_{\S} h_Q(\theta^tz)^{p-2}  \sum_{i,\ell=1}^n |\partial_i h_Q(\theta^tz)| |\partial_\ell h_Q(\theta^tz)|d\theta\\
&+ n^2 (c_0 \|f\|_{L^p(\Omega)})^{-nm-p} \int_{\S} h_Q(\theta^tz)^{p-1} \sum_{i,\eta=1}^m |\partial_{i\eta}h_Q(\theta^tz)| d\theta\\
&\leq c_2 (c_0 \|f\|_{L^p(\Omega)})^{-nm-p}|z|^{p-2},
\end{align*}
where 
\begin{align*}
c_2 &= (p-1)n^2 \int_{\S} \int_{O(n)} h_Q(\theta_T) \sum_{i,\ell=1}^m |\partial_i h_Q(\theta_T)| |\partial_\ell h_Q(\theta_T)| dT d\theta\\
&+n^2 \int_{\S} \int_{O(n)} h_Q(\theta_T)^{p-1} \sum_{i,\eta=1}^m |\partial_{i\eta} h_Q(\theta_T)| dT d\theta,
\end{align*}
and $\theta_T := ((T^* \theta_1)_1,\dots, (T^*\theta_m)_1)$.  As $p >1$, we have that $c_2$ is finite and positive. We have verified  the hypotheses of Lemma~\ref{l:Tolksdorff}, and completed the proof.  

\end{proof}

\section{Rayleigh quotient and minimization problem}
In this section, we study $\L$. However, we do so more generally. Throughout this section, we fix a bounded $\Omega\subset \R^n$, $1\leq p<\infty$, $1\leq q\leq p$ and $Q\in\mathcal{C}^{n,m}$. Then, we define the $q$th affine $m$th-order Rayleigh quotient $\Rqq f$ to be
\[
\Rqq f = \frac{\Ep f}{\|f\|_{L^q(\Omega)}}.
\]
Observe that 
\[\mathcal{R}^\mathcal{A}_{Q,p,p}f=(\Rq f)^\frac{1}{p}.\]
We then set, for a bounded, open set $\Omega,$
\begin{equation}\label{e:poincareconst_q}
\Lq = \inf\left\{\Rqq f \colon f \in W^{1,p}_0(\Omega) \setminus \{0\} \right\}.
\end{equation}
The relation between $\L$ and $\Lq$ is precisely 
\[
\lambda_{1,p}^{\mathcal{A},p}(Q,\Omega)  = (\L)^\frac{1}{p}.
\]
Recall the classical (Euclidean) minimization problem is 
\begin{equation}
\lambda_{1,p}^q(\Omega) = \inf\left\{\mathcal{R}_{p,q} f \colon f \in W^{1,p}_0(\Omega) \setminus \{0\} \right\}, \text{where} \quad \mathcal{R}_{p,q}f = \frac{\||\nabla f |\|_{L^p(\Omega)}}{\|f\|_{L^q(\Omega)}}.
\label{eq:classical_min}
\end{equation}
Of course, $W^{1,p}_0(\Omega)$ should be $\operatorname{BV}(\Omega)$ if $q=p=1$ in \eqref{e:poincareconst_q} and \eqref{eq:classical_min}.
The quantity $\lambda_{1,q}^{\mathcal{A},p}(Q,\Omega)$ can be directly compared to $\lambda_{1,p}^q(\Omega)$: by \cite[Theorem 1.2]{LRZ24} and Lemma~\ref{l:useful}(ii), and then H\"older's inequality, we have
\begin{align*}
     \||\nabla f|\|_{L^p(\Omega)} &\geq \Ep f \geq d_0(Q,p,\Omega) \|f\|_{L^p(\Omega)}^{\frac{nm-1}{nm}} \||\nabla f|\|_{L^p(\Omega)}^\frac{1}{nm}
     \\
     & \geq d_0(Q,p,q,\Omega)\|f\|_{L^q(\Omega)}^{\frac{nm-1}{nm}} \||\nabla f|\|_{L^p(\Omega)}^\frac{1}{nm}
     \\
     & \geq d_0(Q,p,q,\Omega)\|f\|_{L^q(\Omega)} \left(\frac{\||\nabla f|\|_{L^p(\Omega)}}{\|f\|_{L^q(\Omega)}}\right)^\frac{1}{nm}.
\end{align*}
This yields the inequality
\[
\mathcal{R}_{p,q} f \geq \Rqq f \geq d_0(Q,p,q,\Omega)\left(\mathcal{R}_{p,q} f\right)^\frac{1}{nm}.
\]
Taking infima, we must have
\begin{equation}
\label{eq:relations}
    \lambda_{1,p}^q(\Omega) \geq \lambda_{1,p}^{\mathcal{A},q}(Q,\Omega) \geq d_0(Q,p,q,\Omega)\left(\lambda_{1,p}^q(\Omega)\right)^\frac{1}{nm}.
\end{equation}

\subsection{Minimization Problem}
This section concerns studying the functions attaining the infima appearing in \eqref{e:poincareconst} and \eqref{e:poincareconst_q}. We study the following equation on $W^{1,p}_0(\Omega)$:
 \begin{equation}\label{e:weak_q}
\D f = (\Lq)^p \|f\|_{L^q(\Omega)}^{p-q}|f|^{q-2} f \quad \text{ in } \Omega,
\end{equation}
which reduces to \eqref{e:weak} when $p=q$. Note that, if $q=p=1$, then  $W^{1,p}_0(\Omega)$ should be $\operatorname{BV}(\Omega)$.

We will follow very closely the methods of \cite{HJM21,HX22}. We first study the associated Euler-Lagrange equations. 

\begin{proposition}\label{p:eleq_q} 
  Let $\Omega \subset \R^n$ be a bounded, open set, $Q\in\mathcal{C}^{n,m}$, and let $1< q\leq p<\infty$. Suppose there exists a function $f \in W_0^{1,p}(\Omega)$ minimizing $\Lq$. Then, $f$ satisfies 
\[
\int_\Omega \langle h_{L_{Q,p,f}}(\nabla f)^{p-1} \nabla h_{L_{Q,p,f}}(\nabla f), \nabla \varphi \rangle dx = (\Lq)^p \|f\|_{L^q(\Omega)}^{p-q} \int_\Omega |f|^{q-2} f\varphi dx
\]
for every $\varphi \in W^{1,p}_0(\Omega)$. In other words, $f$ is a weak solution to the Dirichlet-type problem 
\[
\begin{cases}
\D f = (\Lq)^p \|f\|_{L^q(\Omega)}^{p-q}|f|^{q-2} f, &\text{in } \Omega, \\
f= 0, &\text{on } \partial \Omega.
\end{cases}
\]
\end{proposition}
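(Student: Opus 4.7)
\medskip

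\noindent\textbf{Proof plan.} The minimizer $f$ of $\Lq$ also minimizes $(\Rqq)^p$, so for every $\varphi\in W_0^{1,p}(\Omega)$ the one-variable function $t\mapsto \Ep(f+t\varphi)^p \cdot \|f+t\varphi\|_{L^q(\Omega)}^{-p}$ has a critical point at $t=0$. The whole proof is the computation of that derivative, combined with an identification of the variational derivative of $\Ep^p$ with the distributional action of $\D$. I would therefore organize the argument in three steps.

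\medskip

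\noindent\emph{Step 1: derivative of $\|f+t\varphi\|_{L^q(\Omega)}^p$.} This is the easy piece. Since $1<q\leq p<\infty$, one differentiates $(\int_\Omega |f+t\varphi|^q dx)^{p/q}$ at $t=0$, justifying the interchange via the elementary estimate $\big||f+t\varphi|^q-|f|^q\big|\le C(|f|^{q-1}+|\varphi|^{q-1})|\varphi|$ and H\"older's inequality, to get
\[
\tfrac{d}{dt}\|f+t\varphi\|_{L^q(\Omega)}^p\Big|_{t=0}=p\,\|f\|_{L^q(\Omega)}^{p-q}\int_\Omega |f|^{q-2}f\,\varphi\,dx.
\]

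\medskip

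\noindent\emph{Step 2: derivative of $\Ep(f+t\varphi)^p$.} Using \eqref{eq:m_lp_affine_sobolev}, write $\Ep(f)^p=d_{n,p}(Q)^p F(f)^{-p/nm}$ with $F(f)=\int_{\S}\|h_Q(\theta^t\nabla f(\cdot))\|_{L^p(\R^n)}^{-nm}d\theta$. First differentiate the inner $L^p$-norm: the chain rule and Lemma~\ref{l:kernel_smooth}(a), (c), (d) give, for a.e.\ $\theta\in\S$,
\[
\tfrac{d}{dt}\|h_Q(\theta^t\nabla(f+t\varphi))\|_{L^p(\R^n)}\Big|_{t=0}=\|h_Q(\theta^t\nabla f)\|_{L^p(\R^n)}^{1-p}\int_{\R^n}h_Q(\theta^t\nabla f)^{p-1}\langle \theta\nabla h_Q(\theta^t\nabla f),\nabla\varphi\rangle dx,
\]
the dominated convergence theorem being applied with the pointwise bounds from Lemma~\ref{l:kernel_smooth}(a), (d) and the $L^p$-integrability of $|\nabla f|^{p-1}|\nabla\varphi|$. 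Differentiating the $(-nm)$th power and integrating over $\S$ (justified by dominated convergence combined with the uniform lower bound $\|h_Q(\theta^t\nabla f)\|_{L^p(\R^n)}\ge c_0\|f\|_{L^p(\Omega)}>0$ from Lemma~\ref{l:useful}(i)), then using Fubini to swap the $\theta$- and $x$-integrals, yields
\[
\tfrac{d}{dt}\Ep(f+t\varphi)^p\Big|_{t=0}=p\,d_{n,p}(Q)^{-nm}\Ep(f)^{nm+p}\int_{\R^n}\!\bigg\langle\!\int_{\S}\!\|h_Q\|_{L^p}^{-nm-p}h_Q(\theta^t\nabla f)^{p-1}\theta\nabla h_Q(\theta^t\nabla f)d\theta,\nabla\varphi\bigg\rangle dx.
\]

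\medskip

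\noindent\emph{Step 3: identification with $\D$ and assembly.} Differentiating \eqref{eq:L_body_supp} via \eqref{eq:new_constant} in $z$ gives exactly
\[
h_{L_{Q,p,f}}(z)^{p-1}\nabla h_{L_{Q,p,f}}(z)=\tfrac{1}{p}\nabla_z h_{L_{Q,p,f}}(z)^p=d_{n,p}(Q)^{-nm}\Ep(f)^{nm+p}\int_{\S}\|h_Q\|_{L^p}^{-nm-p}h_Q(\theta^t z)^{p-1}\theta\nabla h_Q(\theta^t z)\,d\theta,
\]
where the differentiation under the integral is legitimate by Lemma~\ref{l:smoothness}. Substituting $z=\nabla f(x)$ rewrites Step 2 as
\[
\tfrac{d}{dt}\Ep(f+t\varphi)^p\Big|_{t=0}=p\int_\Omega\langle h_{L_{Q,p,f}}(\nabla f)^{p-1}\nabla h_{L_{Q,p,f}}(\nabla f),\nabla\varphi\rangle dx.
\]
Finally, setting the derivative of the Rayleigh-quotient-to-the-$p$ equal to zero and dividing by $p\|f\|_{L^q(\Omega)}^p$ gives the claimed weak identity after recognising $\Ep(f)^p/\|f\|_{L^q(\Omega)}^q=(\Lq)^p\|f\|_{L^q(\Omega)}^{p-q}$.

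\medskip

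\noindent\emph{Main obstacle.} The one genuinely delicate point is the rigorous justification of the differentiation in $t$ of the spherical integral defining $F(f+t\varphi)$: one needs a dominating function valid uniformly in $t$ in a neighbourhood of $0$ and uniformly in $\theta\in\S$. The combination of the pointwise growth estimates of Lemma~\ref{l:kernel_smooth}(a), (b), (d) (controlling the integrand and the difference quotient by $|\nabla f|^{p-1}|\nabla\varphi|$ up to constants depending on $R_Q$) with the positive uniform lower bound on $\|h_Q(\theta^t\nabla f)\|_{L^p(\R^n)}$ from Lemma~\ref{l:useful}(i) supplies such a dominating function; everything else is bookkeeping.
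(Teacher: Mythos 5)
Your proof is correct and follows essentially the same approach as the paper's: compute the Euler--Lagrange equation by differentiating the perturbed functionals at $t=0$ and identify the resulting spherical integral with the gradient term $\nabla(\tfrac{1}{p}h^p_{L_{Q,p,f}})(\nabla f)$. The only cosmetic differences are that the paper differentiates the vanishing ``deficit'' $\int_{\S}\|h_Q(\theta^t\nabla f_\varepsilon)\|_{L^p}^{-nm}d\theta - d_{n,p}(Q)^{nm}(\Lq)^{-nm}\|f_\varepsilon\|_{L^q}^{-nm}$ (which sidesteps the quotient rule) and routes the computation through an intermediate dilate $K_{Q,p,f}$ of $L_{Q,p,f}$ before rescaling, while you differentiate $\Rqq^p$ directly and land on $L_{Q,p,f}$ at once via \eqref{eq:L_body_supp}--\eqref{eq:new_constant}; the two are algebraically equivalent. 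One remark: the paper's proof also contains a converse step (under an auxiliary $C^2$ assumption) showing weak solutions are minimizers, which your write-up omits, but that direction is not asserted in the proposition statement, so no gap results. Your treatment of the justification for dominated convergence (Lemma~\ref{l:kernel_smooth}(a),(b),(d) together with the uniform lower bound from Lemma~\ref{l:useful}(i)) is in fact slightly more explicit than the paper's.
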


\begin{proof} 
Suppose that $f \in W^{1,p}_0(\Omega)$ is a minimizer associated to $\Lq$. Then, for every $g \in W_0^{1,p}(\Omega)$, it holds $\L \leq \Ep g \|g\|_{L^p(\Omega)}^{-1}$, or equivalently, 
\begin{equation}\label{e:minima}
\int_{\S} \|h_Q(\theta^t \nabla g(\cdot))\|_{L^p(\R^n)}^{-nm} d\theta - d_{n,p}(Q)^{nm} (\Lq)^{-nm} \|g\|_{L^q(\Omega)}^{-nm} \leq 0,
\end{equation}
with equality when $g=f$. 

For a test function $\varphi \in W_0^{1,p}(\Omega)$ and $\e>0$, set $f_\e = f + \e \varphi$. We will differentiate the left-hand side of \eqref{e:minima} with $g = f_\e$ at $\e = 0$. For brevity, we write $\frac{\partial}{\partial \varphi} = \frac{d}{d\varepsilon}\big|_{\varepsilon=0}$. In what follows, $\partial_i h_Q$ denotes the $i$th partial derivative of $h_Q$ for $i=1,\dots, m$. First, we compute 
\begin{equation}\label{e:el1}
\begin{split}
&\frac{\partial}{\partial \varphi}\left[\int_{\S} \|h_Q(\theta^t \nabla f_\e(\cdot))\|_{L^p(\R^n)}^{-nm} d\theta\right]\\
&= - nm \int_{\R^n} \!\!\!\left\langle \nabla \varphi(x), \int_{\S}\!\!\!\! \|h_Q(\theta^t \nabla f(\cdot))\|_{L^p(\R^n)}^{-nm-p}h_Q(\theta^t \nabla f(x))^{p-1} \sum_{i=1}^m \partial_i h_Q(\theta^t \nabla f(x)) \theta_i d\theta \right\rangle dx,
\end{split}
\end{equation}
where in the last step we have applied Fubini's theorem.  Next, consider the following dilate of $L_{Q,p,f}$, the convex body 
\begin{equation}
\label{eq:K_body}
    K_{Q,p,f} =(nm+p)^\frac{1}{p}\vol[nm](\Pi^\circ_{Q,p} f)^{\frac{1}{p}} \Gamma_{Q,p} \Pi^\circ_{Q,p} f,
\end{equation}
with support function 
\begin{equation}
\label{eq:K_body_support}
h_{K_{Q,p,f}}(z) = \left(\int_{\S}  \|h_Q(\theta^t \nabla f(\cdot))\|_{L^p(\R^n)}^{-nm-p}h_Q(\theta^t z)^p d\theta\right)^\frac{1}{p},
\end{equation}
and observe that
\[
\nabla \left(\frac{1}{p} h_{K_{Q,p,f}}^p\right)(z) = \int_{\S} \|h_Q(\theta^t \nabla f(\cdot))\|_{L^p(\R^n)}^{-nm-p}h_Q(\theta^t \nabla f(x))^{p-1} \sum_{i=1}^m \partial_i h_Q(\theta^t \nabla f(x)) \theta_i d\theta. 
\]
Thus, we can rewrite equation \eqref{e:el1} in the form 
\begin{equation}\label{e:el2}
\frac{\partial}{\partial \varphi}\left[\int_{\S} \|h_Q(\theta^t \nabla f_\e(\cdot))\|_{L^p(\R^n)}^{-nm} d\theta\right] = - nm \int_{\R^n} \left\langle \nabla  \varphi(x), \nabla \left(\frac{1}{p} h_{K_{Q,p,f}}^p\right)(\nabla f(x)) \right\rangle dx
\end{equation}
Similarly, it is readily verified that 
\begin{equation}\label{e:el3}
\frac{\partial}{\partial \varphi} [\|f_\e\|_{L^q(\Omega)}^{-nm}]  = -nm \|f\|_{L^q(\Omega)}^{-nm-q} \int_{\R^n}\varphi(x) |f(x)|^{q-2}f(x) dx.
\end{equation}
Combining equation \eqref{e:el1}, \eqref{e:el2} and \eqref{e:el3}, we obtain the following  weak formulation of \eqref{e:weak_q}: for every $\varphi \in W_0^{1,p}(\Omega)$, it holds that 
\begin{equation}\label{e:el4}
\int_{\R^n} \!\!\left\langle \nabla \varphi, \nabla \left(\frac{1}{p} h_{K_{Q,p,f}}^p\right)(\nabla f) \right\rangle dx + d^{nm}_{n,p}(Q) (\Lq)^{-nm} \|f\|_{L^q(\Omega)}^{-nm-q} \!\!\int_{\R^n}\varphi(x) |f|^{q-2}fdx  = 0.
\end{equation}

Suppose that $f \in C^2(\overline{\Omega})$ is a minimizer. Integration by parts then yields 
\[
\frac{\partial}{\partial \varphi}\left[\int_{\S}\|h_Q(\theta^t \nabla f_\e(\cdot))\|_{L^p(\R^n)}^{-nm} d \theta \right] = nm \int_{\R^n} \varphi(x) \Delta_{p,K_{Q,p,f}}f(x) dx. 
\]
Combining this observation with our weak solution \eqref{e:el4}, we get the equation 
\[
-\Delta_{p,K_{Q,p,f}}f(x) + d_{n,p}(Q)^{nm} (\Lq)^{-nm} \|f\|_{L^q(\Omega)}^{-nm-q} |f(x)|^{q-2}f(x) =0.
\]
We now replace $K_{Q,p,f}$ with $L_{Q,p,f}$. Observe that 
\[L_{Q,p,f} = d_{n,p}(Q)^{-\frac{nm}{p}}\left(\Ep f\right)^{\frac{nm}{p}+1}K_{Q,p,f}.\]
Using the fact that $\Delta_{p,(aK)}f = a^p\Delta_{p,K} f$ for all $a>0$ and $K$, the above equation becomes
\[
-\D f(x) + (\Lq)^p \|f\|_{L^q(\Omega)}^{p-q}|f(x)|^{q-2}f(x) = 0 \quad \text{ in } \Omega, 
\]
where we also used the identities $\D f = \Delta_{p,L_{Q,p,f}}f$ and $\Lq\|f\|_{L^q(\Omega)} = \Ep f$.

Conversely, suppose that $f \in C^2(\overline{\Omega})$ is a weak solution to \eqref{e:weak}. Considering $f$ itself as a test function, we first observe that 
\[
(\Lq)^p\|f\|_{L^q(\Omega)}^{p-q} \int_{\R^n} |f(x)|^{q-2}(f(x))^2 dx = (\Lq)^p \|f\|_{L^q(\Omega)}^p . 
\]
Secondly, by \eqref{eq:K_body_support}, Fubini's theorem, the fact that $h_Q(\theta^tz) = h_{\theta\cdot Q}(z)$ (where $\theta\cdot Q$, the matrix multiplication of $\theta$ and $Q$, is a convex body in $\R^n$), and \eqref{eq:support_homo}, we have 
\begin{align*}
&\left(d_{n,p}(Q)^{nm}\left(\Ep f\right)^{-nm-p}\right)\int_{\R^n} \left\langle \nabla f(x), \nabla \left(\frac{1}{p}h_{L_{Q,p,f}}^p \right) (\nabla f(x))\right \rangle dx\\
&=\int_{\R^n} \left\langle \nabla f(x), \nabla \left(\frac{1}{p}h_{K_{Q,p,f}}^p \right) (\nabla f(x))\right \rangle dx
\\
&= \int_{\S}\Lp^{-nm-p}\int_{\R^n}h_Q(\theta^t\nabla f(x))^{p-1} \left\langle \nabla f(x), \nabla_{z=\nabla f(x)} h_Q(\theta^tz) \right\rangle dx d\theta\\
&=\int_{\S}\Lp^{-nm-p}\int_{\R^n}h_Q(\theta^t\nabla f(x))^{p-1} \left\langle \nabla f(x), \nabla_{z=\nabla f(x)} h_{\theta \cdot Q}(z) \right\rangle dx d\theta\\
&=\int_{\S}\Lp^{-nm-p}\int_{\R^n}h_Q(\theta^t\nabla f(x))^{p-1} h_{\theta \cdot Q}(\nabla f(x)) dx d\theta\\
&=\int_{\S}\Lp^{-nm-p}\int_{\R^n}h_Q(\theta^t\nabla f(x))^{p} dx d\theta\\
&=  \int_{\S}\Lp^{-nm} d\theta = d_{n,p}(Q)^{nm}(\Ep f)^{-nm}.
\end{align*}
This rewrites as
\[
\int_{\R^n} \left\langle \nabla f(x), \nabla \left(\frac{1}{p}h_{L_{Q,p,f}}^p \right) (\nabla f(x))\right \rangle dx = (\Ep f)^p.
\]
Combining the two, $f$ satisfies
\[
\Lq \|f\|_{L^q(\Omega)} =\Ep f,
\]
i.e. $f$ is a minimizer. This completes the proof. 
\end{proof}

Next, we show the \textit{existence} of functions obtaining the infima appearing in \eqref{e:poincareconst} and \eqref{e:poincareconst_q}. We handle them separately because, when $q\neq p$, we require $1<p<n$ due to the use of a Sobolev inequality. However, if $q=p$, then we may allow all $p\geq 1$.

\begin{theorem}\label{t:existence}
 Let $\Omega \subset \R^n$ be a bounded, open set, $Q\in\mathcal{C}^{n,m}$, and $p \geq 1$. Then:
 \begin{enumerate}
     \item the infimum defining $\L$ is attained for some $f_p \in W_0^{1,p}(\Omega)$ for $p >1$ and for some $f_1 \in \operatorname{BV}(\Omega)$ for $p =1$.
     \item if $1<p<n$, then, for every $q\in [1,p]$, the infimum defining $\Lq$ is attained for some $f_{p,q} \in W_0^{1,p}(\Omega)$. 
 \end{enumerate}
\end{theorem}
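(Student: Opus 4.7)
The plan is a direct method of calculus of variations, carried out uniformly in parts (1) and (2). Take a minimizing sequence $\{f_k\} \subset W^{1,p}_0(\Omega)$ (replaced by $\operatorname{BV}(\Omega)$ when $p=1$) for $\Rqq$, with $q=p$ in part (1). Since both $\Ep$ and $\|\cdot\|_{L^q(\Omega)}$ are $1$-homogeneous, we may normalize $\|f_k\|_{L^q(\Omega)}=1$, so that $\Ep f_k \to \Lq$, and in particular $\Ep f_k$ is bounded.

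For compactness I would combine Lemma~\ref{l:useful}(ii),
\[
\Ep f_k \geq d_0\, \|f_k\|_{L^p(\Omega)}^{(nm-1)/nm}\,\||\nabla f_k|\|_{L^p(\Omega)}^{1/nm},
\]
with the H\"older bound $\|f_k\|_{L^p(\Omega)} \geq |\Omega|^{1/p - 1/q}\|f_k\|_{L^q(\Omega)} = |\Omega|^{1/p - 1/q}$, which uses $q \leq p$ and $|\Omega|<\infty$. Solving for $\||\nabla f_k|\|_{L^p(\Omega)}$ yields a uniform upper bound. Hence $\{f_k\}$ is bounded in $W^{1,p}_0(\Omega)$ when $p>1$; for $p=1$, extending Lemma~\ref{l:useful}(ii) to BV by approximation (compatible with the BV definition of $\mathcal{E}_{Q,1}$ in terms of $\eta_f$ and $|Df|$) shows that $\{f_k\}$ is bounded in $\operatorname{BV}(\Omega)$.

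By Rellich--Kondrachov for $p > 1$ (respectively the compact embedding $\operatorname{BV}(\Omega) \hookrightarrow L^1(\Omega)$ for $p=1$), passing to a subsequence yields a limit $f_*$ in $W_0^{1,p}(\Omega)$ (resp. $\operatorname{BV}(\Omega)$) with $f_k \rightharpoonup f_*$ weakly and $f_k \to f_*$ strongly in $L^q(\Omega)$. The strong convergence forces $\|f_*\|_{L^q(\Omega)}=1$, so $f_*\not\equiv 0$. For $p>1$, Lemma~\ref{l:lsc} gives $\Ep f_* \leq \liminf_k \Ep f_k = \Lq$, whence $\Rqq f_* \leq \Lq$; the reverse inequality is the definition of the infimum, so $f_*$ is the desired minimizer. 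This completes part (2), and part (1) when $p>1$.

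The main obstacle is the $p=1$ case in part (1), because Lemma~\ref{l:lsc} is stated only for $p>1$. I would bridge the gap by noting that, for each fixed $\theta \in \S$, the map $f \mapsto \int_{\R^n}h_Q(\theta^t\eta_f)\,d|Df|$ is the integral of a convex, positively $1$-homogeneous integrand against the vector-valued measure $Df$, and is therefore lower semicontinuous under $L^1$-convergence in $\operatorname{BV}(\Omega)$ by a standard Reshetnyak-type argument. Applying Fatou's lemma to the outer $\theta$-integral in \eqref{eq:m_lp_affine_sobolev} then yields lower semicontinuity of $\mathcal{E}_{Q,1}$ along our sequence, and the remainder of the argument proceeds as in the $p>1$ case.
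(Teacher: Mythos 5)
Your proposal follows essentially the same direct-method strategy as the paper: normalize the minimizing sequence in $L^q$, obtain coercivity from Lemma~\ref{l:useful}(ii), extract a weak $W^{1,p}_0$ limit with strong $L^q$ convergence via Rellich--Kondrachov, and conclude by lower semicontinuity of $\Ep$. For $p>1$ your appeal to Lemma~\ref{l:lsc} is legitimate (the paper instead re-derives the semicontinuity from weak $L^p$ lower semicontinuity of $\theta\mapsto\|h_Q(\theta^t\nabla f_i(\cdot))\|_{L^p(\R^n)}$, the uniform lower bound of Lemma~\ref{l:useful}(i), and a Fatou argument). Your treatment of part (2) is in fact a slight simplification: the paper's Jensen--Sobolev step, which is where the restriction $p<n$ enters, only serves to bound $\Ep f_i$ from below (hence to show $\Lq>0$); attainment itself follows from the strong $L^q$ convergence preserving the normalization, exactly as you argue.

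The one step that does not work as written is the semicontinuity for $p=1$. After the Reshetnyak argument gives $\liminf_k\int_{\R^n} h_Q(\theta^t\eta_{f_k})\,d|Df_k|\geq\int_{\R^n} h_Q(\theta^t\eta_{f_*})\,d|Df_*|$ for each fixed $\theta$, what must be passed to the limit in \eqref{eq:m_lp_affine_sobolev} is the $\theta$-integral of the \emph{negative} power of these quantities: one needs
\[
\int_{\S}\Bigl(\int_{\R^n} h_Q(\theta^t\eta_{f_*})\,d|Df_*|\Bigr)^{-nm}d\theta \;\geq\; \limsup_{k\to\infty}\int_{\S}\Bigl(\int_{\R^n} h_Q(\theta^t\eta_{f_k})\,d|Df_k|\Bigr)^{-nm}d\theta,
\]
and plain Fatou gives an inequality in the opposite direction. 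You need the reverse (dominated) form of Fatou's lemma, and the domination is supplied precisely by Lemma~\ref{l:useful}(i), extended to $\operatorname{BV}$ by approximation just as you do for item (ii): with the normalization $\|f_k\|_{L^1(\Omega)}=1$ one gets $\int_{\R^n} h_Q(\theta^t\eta_{f_k})\,d|Df_k|\geq c_0>0$ uniformly in $k$ and $\theta$, so the integrands above are bounded by the constant $c_0^{-nm}$ on the finite-measure sphere $\S$. With that one observation added, your $p=1$ argument closes; this is exactly the mechanism the paper itself uses (uniform lower bound plus Fatou) in its $p>1$ semicontinuity step.
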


\begin{proof} Consider a sequence $(f_j) \subset W_0^{1,p}(\Omega)$ such that $\|f_i\|_{L^q(\Omega)} =1$ for all $i$ and $\Ep f_i \to \Lq$ as $i \to \infty$. By Lemma~\ref{l:useful}(ii), the sequence $(f_i)$ is bounded in $W_0^{1,p}(\Omega)$. 

We now specialize to the first claim, in which case $q=p \geq 1$, and our sequences satisfy $\|f_i\|_{L^p(\Omega)} =1$ for all $i$ and $(\Ep f_i)^p \to \L$ as $i \to \infty$. We will assume that $p >1$ (the proof of the theorem for  $p=1$ is similar). By reflexivity and the Rellich-Kondarchov embedding theorem (by passing to a subsequence if necessary), there must be a $f \in W_{0}^{1,p}(\Omega)$ such that $(f_i)$ converges to $f$ weakly in $W_0^{1,p}(\Omega)$ and $\|f_i - f\|_{L^p(\Omega)} \to 0$ as $i \to \infty$. We already know that $\L\leq \Rq f$; we must show that 
\begin{equation}
    \Rq f \leq \liminf_{i\to\infty}\Rq f_i = \L.
    \label{eq:upper_1}
\end{equation}
We post-pone this for now and move onto the second case. 

 As for the second claim, we first show that $\Ep f_i$ is uniformly bounded from below. Indeed, by applying Jensen's inequality, we have that $\|f_i\|_{L^p(\Omega)} \geq c\|f_i\|_{L^q(\Omega)}$ for some $c=c(\Omega,n,p,q)$. Also, there exists \cite[Theorem 3 on pg. 265]{Evans} a constant $k(\Omega,n,p,q)$ so that the following Sobolev-type inequality holds
 \[
 1 = \|f_i\|_{L^q(\Omega)} \leq k(\Omega,n,p,q)\||\nabla f_i|\|_{L^p(\Omega)}.
 \]
 Therefore, we can continue Lemma~\ref{l:useful}(ii) and obtain
  \[
    \Rqq f_i = \Ep f_i \geq d_o \cdot c^{\left(1-\frac{1}{nm}\right)} \cdot k(\Omega,n,p,q)^\frac{1}{nm}.
    \]

    From the same lemma, we have 
    \[
    \||\nabla f_i|\|_{L^p(\Omega)} \leq \left(\frac{1}{d_0}\Ep f_i\right)^{nm}.
    \]
    But, for $i$ large enough, we can take $\Ep f_i \leq \Lq+1$.
    Consequently, there exists a function $f\in W^{1.p}_0(\Omega)$ such that, by passing to a subsequence if need be, we have $f_{i}\to f$ weakly in $W^{1.p}_0(\Omega)$ and $f_{i} \to f$ in $L^q(\Omega).$ Furthermore, our function $f$ enjoys the following properties:
    \[
    \begin{cases}
        k(\Omega,n,p,q)^{-1} \leq \||\nabla f|\|_{L^p(\Omega)} \leq \liminf_{i\to \infty} \||\nabla f_i|\|_{L^p(\Omega)} \leq \left(\frac{1}{d_0}\left(\Lq+1\right)\right)^{nm};
        \\
        f \neq 0;
        \\
        \Lq f\leq \Rqq f.
    \end{cases}
    \]
    We now show 
    \begin{equation}
    \label{eq:upper_2}
     \Rqq f \leq \liminf_{i\to\infty}\Rqq f_i = \Lq ,
    \end{equation}
    which, in turn, yields our claim. 

    The following argument yields both \eqref{eq:upper_1} and \eqref{eq:upper_2}. For each $\theta \in \S$, we notice that $h_Q(\theta^t \nabla f_i(\cdot))$ converges to $h_Q(\theta^t \nabla f(\cdot))$ weakly in $L^p$. It follows that 
\[
\liminf_{i \to \infty} \|h_Q(\theta^t \nabla f_i(\cdot))\|_{L^p(\R^n)} \geq \|h_Q(\theta^t \nabla f(\cdot))\|_{L^p(\R^n)}
\]
for every $\theta \in \S$. Applying Lemma~\ref{l:useful}(i), there is a $c_0 >0$ such that 
\[
\|h_Q(\theta^t \nabla f_i(\cdot))\|_{L^p(\R^n)} \geq c_0 \|f_i\|_{L^p(\Omega)} >0
\]
for every $i$ and $\theta \in \S$. Therefore, by Fatou's lemma, we obtain
\[
\int_{\S} \|h_Q(\theta^t \nabla f(\cdot))\|_{L^p(\R^n)}^{-nm} d\theta \geq \limsup_{k \to \infty} \int_{\S} \|h_Q(\theta^t \nabla f_i(\cdot))\|_{L^p(\R^n)}^{-nm} d\theta.
\]
The theorem follows as a result. 
\end{proof}

\subsection{The main regularity theorem}
\label{sec:regality}
 In this subsection, we prove Theorem~\ref{t:operatoranalysis}. The proof is rather involved and requires several preliminary results. First, we specialize Proposition~\ref{p:eleq_q}.

\begin{proposition}
    \label{p:eleq}
    Let $\Omega \subset \R^n$ be a bounded, open set, $Q\in\mathcal{C}^{n,m}$, and $p >1$. Suppose that $f \in W_0^{1,p}(\Omega)$ is a minimizer associated to $\L$. Then, $f$ satisfies 
\begin{equation}
\label{eq:weak_PDE_solved}
\int_\Omega \langle h_{L_{Q,p,f}}(\nabla f)^{p-1} \nabla h_{L_{Q,p,f}}(\nabla f), \nabla \varphi \rangle dx = \L \int_\Omega |f|^{p-2} f\varphi dx
\end{equation}
for every $\varphi \in W^{1,p}_0(\Omega)$. In other words, $f$ is a weak solution to the Dirichlet-type problem 
\begin{equation}
\label{eq:THE_PDE}
\begin{cases}
\D f = \L |f|^{p-2} f, &\text{in } \Omega, \\
f= 0 &\text{on } \partial \Omega.
\end{cases}
\end{equation}
Moreover, $f$ is an eigenfunction  of $\D$ in $W_0^{1,p}(\Omega)$ corresponding to its first eigenvalue $\L$.
\end{proposition}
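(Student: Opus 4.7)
The plan is to obtain this proposition as the $q=p$ special case of Proposition~\ref{p:eleq_q}, and then to spend the extra effort verifying separately that $\L$ is the smallest Dirichlet eigenvalue. Since the discussion preceding the statement records that $\lambda_{1,p}^{\mathcal{A},p}(Q,\Omega) = (\L)^{1/p}$, any minimizer $f$ of $\Rq$ is equivalently a minimizer of $\mathcal{R}^{\mathcal{A}}_{Q,p,p}$, so Proposition~\ref{p:eleq_q} applies with $q=p$. Specializing the weak formulation furnished there, the factor $\|f\|_{L^p(\Omega)}^{p-q}$ collapses to $1$, the constant $(\Lq)^p$ becomes $\L$, and $|f|^{q-2}f$ becomes $|f|^{p-2}f$. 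This delivers \eqref{eq:weak_PDE_solved} directly, and hence the Dirichlet-type boundary value problem \eqref{eq:THE_PDE}. In view of the distributional reading of $\D$ recorded just after Theorem~\ref{t:operatoranalysis}, the left-hand side of \eqref{eq:weak_PDE_solved} is precisely $\langle \D f, \varphi\rangle$, so the identity says $\D f = \L|f|^{p-2}f$ in the sense of \eqref{e:weak}, and $(\L, f)$ is an eigenpair.

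To upgrade the conclusion to the statement that $\L$ is the \emph{first} eigenvalue, I would take any other eigenpair $(\mu, g) \in \R \times (W_0^{1,p}(\Omega) \setminus \{0\})$, i.e.\ $\D g = \mu |g|^{p-2} g$ weakly, and test against $\varphi = g$ itself. The right-hand side of the resulting identity evaluates to $\mu \|g\|_{L^p(\Omega)}^p$. To handle the left-hand side, I would invoke the self-consistent identity established at the end of the proof of Proposition~\ref{p:eleq_q}, namely
\[
\int_{\R^n}\left\langle \nabla g,\, \nabla\!\left(\tfrac{1}{p}h_{L_{Q,p,g}}^p\right)\!(\nabla g)\right\rangle dx = (\Ep g)^p,
\]
which collapses the left-hand side to $(\Ep g)^p$. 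Consequently $\mu = (\Ep g)^p/\|g\|_{L^p(\Omega)}^p = \Rq g \geq \L$, proving minimality.

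The main subtlety, and what I expect to be the only genuinely non-trivial step, is the self-consistent identity used above. It is \emph{not} the standard $p$-homogeneity of a fixed Wulff $p$-Laplacian: the body $L_{Q,p,g}$ itself depends on $g$ through \eqref{eq:L_body} and \eqref{eq:better_energy}, and the calculation only collapses because this dependence is designed so that Fubini together with Euler's $1$-homogeneity identity $h_{\theta\cdot Q}(z) = \langle \nabla h_{\theta\cdot Q}(z), z\rangle$ eliminate exactly the right factors. Since the same computation was already carried out in the proof of Proposition~\ref{p:eleq_q} for general $f \in W_0^{1,p}(\Omega)$ without using the minimizer property, one may simply re-run it with the trial function $g$ and invoke it as a black box in the present proof.
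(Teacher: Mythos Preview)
Your proposal is correct and follows essentially the paper's own route. The paper does not give a separate proof of Proposition~\ref{p:eleq}: it is introduced with the sentence ``First, we specialize Proposition~\ref{p:eleq_q},'' so the weak formulation \eqref{eq:weak_PDE_solved} is obtained exactly as you describe, by setting $q=p$ and collapsing $\|f\|_{L^q}^{p-q}$ to $1$ and $(\Lq)^p$ to $\L$. For the ``first eigenvalue'' assertion, the paper defers to the proof of Theorem~\ref{t:operatoranalysis}, where it is obtained from the converse direction in the proof of Proposition~\ref{p:eleq_q}; your argument---test an arbitrary eigenpair $(\mu,g)$ against $\varphi=g$ and invoke the self-consistent identity $\int \langle \nabla g, \nabla(\tfrac{1}{p}h_{L_{Q,p,g}}^p)(\nabla g)\rangle\,dx = (\Ep g)^p$ to deduce $\mu=\Rq g \geq \L$---is precisely that converse computation, made explicit and applied to a general eigenvalue rather than to $\L$ itself. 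One small remark: the paper states that converse computation under the hypothesis $f\in C^2(\overline{\Omega})$, but as you correctly observe, the identity uses only Fubini and Euler's $1$-homogeneity relation \eqref{eq:support_homo}, so it is valid for any $g\in W_0^{1,p}(\Omega)$ and your invocation of it as a black box is justified.
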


The next result concerns smoothness of minimizers.

\begin{proposition}\label{p:regularity2} Let $\Omega \subset \R^n$ be a bounded, open set, $Q\in\mathcal{C}^{n,m}$, and $p >1$. Let $f \in W_0^{1,p}(\Omega)$ be a weak solution to the problem 
\begin{equation}\label{e:prob}
\begin{split}
\begin{cases}
\D f = w(x) |f|^{p-2}f &\text{in } \Omega,\\
f =0 &\text{on } \partial \Omega,
\end{cases}
\end{split}
\end{equation}
 where $w \colon \Omega \to \R$ is a weight function. If $w \in  L^{\frac{n}{p}}(\Omega)$, then $f\in L^k(\Omega)$ for every $k \geq 1$.    
\end{proposition}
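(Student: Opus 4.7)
The plan is to run a Moser iteration, leveraging the uniform ellipticity of the operator $\D$ established in the proof of Proposition~\ref{p:regularity1}. From the first Tolksdorff-type verification performed there, one extracts a constant $c_f>0$, depending on $f$ but finite because $f\not\equiv 0$, such that $h_{L_{Q,p,f}}(z) \geq c_f|z|$ for all $z \in \R^n$. The case $p \geq n$ is immediate from the Sobolev embedding $W_0^{1,p}(\Omega)\hookrightarrow L^k(\Omega)$ for every $k\geq 1$; hence, I assume $1 < p < n$ and set $p^* = np/(n-p)$.

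For $\alpha \geq 1$ and a truncation parameter $M>0$, I would insert the test function $\varphi = f\,(|f|\wedge M)^{(\alpha-1)p}$, admissible in $W_0^{1,p}(\Omega)$, into the weak formulation of \eqref{e:prob}. A chain-rule computation, together with Euler's identity $\langle z,\nabla h_{L_{Q,p,f}}(z)\rangle = h_{L_{Q,p,f}}(z)$ (cf.~\eqref{eq:support_homo}), the ellipticity bound above, and the classical Sobolev inequality applied to $(|f|\wedge M)^\alpha$, produces a lower bound for the left-hand side of the form
\[
c_f^p\,C_S^{-1}\,(1+(\alpha-1)p)\alpha^{-p}\|(|f|\wedge M)\|_{L^{\alpha p^*}(\Omega)}^{\alpha p},
\]
where $C_S$ is the classical Sobolev constant.

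For the right-hand side, I would invoke the standard small-$L^{n/p}$ splitting of the weight: for $K>0$,
\[
\int_\Omega |w||f|^{\alpha p}\,dx \leq K\int_\Omega |f|^{\alpha p}\,dx + \|w\chi_{\{|w|>K\}}\|_{L^{n/p}(\Omega)}\,\|f\|_{L^{\alpha p^*}(\Omega)}^{\alpha p}.
\]
Since $w \in L^{n/p}(\Omega)$, absolute continuity of the Lebesgue integral lets me choose $K=K(\alpha)$ large enough that the second term is dominated by half of the Sobolev lower bound above. After letting $M \to \infty$ via monotone convergence (valid provided $f \in L^{\alpha p}(\Omega)$, which is the inductive hypothesis), I absorb and reach $\|f\|_{L^{\alpha p^*}(\Omega)} \leq C(\alpha)^{1/(\alpha p)}\|f\|_{L^{\alpha p}(\Omega)}$. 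Starting from $\alpha_0=1$, where $f \in L^{p^*}(\Omega)$ by the Sobolev embedding $W_0^{1,p}(\Omega)\hookrightarrow L^{p^*}(\Omega)$, I iterate with $\alpha_{j+1} = \alpha_j\cdot p^*/p$; since $p^*/p > 1$, the exponents $\alpha_j p$ tend to infinity, and conclude $f \in L^k(\Omega)$ for every $k \geq 1$.

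The main technical obstacle is the absorption bookkeeping: one must verify that the truncated test function is admissible at every stage (which follows by induction on the hypothesis $f \in L^{\alpha p}$), that $K(\alpha)$ can be chosen so that the iterative constants $C(\alpha)$ do not blow up too fast, and that the monotone convergence in $M$ is justified at each stage. None of these steps is deep, but together they demand careful bookkeeping in the spirit of Moser's original argument, and they are the true content of the proof beyond the already-verified ellipticity of $\D$.
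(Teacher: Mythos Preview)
Your proposal is correct and essentially reproduces, from scratch, the Moser iteration that underlies the black-box result the paper invokes. The paper's own proof is a two-line argument: it observes that for fixed $f$ the operator $\D$ coincides (up to a positive constant) with the quasilinear operator $\mathcal{L}$ whose Tolksdorff structure conditions were already checked in Proposition~\ref{p:regularity1}, and then cites \cite[Proposition~1.2]{GV88} directly for the conclusion $f\in L^k(\Omega)$ for all $k\geq 1$. Your route unpacks that citation: you extract the coercivity $h_{L_{Q,p,f}}(z)\geq c_f|z|$ (which does follow from the first Tolksdorff bound via Euler's identity, or more simply from $L_{Q,p,f}\in\conbodio[n]$), plug in the truncated power test function, split the weight $w$ at a threshold to absorb the critical term, and iterate the exponent by the factor $p^*/p$. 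This is exactly the Guedda--V\'eron argument being cited. The trade-off is clear: the paper's proof is shorter and modular, leaning on existing regularity theory for degenerate quasilinear operators; yours is self-contained and makes explicit where the dependence on $f$ (through $c_f$) enters, at the cost of the bookkeeping you yourself flag.
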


\begin{proof} Consider the quasilinear elliptic operator $\mathcal{L}$ from the proof of Proposition~\ref{p:regularity1} acting on $W_0^{1,p}(\Omega)$ for a fixed $f$. Then we can rewrite \eqref{e:prob} as 
\[
\begin{cases}
\mathcal{L} f = w(x) |f|^{p-2}f &\text{in } \Omega,\\
f =0 &\text{on } \partial \Omega,
\end{cases}
\]
We already know that $\mathcal{L}$ satisfies Lemma~\ref{l:Tolksdorff}, so the result follows from \cite[Proposition~1.2]{GV88}.
\end{proof}

We are now in a position to prove Theorem~\ref{t:operatoranalysis}.

\begin{proof}[Proof of Theorem~\ref{t:operatoranalysis}] First, according to Proposition~\ref{p:eleq}, a function $f_p \in W_0^{1,p}(\Omega)$ minimizes $\L$ if and only if $f_p$ is a weak solution to the problem 
\[
\begin{cases}
\D f = \L |f|^{p-2}f &\text{in } \Omega,\\
f =0 &\text{on } \partial \Omega.
\end{cases}
\]
This immediately implies that $\L$ is the smallest among the eigenvalues of the operator $\D$. 

Next, notice that $f_p$ is a weak solution of the type of PDE that appears in Proposition~\ref{p:regularity2}, and therefore, by Proposition~\ref{p:regularity2}, it follows that $f_p \in \bigcap_{k \geq 1} L^k(\Omega)$. Set $h_0 = \L |f_p|^{p-2} f_p$ in Proposition~\ref{p:regularity1}. Then, we find that $f_p$ is a bounded function that belongs to $C^{1,\alpha}(\Omega)$, and to $C^{1,\alpha}(\overline{\Omega})$ if $\partial \Omega$ is of class $C^{2,\alpha}$. 

Since $\Rq(f) = \Rq(|f|)$ for all $f \in W^{1,p}_0(\Omega)$, we know that $g = |f_p|$ belongs to $W_0^{1,p}(\Omega)$, and is also a minimizer (or eigenfunction) of $\L$. Consequently $g \geq 0$ belongs to $C^{1,\alpha}(\Omega)$ and satisfies $\mathcal{L} g = \L \geq 0$ in the weak sense, where $\mathcal{L}$ is the operator we have considered before. Applying the strong maximum principle for $C^1$ super-solutions to quasilinear eilliptic equations involving $\mathcal{L}$, we finish the proofs of items (i) and (ii). 

\end{proof}

\section{Inequalities for the first eigenvalue}

\subsection{The $m$th-order affine $L^p$ Talenti inequality}
\label{s:talenti}

This section is dedicated to proving the $m$th-order affine $L^p$ Talenti inequality, Theorem~\ref{t:talenti}.

Throughout this section, we fix $p \in (1,n)$ and a bounded, open set $\Omega\subset \R^n$. We let $f_p$ be a minimizer of $\Rq$, or, alternatively by Proposition~\ref{p:eleq}, a solution to \eqref{eq:THE_PDE}, which exists by Theorem~\ref{t:existence}. We set $\mu^\mathcal{A}_p(t):=\mu_{f_p}(t)$ to be the distribution function of $f_p$ over $\Omega$. It is also beneficial to set $p^\prime=\frac{p}{p-1}$, the H\"older conjugate of $p$, and similarly for $n$. With this notation, we are setting out to show
 \begin{equation}
    \left(n\omega_n^\frac{1}{n}(\mu_p(t))^{n^\prime}\right)^{p^\prime} \leq - \mu_p^\prime(t)\left(\L\left(\frac{\mu_p(t)}{t^{1-p}}+(p-1)\int_t^{+\infty}\frac{\mu_p(\tau)}{\tau^{2-p}}d\tau\right)\right)^\frac{p^\prime}{p}.
    \label{eq:Talenti_mth_again}
\end{equation}

We will again make use of the body $L_{Q,p,f}$ defined in \eqref{eq:L_body}, whose support function is given by \eqref{eq:L_body_supp}. By Jensen's inequality, we have that

\begin{align*}
& \left(\frac{\int_{\{t<f_p(x)<t+h\}}\left(h_{L_{Q,p,f}}\left(\nabla f_p(x)\right)\right)^p d x}{\int_{\{t<f_p(x)<t+h\}} h_{L_{Q,p,f}}\left(\nabla f_p(x)\right) d x}\right)^{-\frac{1}{p-1}} \\
&\quad=\left(\frac{\int_{\{t<f_p(x)<t+h\}}\left(h_{L_{Q,p,f}}\left(\nabla f_p(x)\right)\right)^{p-1}\left(h_{L_{Q,p,f}}\left(\nabla f_p(x)\right)\right) d x}{\int_{\{t<f_p(x)<t+h\}} h_{L_{Q,p,f}}\left(\nabla f_p(x)\right) d x}\right)^{-\frac{1}{p-1}} \\
& \quad \leq \frac{\int_{\{t<f_p(x)<t+h\}}\left(h_{L_{Q,p,f}}\left(\nabla f_p(x)\right)\right)^{-1} h_{L_{Q,p,f}}\left(\nabla f_p(x)\right) d x}{\int_{\{t<f_p(x)<t+h\}} h_{L_{Q,p,f}}\left(\nabla f_p(x)\right) d x} \\
& \quad=\frac{\int_{\{t<f_p(x)<t+h\}} d x}{\int_{\{t<f_p(x)<t+h\}} h_{L_{Q,p,f}}\left(\nabla f_p(x)\right) d x}.
\end{align*}

We obtain from this inequality the following bounds on the difference quotient of $\mu_p(t)$:
\begin{equation}
\label{eq:difference_quotient_bound}
\begin{split}
    &\left(\frac{\int_{\{t<f_p(x)<t+h\}}h_{L_{Q,p,f}}\left(\nabla f_p(x)\right) d x}{h}\right)^{p^\prime} 
    \\
    &\leq -\frac{\mu_p(t+h)-\mu_p(t)}{h}\left(\frac{\int_{\{t<f_p(x)<t+h\}}h_{L_{Q,p,f}}\left(\nabla f_p(x)\right)^p d x}{h}\right)^\frac{p^\prime}{p}.
    \end{split}
\end{equation}
By sending $h\to 0^+$ in \eqref{eq:difference_quotient_bound}, we obtain, introducing the notation $\Omega(t)=\Omega(t,f_p)$,
\begin{equation}
\label{eq:difference_quotients}
\begin{split}
    \left(-\frac{d}{dt}\int_{\Omega(t)}h_{L_{Q,p,f}}\left(\nabla f_p(x)\right) d x\right)^{p^\prime} 
    \leq -\mu^\prime_p(t)\left(-\frac{d}{dt}\int_{\Omega(t)}h_{L_{Q,p,f}}\left(\nabla f_p(x)\right)^p d x\right)^\frac{p^\prime}{p}.
    \end{split}
\end{equation}
By the coarea formula \eqref{eq:coarea}, we have 
\[
\int_{\Omega(t)}h_{L_{Q,p,f}}\left(\nabla f_p(x)\right) d x = \int_t^\infty P_{L_{Q,p,f}}(\Omega(s),\R^n) ds.
\]
Then, by the isoperimetric inequality \eqref{eq:min}, we obtain the inequality
\[
-\frac{d}{dt}\int_{\Omega(t)}h_{L_{Q,p,f}}\left(\nabla f_p(x)\right) d x = P_{L_{Q,p,f}}(\Omega(t),\R^n) \geq n\vol(L_{Q,p,f})^\frac{1}{n}\left(\mu_p(t)\right)^{n^\prime}.
\]
We then use the $m$th-order Busemann-Petty centroid inequality \eqref{eq:BPcLQ} to obtain that
\[
\vol(L_{Q,p,f}) \geq \omega_n,
\]
with equality when $f$ is radial. 

Inserting these computations into \eqref{eq:difference_quotients} yields the inequality
\begin{equation}
\label{eq:left_done}
    (n\omega_n^\frac{1}{n}\left(\mu_p(t)\right)^{n^\prime})^{p^\prime} \leq -\mu^\prime_p(t)\left(-\frac{d}{dt}\int_{\Omega(t)}h_{L_{Q,p,f}}\left(\nabla f_p(x)\right)^p d x\right)^\frac{p^\prime}{p}.
\end{equation}
To handle the right-hand side of \eqref{eq:left_done}, we make us of the fact that $f_p$ satisfies the equation \eqref{eq:weak_PDE_solved}. We set in that equation $\varphi = (f_p-t)_+ = \max\{f_p-t,0\}$
to obtain

\begin{equation}
\label{eq:supp_on_Omega)t}
\begin{split}
\int_{\Omega(t)}  h_{L_{Q,p,f}}(\nabla f_p)^{p} dx &=\int_{\Omega(t)}  h_{L_{Q,p,f}}(\nabla f_p)^{p-1} \langle \nabla h_{L_{Q,p,f}}(\nabla f_p), \nabla f_p \rangle dx 
\\
&= \L \int_{\Omega(t)} |f_p|^{p-2} f_p(f_p-t)dx,
\end{split}
\end{equation}
where we used \eqref{eq:support_homo} for the first equality. 

Finally, we claim that 
\begin{equation}
    -\frac{d}{dt}\int_{\Omega(t)} |f_p|^{p-2} f_p(f_p-t)dx \leq \frac{\mu_p(t)}{t^{1-p}}+(p-1)\int_t^{+\infty}\frac{\mu_p(\tau)}{\tau^{2-p}}d\tau.
    \label{eq:final_bound}
\end{equation}

In light of \eqref{eq:supp_on_Omega)t} and \eqref{eq:left_done}, the bound \eqref{eq:final_bound} will establish \eqref{eq:Talenti_mth_again}. We will follow the Euclidean case from \cite{BT99} closely. First, we observe that the integral over $\Omega(t)$ in \eqref{eq:final_bound} is absolutely continuous in $t$. Consequently, by writing
\begin{equation}
\label{eq:quotients_level}
\begin{split}
    &-\left(\frac{\int_{\Omega(t+h)}\left(f_{p}(x)\right)^{p-1}\left(f_{p}(x)-t-h\right) d x-\int_{\Omega(t)}\left(f_{p}(x)\right)^{p-1}\left(f_{p}(x)-t\right) d x}{h}\right)
    \\
    & =\frac{\int_{\Omega(t) \setminus \Omega(t+h)}\left(f_{p}(x)\right)^{p-1}\left(f_{p}(x)-t-h\right) d x}{h}+ \int_{\Omega(t)}\left(f_{p}(x)\right)^{p-1} d x.
    \end{split}
\end{equation}
Sending $h\to 0^+$ in \eqref{eq:quotients_level}, and throwing away the first term in the second line (which is negative), we obtain 
\begin{equation}
\label{eq:deriv_dis_bound}
    -\frac{d}{dt}\int_{\Omega(t)} |f_p|^{p-2} f_p(f_p-t)dx \leq \int_{\Omega(t)}\left(f_{p}(x)\right)^{p-1} d x .
\end{equation}
Finally, \eqref{eq:deriv_dis_bound} and the fact that
\[
\int_{\Omega(t)}\left(f_{p}(x)\right)^{p-1} d x = \frac{\mu_p(t)}{t^{1-p}}+(p-1)\int_t^{+\infty}\frac{\mu_p(\tau)}{\tau^{2-p}}d\tau
\]
from integration by parts yields \eqref{eq:final_bound}. Thus, we have established \eqref{eq:Talenti_mth_again}, i.e. we have proven Theorem~\ref{t:talenti}.

\subsection{Rigidity}
\label{sec:rigid}

This section is dedicated to proving the rigidity theorem, Theorem~\ref{t:rigid}. We follow the steps of \cite[Theorem 7]{HJM21} closely.

We first consider the case when $p>1$. Suppose that $\Omega$ is a Euclidean ball; without loss of generality, we may suppose $\Omega=\text{int}(\B)$. Let $f_p$ be the function where the infimum is attained for $\L$. Then, we may assume that $f_p$ is radial. Indeed, by the P\'olya-Szeg\"o principle, Lemma~\ref{l;Sobolevlemma}(b), $\Ep f_p$ decreases when $f_p$ is replaced by its spherically symmetric rearrangement $f_p^\star$, which implies that $f_p^\star$ is also a minimizer. Taking $f_p$ to be radial, we then know by the same lemma that $\Ep f_p=\||\nabla f_p|\|_{L^p(\Omega)}$. Therefore, 
\[
\L = \frac{(\Ep f_p)^p}{\|f_p\|_{L^p(\Omega)}} =\frac{\||\nabla f_p|\|_{L^p(\Omega)}^p}{\|f_p\|_{L^p(\Omega)}} \geq \lambda_{1,p}(\Omega).
\]
However, we already know the reverse bound from Proposition~\ref{p:relations}, and so $\L = \lambda_{1,p}(\Omega)$. Conversely, suppose we know that $\L = \lambda_{1,p}(\Omega)$. Let $g_p\in W_{1,p}(\Omega)\cap C^{1,\alpha}(\Omega)$ be a positive eigenfunction of the classical $p$-Laplacian $\Delta_p$ corresponding to $\lambda_{1,p}(\Omega)$. Using the classical variational characterization of the eigenvalues of the $p$-Laplacian (like in the proof of Proposition~\ref{p:eleq_q}), it is easy to see that the assumption $\L = \lambda_{1,p}(\Omega)$ yields $\Ep g_p =\||\nabla g_p|\|_{L^p(\Omega)}$. But, this means that $g_p$ is radial on $\R^n$; since $\Omega$ is the support of $g_p$, we deduce that $\Omega$ is a centered Euclidean ball.

We now move onto the case when $p=1$. We will use the fact that $\lambda_{1,1}(\Omega)$ equals the Cheeger constant of $\Omega$, $I_1(\Omega)$ from \eqref{eq:Cheeger}. Let $g_1$ be any function obtaining $\lambda_{1,1}(\Omega)$. Then, by \cite[Theorem 8]{KF03}, it can be assumed that $f$ is the characteristic function of a Cheeger set $K\subseteq \Omega$ of finite perimeter. Using the fact that
\[
\lambda_{1,1}(\Omega) = \frac{\||\nabla f|\|_{L^p(\Omega)}}{\|f\|_{L^1(\Omega)}} \geq \frac{\E f}{\|f\|_{L^1(\Omega)}} \geq \lambda_{1,1}^\mathcal{A}(\Omega),
\]
we must have that $g_1$ is radial, and so $K$ is a ball. 

In the case where $\Omega$ is convex, we recall \cite[Remark 7]{KF03}, which states that the mean curvature of $\partial K$ at the interior points of $\Omega$ equals $\frac{1}{n-1}\lambda_{1,1}(\Omega)$. Letting $r$ be the radius of $K$, we observe that
\[
\frac{1}{n-1}\lambda_{1,1}(\Omega) =\frac{n}{n-1} \frac{\vol[n-1](\partial K)}{n\vol(K)} = \frac{n}{n-1}\frac{1}{r},
\]
which does not equal the mean curvature of $K$, $\frac{1}{r}$. Consequently, there cannot be any points of $\partial K$ that are interior to $\Omega$. Therefore, $\partial K\subseteq \partial \Omega$, completing the claim.

\subsection{The $m$th-order affine Faber-Krahn inequality}
\label{sec:fk}

This section is dedicated to proving the $m$th-order affine Farber-Krahn inequality, Theorem~\ref{t:fk}.

Let $B = B_\Omega$ denote the closed ball centered at the origin such that $\vol(B) = \vol(\Omega)$. Throughout the proof we will make frequent use of Proposition~\ref{p:properties}(c). 

We will begin with the case $p=1$. For $r >0$, we have 
\[
k_r = \frac{\||\nabla \chi_{rB_2^n}|\|_{L^1(\Omega)}}{\|\chi_{rB_2^n}\|_{L^1(\Omega)}} = \frac{\text{vol}_{n-1}(\partial (rB_2^n))}{\vol(rB_2^n)} = \frac{n}{r} = \frac{n \omega_n^{\frac{1}{n}} }{\vol(rB_2^n)^{\frac{1}{n}}}
\]
in the sense of $\operatorname{BV}(rB_2^n)$. Since $\chi_{rB_2^n}$  is rotationally invariant, we can approximate $\chi_{rB_2^n}$ by a sequence $(\phi_i)$ smooth radially symmetric functions such that $\||\nabla \varphi_i|\|_{L^1(\Omega)} \to \||\nabla \chi_{rB_2^n}|\|_{L^1(\Omega)}$. However, since $\mathcal{E}_{Q,1} f = \||\nabla f|\|_{L^1(\Omega)}$ for any reasonably smooth radial function $f$ (see \cite[Theorem 1.2]{LRZ24}), it follows that 
\[
\lim_{i \to \infty} \frac{\mathcal{E}_{Q,1} \varphi_i}{\|\varphi_i\|_{L^1(\Omega)}} = \lim_{i \to \infty} \frac{\||\nabla \varphi_i|\|_{L^1(\Omega)}}{\|\varphi_i\|_{L^1(\Omega)}} = k_r.  
\]
Consequently, we have 
\[
\lambda_{1,1}^{\mathcal{A}}(Q,B) \leq \lambda_{1,1}(B) \leq \frac{n\omega_n^{1/n}}{\vol(B)^{\frac{1}{n}}}. 
\]
Assume that $f_1 \in \operatorname{BV}(\Omega)$ is a minimizer of $\lambda_{1,1}^{\mathcal{A}}(Q,\Omega)$. By H\"older's inequality paired with  $m$th-order affine Sobolev inequality (Lemma~\ref{l;Sobolevlemma}(a)), it follows that 
\begin{equation}\label{e:fk1}
\begin{split}
\lambda_{1,1}^{\mathcal{A}}(Q,\Omega) \geq \frac{\mathcal{E}_{Q,1} f_1}{\|f_1\|_{L^1(\Omega)}} \geq  \frac{\mathcal{E}_{Q,1}f_1}{\|f_1\|_{L^\frac{n}{n-1}(\Omega)} \vol(\Omega)^{\frac{1}{n}}} \geq \frac{n\omega_n^{\frac{1}{n}}}{\vol(\Omega)} = \frac{n\omega_n^{\frac{1}{n}}}{\vol(B)} \geq \lambda_{1,1}^{\mathcal{A}}(Q,B).
\end{split}
\end{equation}

Suppose that $\lambda_{1,1}^{\mathcal{A}}(Q,\Omega) = \lambda_{1,1}^{\mathcal{A}}(Q,B)$. Then there must be equality in \eqref{e:fk1}, which means the equality conditions of the $m$th-order affine Sobolev inequality kick in and force $f_1 = a \chi_{E}$ for some ellipsoid $E \subset \R^n$. Finally, under the equality conditions in H\"older's inequality, we obtain $a =1$ and $\Omega = E$, as desired. 

Now we assume that $p >1$. Suppose that $f_p \in W^{1,p}_0(\Omega)$ is a minimizer of $\L$, and let $f_p^\star \in W_0^{1,p}(B)$ be its spherically symmetric rearrangement. By the $m$th-order P\'olya-Szeg\"o principle (Lemma~\ref{l;Sobolevlemma}(b)), we have 
\begin{equation}\label{e:fk2}
\begin{split}
\L \geq \frac{(\Ep f_p)^p}{\|f_p\|_{L^p(\Omega)}^p} \geq \frac{(\Ep f_p^\star)^p}{\|f_p^\star\|_{L^p(\Omega)}^p} \geq \lambda_{1,p}^{\mathcal{A}}(Q,B).
\end{split}
\end{equation}

If $\L = \lambda_{1,p}^{\mathcal{A}}(Q,B)$, then we obtain equality in \eqref{e:fk2}, forcing $f_p^\star$ to be a minimizer of $\L$, and since $\E f_p^\star = \||\nabla f_p^\star|\|_{L^p(\Omega)}$ and 
\[
\lambda_{1,p}(B) \geq \lambda_{1,p}^{\mathcal{A}}(Q,B) = \frac{(\Ep f_p^\star)^p}{\|f_p^\star\|_{L^p(\Omega)}^p} = \frac{\||\nabla f_p^\star|\|_{L^p(\Omega)}^p}{\|f_p^\star\|_{L^p(\Omega)}^p},
\]
we also have that $f_p^\star$ is a minimizer of $\lambda_{1,p}(B)$. Because $f_p^\star$ is a first positive eigenfunction of the $p$-Laplace operator on $B$, $f^\star$ is strictly radially decreasing. By the equality conditions of the $m$th-order affine P\'olya-Szeg\"o principle, it is then the case that $f(x) = f(|Ax|)$ for some $F \colon \R \to [0,\infty)$ and some invertible linear transformation $A \colon \R^n \to \R^n$. 

Finally, we need only show that $A \Omega$ is a ball in $\R^n$. Without loss of generality, we may assume that $\det(A) = 1$ . Set $g(x) = f_p(A^{-1} x)$. The function $g$ is radial, and so it follows that 
\[
\frac{\||\nabla g|\|_{L^p(\Omega)}^p}{\|g\|_{L^p(\Omega)}^p} = \frac{(\Ep g)^p}{\|g\|_{L^p(\Omega)}^p} = \lambda_{1,p}^{\mathcal{A}}(Q,A\Omega) \leq \lambda_{1,p}(A\Omega). 
\]
Therefore, $g$ is a minimizer of $\lambda_{1,p}(A\Omega)$. Since $f_p^\star$ is a minimizer of $\lambda_{1,p}(B)$, we conclude that 
\[
\lambda_{1,p}(A\Omega) = \frac{(\Ep g)^p}{\|g\|_{L^p(\Omega)}^p} = \frac{(\Ep f_p^\star)^p}{\|f_p^\star\|_{L^p(\Omega)}^p} = \lambda_{1,p}(B). 
\]
This, together with the equality conditions of the classical Farber-Krahn inequality for the $p$-Laplace operator imply that $A\Omega = B$, or in other words, $\Omega = A^{-1} B$ is an ellipsoid, as claimed. 

\subsection{Cheeger sets}
\label{sec:cheeger}
In this section, we prove Theorem~\ref{t:cheeger}. Let $f\in \operatorname{BV}(\Omega)$, and recall that $\Omega(t)=\{x\in\R^n:f(x) \geq t\}$. From the coarea formula for functions of bounded variation, \eqref{eq:coarea_BV}, we have 
\[
\int_{\R^n}h_Q (\theta^t\eta_f(x))d|Df|(x) = \int_0^{\|f\|_{L^\infty}(\R^n)} \|\theta\|_{\Pi^\circ_{Q,1}\Omega(t)}dt.
\]
Therefore, by Minkowski's integral inequality, we have
\begin{align*}
    \E f&= d_{n,p}(Q) \left(\int_{\S}\left(\int_0^\infty \|\theta\|_{\Pi^\circ_{Q,1}\Omega(t)}dt\right)^{-nm}d\theta \right)^{- \frac{1}{nm}}
    \\
    &\geq d_{n,p}(Q) \int_0^\infty \left(\int_{\S}\|\theta\|^{-nm}_{\Pi^\circ_{Q,1}\Omega(t)}d\theta \right)^{- \frac{1}{nm}}dt
    \\
    & =  d_{n,p}(Q)(nm)^{-\frac{1}{nm}}\int_0^{+\infty}\vol[nm](\Pi^\circ_{Q,1} \Omega(t))^{-\frac{1}{nm}}dt
    \\
    & = \int_0^{\|f\|_{L^\infty}(\R^n)} \E\chi_{\Omega(t)} dt.
\end{align*}
Supposing that $f$ is a minimizer of $\lambda_{1,1}^\mathcal{A}(Q,\Omega)$, we have
\begin{align*}
0 &= \E f - \lambda_{1,1}^\mathcal{A}(Q,\Omega)\|f\|_{L^1(\Omega)}
\\
&\geq \int_0^{\|f\|_{L^\infty}(\R^n)}(\E\chi_{\Omega(t)} -\lambda_{1,1}^\mathcal{A}(Q,\Omega)\vol(\Omega(t)))dt \geq 0.
\end{align*}
Consequently, for almost all $t\in (0, {\|f\|_{L^\infty}(\R^n)})$, the function $\chi_{\Omega(t)}$ is a minimizer of $\lambda_{1,1}^\mathcal{A}(Q,\Omega)$. Any of the $\Omega(t)$ satisfies the claim.

For the final claim, we use the known affine invariance for
$\Pi^\circ_{Q,1}$, \cite[Proposition 3.6]{HLPRY25_2}, to obtain, for every $A\in \operatorname{GL}(n)$ and $K\subset\R^n$ a set of finite perimeter, 
\begin{align*}
\mathcal{R}_{Q,1}^\mathcal{A}\chi_{AK} &= \frac{\E \chi_{AK}}{\|\chi_{AK}\|_{L^1(\Omega)}}
=  d_{n,p}(Q)(nm)^{-\frac{1}{nm}}\frac{\vol[nm](\Pi^\circ_{Q,1} AK)^{-\frac{1}{nm}}}{\vol(AK)}
\\
&= |\det(A)|^{-\frac{1}{nm}} d_{n,p}(Q)(nm)^{-\frac{1}{nm}}\frac{\vol[nm](\Pi^\circ_{Q,1} K)^{-\frac{1}{nm}}}{\vol(K)}
\\
& = |\det(A)|^{-\frac{1}{nm}} \mathcal{R}_{Q,1}^\mathcal{A}\chi_{K},
\end{align*}
and then the claim follows, as this shows the Rayleigh quotient decreases when the volume of $K$ is increased.

\bibliography{references.bib}
\bibliographystyle{acm}

\end{document}